\newenvironment{customthm}[1]
  {\innercustomthm}
  {\endinnercustomthm}
\newenvironment{customlem}[1]
  {\innercustomlem}
  {\endinnercustomlem}
\newtheorem*{thm*}{Theorem}
\newtheorem{thm}{Theorem}
\newtheorem{lem}[thm]{Lemma}
\newtheorem{pro}[thm]{Proposition}
\newtheorem{obs}[thm]{Observation}
\newtheorem{cor}[thm]{Corollary}
\newtheorem{ques}[thm]{Question}
\newtheorem{defn}[thm]{Definition}
\newcommand{\HH}{\mathcal{H}}
\newcommand{\II}{\mathcal{I}}
\newcommand{\FF}{\mathcal{F}}
\newcommand{\DD}{\mathcal{D}}
\newcommand{\N}{\mathbb{N}}
\begin{document}

\title{The DP Color Function of Joins and Vertex-Gluings of Graphs}

\author{Jack Becker$^1$, Jade Hewitt$^1$, Hemanshu Kaul$^2$, Michael Maxfield$^1$, Jeffrey A. Mudrock$^1$, \\ David Spivey$^1$, Seth Thomason$^1$, and Tim Wagstrom$^1$ }

\footnotetext[1]{Department of Mathematics, College of Lake County, Grayslake, IL 60030.  E-mail:  {\tt {jmudrock@clcillinois.edu}}}

\footnotetext[2]{Department of Applied Mathematics, Illinois Institute of Technology, Chicago, IL 60616. E-mail: {\tt {kaul@iit.edu}}}

\maketitle

\begin{abstract}
DP-coloring (also called correspondence coloring) is a generalization of list coloring that has been widely studied in recent years after its introduction by Dvo\v{r}\'{a}k and Postle in 2015. As the analogue of the chromatic polynomial $P(G,m)$, the DP color function of a graph $G$, denoted $P_{DP}(G,m)$, counts the minimum number of DP-colorings over all possible $m$-fold covers. Chromatic polynomials for joins and vertex-gluings of graphs are well understood, but the effect of these graph operations on the DP color function is not known. In this paper we make progress on understanding the DP color function of the join of a graph with a complete graph and vertex-gluings of certain graphs. We also develop tools to study the DP color function under these graph operations, and we study the threshold (smallest $m$) beyond which the DP color function of a graph constructed with these operations equals its chromatic polynomial.
\medskip

\noindent {\bf Keywords.}  graph coloring, list coloring, DP-coloring, correspondence coloring, chromatic polynomial, list color function, DP-color function.

\noindent \textbf{Mathematics Subject Classification.} 05C15, 05C30, 05C69

\end{abstract}

\section{Introduction}\label{intro}

In this paper all graphs are nonempty, finite, simple graphs unless otherwise noted.  Generally speaking we follow West~\cite{W01} for terminology and notation.  The set of natural numbers is $\N = \{1,2,3, \ldots \}$.  For $m \in \N$, we write $[m]$ for the set $\{1, \ldots, m \}$.  Given a set $A$, $\mathcal{P}(A)$ is the power set of $A$.  We write \emph{AM-GM Inequality} for the inequality of arithmetic and geometric means.  If $G$ is a graph and $S, U \subseteq V(G)$, we use $G[S]$ for the subgraph of $G$ induced by $S$, and we use $E_G(S, U)$ for the set consisting of all the edges in $E(G)$ that have at least one endpoint in $S$ and at least one endpoint in $U$.  For $v \in V(G)$, we write $d_G(v)$ for the degree of vertex $v$ in the graph $G$ and $\Delta(G)$ for the maximum degree of $G$.  We write $N_G(v)$ (resp. $N_G[v]$) for the neighborhood (resp. closed neighborhood) of vertex $v$ in the graph $G$.  We use $\omega(G)$ to denote the clique number of the graph $G$.  When $C$ is a cycle on $n$ vertices ($n \geq 3$ since $C$ is simple), $V(C)= \{v_1, \ldots, v_n \}$, and $E(C) = \{\{v_1,v_2 \}, \{v_2, v_3 \}, \ldots, \{v_{n-1}, v_n \}, \{v_n, v_1 \} \}$, then we say the vertices of $C$ are written in \emph{cyclic order} when we write $v_1, \ldots, v_n$.  If $G$ and $H$ are vertex disjoint graphs, we write $G \vee H$ for the join of $G$ and $H$. When $G = K_1$, $G \vee H$ is the \emph{cone} of $H$, and the vertex in $V(G)$ is called the \emph{universal vertex} of $G \vee H$.

\subsection{List Coloring and DP-Coloring} \label{basic}

In the classical vertex coloring problem we wish to color the vertices of a graph $G$ with up to $m$ colors from $[m]$ so that adjacent vertices receive different colors, a so-called \emph{proper $m$-coloring}. The chromatic number of a graph $G$, denoted $\chi(G)$, is the smallest $m$ such that $G$ has a proper $m$-coloring.  List coloring is a variation on classical vertex coloring that was introduced independently by Vizing~\cite{V76} and Erd\H{o}s, Rubin, and Taylor~\cite{ET79} in the 1970s.  For list coloring, we associate a \emph{list assignment} $L$ with a graph $G$ such that each vertex $v \in V(G)$ is assigned a list of available colors $L(v)$ (we say $L$ is a list assignment for $G$).  We say $G$ is \emph{$L$-colorable} if there is a proper coloring $f$ of $G$ such that $f(v) \in L(v)$ for each $v \in V(G)$ (we refer to $f$ as a \emph{proper $L$-coloring} of $G$).  A list assignment $L$ is called a \emph{$k$-assignment} for $G$ if $|L(v)|=k$ for each $v \in V(G)$.  The \emph{list chromatic number} of a graph $G$, denoted $\chi_\ell(G)$, is the smallest $k$ such that $G$ is $L$-colorable whenever $L$ is a $k$-assignment for $G$.  We say $G$ is \emph{$k$-choosable} if $k \geq \chi_\ell(G)$.  Note $\chi(G) \leq \chi_\ell(G)$ since a $k$-assginment can assign the same colors to every vertex in $G$. This inequality may be strict since it is known that there are bipartite graphs with arbitrarily large list chromatic numbers (see~\cite{ET79}).

In 2015, Dvo\v{r}\'{a}k and Postle~\cite{DP15} introduced a generalization of list coloring called DP-coloring (they called it correspondence coloring) in order to prove that every planar graph without cycles of lengths 4 to 8 is 3-choosable. DP-coloring has been extensively studied over the past 5 years (see e.g.,~\cite{B16,B17, BK17, BK18, HK21, KM19, KM20, KM21, KO182, M18, MT20}).  Intuitively, DP-coloring is a variation on list coloring where each vertex in the graph still gets a list of colors, but identification of which colors are different can change from edge to edge.  Following~\cite{BK17}, we now give the formal definition. Suppose $G$ is a graph.  A \emph{cover} of $G$ is a pair $\mathcal{H} = (L,H)$ consisting of a graph $H$ and a function $L: V(G) \rightarrow \mathcal{P}(V(H))$ satisfying the following four requirements:

\vspace{5mm}

\noindent(1) the set $\{L(u) : u \in V(G) \}$ is a partition of $V(H)$ of size $|V(G)|$; \\
(2) for every $u \in V(G)$, the graph $H[L(u)]$ is complete; \\
(3) if $E_H(L(u),L(v))$ is nonempty, then $u=v$ or $uv \in E(G)$; \\
(4) if $uv \in E(G)$, then $E_H(L(u),L(v))$ is a matching (the matching may be empty).

\vspace{5mm}

Suppose $\mathcal{H} = (L,H)$ is a cover of $G$.  We refer to the edges of $H$ connecting distinct parts of the partition $\{L(v) : v \in V(G) \}$ as \emph{cross-edges}.  An \emph{$\mathcal{H}$-coloring} of $G$ is an independent set in $H$ of size $|V(G)|$.  It is immediately clear that an independent set $I \subseteq V(H)$ is an $\mathcal{H}$-coloring of $G$ if and only if $|I \cap L(u)|=1$ for each $u \in V(G)$.  We say $\mathcal{H}$ is \emph{$m$-fold} if $|L(u)|=m$ for each $u \in V(G)$.  An $m$-fold cover $\mathcal{H}$ is a \emph{full cover} if for each $uv \in E(G)$, the matching $E_{H}(L(u),L(v))$ is perfect.  The \emph{DP-chromatic number} of $G$, $\chi_{DP}(G)$, is the smallest $m \in \N$ such that $G$ has an $\mathcal{H}$-coloring whenever $\mathcal{H}$ is an $m$-fold cover of $G$.

Suppose $\mathcal{H} = (L,H)$ is an $m$-fold cover of $G$.  We say that $\mathcal{H}$ has a \emph{canonical labeling} if it is possible to name the vertices of $H$ so that $L(u) = \{ (u,j) : j \in [m] \}$ and $(u,j)(v,j) \in E(H)$ for each $j \in [m]$ whenever $uv \in E(G)$.~\footnote{When $\mathcal{H}=(L,H)$ has a canonical labeling, we will always refer to the vertices of $H$ using this naming scheme.}  Clearly, when $\mathcal{H}$ has a canonical labeling, $m \geq \chi(G)$, $\mathcal{I}$ is the set of $\mathcal{H}$-colorings of $G$, and $\mathcal{C}$ is the set of proper $m$-colorings of $G$, the function $f: \mathcal{C} \rightarrow \mathcal{I}$ given by $f(c) = \{ (v, c(v)) : v \in V(G) \}$ is a bijection.  Also, given an $m$-assignment $L$ for a graph $G$, it is easy to construct an $m$-fold cover $\mathcal{H}'$ of $G$ such that $G$ has an $\mathcal{H}'$-coloring if and only if $G$ has a proper $L$-coloring (see~\cite{BK17}).  It follows that $\chi(G) \leq \chi_\ell(G) \leq \chi_{DP}(G)$.  The second inequality may be strict, e.g., it is easy to prove that $\chi_{DP}(C_n) = 3$ whenever $n \geq 3$, but the list chromatic number of any even cycle is 2 (see~\cite{BK17} and~\cite{ET79}).  In some instances DP-coloring behaves similar to list coloring, but there are some interesting differences.  Much of the research on DP-coloring explores its similarities and differences with list coloring (see e.g.,~\cite{B16, B17, BK17}).   
 
\subsection{Counting Proper Colorings and List Colorings}

In 1912 Birkhoff introduced the notion of the chromatic polynomial with the hope of using it to make progress on the four color problem.  For $m \in \N$, the \emph{chromatic polynomial} of a graph $G$, $P(G,m)$, is the number of proper $m$-colorings of $G$. It is well-known that $P(G,m)$ is a polynomial in $m$ of degree $|V(G)|$ (see~\cite{B12}).   For example, $P(K_n,m) = \prod_{i=0}^{n-1} (m-i)$, $P(C_n,m) = (m-1)^n + (-1)^n (m-1)$, and $P(T,m) = m(m-1)^{n-1}$ whenever $T$ is a tree on $n$ vertices (see~\cite{W01}). 

We now mention two chromatic polynomial formulas that will be important in this paper. First, if $G$ is an arbitrary graph and $n \in \N$, then $P(G \vee K_n, m) = P(K_n, m)P(G, m-n)$ whenever $m \geq n+1$ (see~\cite{B94}).  Second, suppose that $n \geq 2$, $G_1, \ldots, G_n$ are vertex disjoint graphs, and $1 \le p \le \min_i\{\omega(G_i)\}$.  Choose a copy of $K_p$ contained in each $G_i$ and form a new graph $G$, called the \emph{$K_p$-gluing of $G_1, \ldots, G_n$}, from the union of $G_1, \ldots, G_n$ by arbitrarily identifying the chosen copies of $K_p$; that is, if $\{u_{i,1}, \ldots, u_{i,p} \}$ is the vertex set of the chosen copy of $K_p$ in $G_i$ for each $i \in [n]$, then identify the vertices $u_{1,j}, \ldots, u_{l,j}$ as a single vertex $u_j$ for each $j \in [p]$.~\footnote{This is equivalent to the clique-sum of  $G_1, \ldots, G_n$ where no edges are removed after the identification of the cliques.} When $p=1$ this is called \emph{vertex-gluing} and when $p=2$ this is called \emph{edge-gluing} (see~\cite{DKT05}). Let $\bigoplus_{i=1}^{n}(G_i,p)$ denote the family of all $K_p$-gluings of $G_1, \ldots, G_n$. It is well-known that for any $G\in \bigoplus_{i=1}^{n}(G_i,p)$, $P(G,m) = \prod_{i=1}^n P(G_i,m)/ \left(\left( \prod_{i=0}^{p-1} (m-i) \right)^{n-1} \right)$ whenever $m \geq p$ (see~\cite{B94, DKT05}).

The notion of chromatic polynomial was extended to list coloring in the early 1990s~\cite{AS90}. If $L$ is a list assignment for $G$, we use $P(G,L)$ to denote the number of proper $L$-colorings of $G$. The \emph{list color function} $P_\ell(G,m)$ is the minimum value of $P(G,L)$ where the minimum is taken over all possible $m$-assignments $L$ for $G$.  Clearly, $P_\ell(G,m) \leq P(G,m)$ for each $m \in \N$.  In general, the list color function can differ significantly from the chromatic polynomial for small values of $m$.  However, for large values of $m$, Wang, Qian, and Yan~\cite{WQ17} (improving upon results in~\cite{D92} and~\cite{T09}) showed the following in 2017.

\begin{thm} [\cite{WQ17}] \label{thm: WQ17}
If $G$ is a connected graph with $l$ edges, then $P_{\ell}(G,m)=P(G,m)$ whenever $m > \frac{l-1}{\ln(1+ \sqrt{2})}$.
\end{thm}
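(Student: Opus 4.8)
The plan is to prove the equivalent inequality $P_\ell(G,m) \ge P(G,m)$; since $P_\ell(G,m) \le P(G,m)$ holds for every $m$ by the remark preceding the theorem, this yields equality. By definition of the list color function as a minimum over $m$-assignments, it suffices to fix an arbitrary $m$-assignment $L$ for $G$ and show $P(G,L) \ge P(G,m)$ whenever $m > (l-1)/\ln(1+\sqrt 2)$.

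First I would write both counting functions through the same subgraph expansion (inclusion--exclusion over the edge set). For $S \subseteq E(G)$ let $c(S)$ denote the number of components of the spanning subgraph $(V(G),S)$. The standard Whitney expansion of the chromatic polynomial and its list analogue give
\[ P(G,m) = \sum_{S \subseteq E(G)}(-1)^{|S|} m^{c(S)}, \qquad P(G,L) = \sum_{S \subseteq E(G)}(-1)^{|S|}\prod_{K}\Bigl|\bigcap_{v\in K}L(v)\Bigr|, \]
the product ranging over the components $K$ of $(V(G),S)$. Setting $\epsilon(S) = m^{c(S)} - \prod_K |\bigcap_{v\in K} L(v)| \ge 0$ (each of the $c(S)$ factors has size at most $m$) and observing that the $S=\emptyset$ terms agree, the difference collapses to
\[ P(G,L) - P(G,m) = \sum_{\emptyset \ne S \subseteq E(G)} (-1)^{|S|-1}\,\epsilon(S), \]
so the goal is exactly to prove this signed sum is nonnegative.

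The positive part is carried by its leading terms. For a single edge $e=uv$ one computes $\epsilon(\{e\}) = m^{|V(G)|-2}\,\delta_e$, where $\delta_e = m - |L(u)\cap L(v)| \ge 0$ is the \emph{deficiency} of $e$, so the $|S|=1$ terms sum to $m^{|V(G)|-2}\sum_e \delta_e$. To control the remaining sign-indefinite terms I would bound each $\epsilon(S)$ from above via the deficiencies: choosing a spanning forest $F_S$ of $(V(G),S)$ and peeling leaves from each tree, the estimate $|X \cap L(w)| \ge |X| - \delta_{vw}$ for $X \subseteq L(v)$ and a leaf $w$ adjacent to $v$ gives $|\bigcap_{v\in K}L(v)| \ge m - \sum_{e\in F_S\cap K}\delta_e$; combining this with $1 - \prod_K(1-x_K) \le \sum_K x_K$ yields $\epsilon(S) \le m^{c(S)-1}\sum_{e\in F_S}\delta_e$ in all cases. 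This reduces the whole problem to a weighted count of edge subsets organized by their rank $|V(G)| - c(S)$.

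The main obstacle is the combinatorial optimization that converts these per-term bounds into the sharp threshold rather than a crude exponential one. Once each $\epsilon(S)$ is expressed through the deficiencies, establishing $\sum_{\emptyset \ne S}(-1)^{|S|-1}\epsilon(S)\ge 0$ amounts to showing the even-$|S|$ (negative) contributions are dominated by the odd-$|S|$ (positive) ones; I would group the subsets $S$ according to whether each added edge raises the rank and bound the resulting series, where simply replacing terms by their absolute values is fatally lossy. I expect the extremal configuration in this estimate to be governed by a quadratic recurrence whose positive root is $1+\sqrt 2$ (the solution of $x^2 = 2x+1$), so that the admissible range takes the form $(1+\sqrt 2)^{m} > e^{\,l-1}$, which is precisely $m > (l-1)/\ln(1+\sqrt 2)$. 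Making this final step tight, and pinning down where connectedness of $G$ (hence $l \ge |V(G)|-1$) enters the extremal bound, is where the real work lies.
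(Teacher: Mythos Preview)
The paper does not prove this theorem. Theorem~\ref{thm: WQ17} is stated with the attribution \cite{WQ17} and is quoted as a known result of Wang, Qian, and Yan; no proof is supplied anywhere in the paper. So there is no ``paper's own proof'' to compare your proposal against.

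As for the proposal itself, it is an outline rather than a proof, and you say so: the passage from the per-term deficiency bounds $\epsilon(S)\le m^{c(S)-1}\sum_{e\in F_S}\delta_e$ to the sharp threshold $(l-1)/\ln(1+\sqrt 2)$ is explicitly left as ``where the real work lies.'' Two concrete gaps: (i) the heuristic that the extremal behavior is governed by a quadratic recurrence $x^2=2x+1$ with root $1+\sqrt 2$ is asserted, not derived, and nothing in the preceding estimates points to that recurrence; (ii) the argument as written works over the full subgraph lattice, whereas the actual proof in \cite{WQ17} proceeds via Whitney's broken-cycle expansion, fixing an edge ordering and restricting to broken-cycle-free subsets, which is what keeps the combinatorics under control and produces the constant $\ln(1+\sqrt 2)$. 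Your inclusion--exclusion framework and deficiency parameters $\delta_e$ are in the right spirit, but without the broken-cycle reduction (or an equivalent device) the alternating sum you need to control has far too many terms for the crude bounds you record to yield anything close to the stated threshold.
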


It is also known that $P_{\ell}(G,m)=P(G,m)$ for all $m \in \N$ when $G$ is a cycle or chordal (see~\cite{KN16} and~\cite{AS90}).  Moreover, if $P_{\ell}(G,m)=P(G,m)$ for all $m \in \N$, then $P_{\ell}(K_n \vee G,m)=P(K_n \vee G,m)$ for each $n, m \in \N$ (see~\cite{KM18}). See~\cite{T09} for a survey of known results and open questions on the list color function.

\subsection{The DP Color Function and Motivating Questions}

Two of the current authors (Kaul and Mudrock in~\cite{KM19}) introduced a DP-coloring analogue of the chromatic polynomial to gain a better understanding of DP-coloring and use it as a tool for making progress on some open questions related to the list color function.  Suppose $\mathcal{H} = (L,H)$ is a cover of graph $G$.  Let $P_{DP}(G, \mathcal{H})$ be the number of $\mathcal{H}$-colorings of $G$.  Then, the \emph{DP color function} of $G$, $P_{DP}(G,m)$, is the minimum value of $P_{DP}(G, \mathcal{H})$ where the minimum is taken over all possible $m$-fold covers $\mathcal{H}$ of $G$.~\footnote{We take $\N$ to be the domain of the DP color function of any graph.} It is easy to show that for any graph $G$ and $m \in \N$, $P_{DP}(G, m) \leq P_\ell(G,m) \leq P(G,m)$.~\footnote{To prove this, recall that for any $m$-assignment $L$ for $G$, an $m$-fold cover $\mathcal{H}'$ of $G$ such that $G$ has an $\mathcal{H}'$-coloring if and only if $G$ has a proper $L$-coloring is constructed in~\cite{BK17}. It is easy to see from the construction in~\cite{BK17} that there is a bijection between the proper $L$-colorings of $G$ and the $\mathcal{H}'$-colorings of $G$.} Note that if $G$ is a disconnected graph with components: $H_1, H_2, \ldots, H_t$, then $P_{DP}(G, m) = \prod_{i=1}^t P_{DP}(H_i,m)$.  So, we will only consider connected graphs from this point forward unless otherwise noted.

As with list coloring and DP-coloring, the list color function and DP color function of certain graphs behave similarly.  However, for some graphs there are surprising differences.  For example, similar to the list color function,  $P_{DP}(G,m) = P(G,m)$ for every $m \in \N$  whenever $G$ is chordal or an odd cycle~\cite{KM19}.  On the other hand, we have the following result.

\begin{thm} [\cite{KM19}] \label{thm: evengirth}
If $G$ is a graph with girth that is even, then there is an $N \in \N$ such that $P_{DP}(G,m) < P(G,m)$ whenever $m \geq N$.  Furthermore, for any integer $g \geq 3$ there exists a graph $M$ with girth $g$ and an $N \in \N$ such that $P_{DP}(M,m) < P(M,m)$ whenever $m \geq N$. 
\end{thm}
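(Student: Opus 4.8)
The plan is to prove both statements by exhibiting, for a suitable graph, a single non-canonical cover whose number of colorings drops below $P(G,m)$ for all large $m$; the bound $P_{DP}(G,m) \le P_{DP}(G,\mathcal{H})$ then finishes the job. For the first statement, let $C = v_1 v_2 \cdots v_g v_1$ be a shortest cycle of $G$, so $g$ is even, and set $G' = G - v_1 v_g$. I would take the $m$-fold cover $\mathcal{H} = (L,H)$ with $L(u) = \{(u,j) : j \in [m]\}$ in which every cross-edge matching is the identity $(u,j)(w,j)$ except the one across $v_1 v_g$, which I twist by a fixed-point-free permutation $\tau$ (for instance an $m$-cycle, available once $m \ge 2$) to $\{(v_g,j)(v_1,\tau(j)) : j \in [m]\}$. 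Then the $\mathcal{H}$-colorings of $G$ are exactly the maps $c : V(G) \to [m]$ that are proper on $G'$ and satisfy $c(v_1) \ne \tau(c(v_g))$, whereas $P(G,m)$ counts the maps proper on $G'$ with $c(v_1) \ne c(v_g)$.

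Writing $f(i,j)$ for the number of proper colorings of $G'$ with $c(v_g)=i$ and $c(v_1)=j$, the permutation symmetry of the color set $[m]$ forces $f(i,i)=a$ and $f(i,j)=b$ (for $i\ne j$) for constants $a,b$ depending only on $G'$ and $m$. Since $\tau$ is fixed-point-free, a short computation gives $P(G,m) - P_{DP}(G,\mathcal{H}) = \sum_i \big(f(i,\tau(i)) - f(i,i)\big) = m(b-a)$. I would then identify $ma = P(G'',m)$, where $G''$ is obtained from $G'$ by identifying $v_1$ and $v_g$, and $m(m-1)b = P(G,m)$; combined with the deletion--contraction identity $P(G,m) = P(G',m) - P(G'',m)$ this rewrites the gap as
\[
P(G,m) - P_{DP}(G,\mathcal{H}) = \frac{D(m)}{m-1}, \qquad D(m) := P(G',m) - m\,P(G'',m).
\]
Thus it suffices to prove $D(m) > 0$ for all large $m$.

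The crux, and the step I expect to be the main obstacle, is determining the sign of the leading coefficient of the polynomial $D(m)$. Here I would apply the Whitney expansion $P(H,m) = \sum_{S \subseteq E(H)} (-1)^{|S|} m^{k(S)}$, where $k(S)$ is the number of components of the spanning subgraph $(V(H),S)$. Because $g$ is even we have $g \ge 4$, so $v_1$ and $v_g$ have no common neighbor in $G'$ (such a neighbor would close a triangle through $v_1 v_g$ in $G$); hence the contraction creates no parallel edges and the edge sets of $G'$ and $G''$ correspond bijectively. Comparing the two expansions term by term, every $S$ in which $v_1$ and $v_g$ lie in distinct components contributes $0$, leaving
\[
D(m) = (m-1) \sum_{S \,:\, v_1,\,v_g \text{ connected in } S} (-1)^{|S|+1}\, m^{k(S)}.
\]
The top-order terms come from the sparsest connecting sets, namely the shortest $v_1$--$v_g$ paths in $G'$; each has $g-1$ edges (a shorter one would give a cycle of length $<g$), contributes $(-1)^{g}\, m^{|V(G)|-g+1}$, and at least one exists (the path along $C$). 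Since $g$ is even this sign is $+1$, so the leading coefficient of $D(m)$ is positive, $D(m) > 0$ for all large $m$, and $P_{DP}(G,m) < P(G,m)$ eventually. (For odd girth the same computation yields sign $-1$, which is exactly why odd cycles have $P_{DP}=P$.)

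For the ``furthermore'' statement I would reuse this machinery, the key point being that the leading sign is $(-1)^{\ell+1}$ where $\ell$ is the length of the shortest $v_1$--$v_g$ path after deleting the twisted edge, equivalently the shortest cycle through that edge must be \emph{even}. When $g$ is even, $M = C_g$ works at once (indeed the cycle computation gives $P_{DP}(C_g,\mathcal{H}) = (m-1)^g - 1 < (m-1)^g + (m-1) = P(C_g,m)$). When $g$ is odd I need a graph of girth $g$ carrying an edge whose shortest cycle is even; a single theta graph fails, since there every edge sits on a short odd cycle. Instead I would take $M$ to be the vertex-gluing of $C_g$ and $C_{g+1}$ at one vertex: then $M$ has girth $g$, and an edge $e = xy$ of the $C_{g+1}$ chosen away from the gluing vertex lies on no triangle and has $C_{g+1}$ as its shortest cycle. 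Twisting $e$ makes the shortest $x$--$y$ path in $M-e$ have the odd length $g$, so the corresponding $D(m)$ again has positive leading coefficient and $P_{DP}(M,m) < P(M,m)$ for all large $m$.
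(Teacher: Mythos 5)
Your proof is correct, but there is nothing in this paper to compare it against line by line: Theorem~\ref{thm: evengirth} is stated here only as a citation to~\cite{KM19}, and the paper never reproduces a proof of it. Judged on its own terms, every step of your argument checks out: the twisted cover counts precisely the colorings of $G'=G-v_1v_g$ that are proper on $G'$ and satisfy $c(v_1)\neq\tau(c(v_g))$; color symmetry legitimately collapses $f(i,j)$ to the two constants $a,b$; the identities $ma=P(G'',m)$ and $m(m-1)b=P(G,m)$ together with deletion--contraction give the gap $\bigl(P(G',m)-mP(G'',m)\bigr)/(m-1)$; and the Whitney-expansion sign analysis is valid, though you should add the one-line justification that the sets $S$ attaining the maximal $k(S)$ are \emph{exactly} the shortest $v_1$--$v_g$ paths (this uses that shortest paths are induced, plus your no-common-neighbour observation, which is also what makes $E(G')$ and $E(G'')$ correspond bijectively). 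A reassuring sanity check: for $G=C_4$ your formula gives a gap of exactly $m$, matching $P(C_4,m)-P_{DP}(C_4,m)=\bigl((m-1)^4+(m-1)\bigr)-\bigl((m-1)^4-1\bigr)$. The one place where you demonstrably depart from~\cite{KM19}, insofar as this paper reports its method, is the odd-girth construction: the remark after Theorem~\ref{thm: evengirth} says those examples are built by \emph{edge-gluing} an odd-girth graph with a \emph{sufficiently large} even cycle, whereas you \emph{vertex-glue} $C_g$ with $C_{g+1}$ and twist an edge of the even cycle away from the cut vertex. Your route is leaner: the criterion you actually prove (any edge lying on no triangle whose shortest containing cycle is even forces eventual strict inequality) removes any need for the attached even cycle to be long, at the modest cost of checking the block structure of the vertex-gluing (every cycle of $M$ lies in one of the two blocks, so the girth is $g$ and the unique cycle through the twisted edge has even length $g+1$), which you do.
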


This result is particularly interesting since Theorem~\ref{thm: WQ17} implies that the list color function of any graph eventually equals its chromatic polynomial.  It is also interesting to note that the key to showing there is a graph $G$ with girth $g$ that is odd such that $P_{DP}(G,m)$ does not eventually equal its chromatic polynomial, is to take an edge-gluing of a graph with girth that is odd and a sufficiently large even cycle. 

In contrast, the following recent result tells us that the DP color function of the cone of a graph behaves like the list color function in that it eventually equals its chromatic polynomial.

\begin{thm} [\cite{MT20}] \label{thm: seth}
For any graph $G$, $P(G,m)-P_{DP}(G,m) = O(m^{n-3})$ as $m \rightarrow \infty$.  Moreover, there exists an $N \in \N$ such that $P_{DP}(K_1 \vee G, m) = P(K_1 \vee G, m)$ whenever $m \geq N$. 
\end{thm}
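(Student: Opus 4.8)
The plan is to expand the DP color function by inclusion--exclusion over full covers. Since enlarging any matching $E_H(L(u),L(v))$ only deletes $\HH$-colorings, the minimum defining $P_{DP}(G,m)$ is attained on a \emph{full} cover, so I may assume $\HH=(L,H)$ is full and write $L(v)=\set{(v,j):j\in[m]}$; each edge $uv$ then determines a permutation $\sigma_{uv}\in S_m$ with $(u,j)(v,\sigma_{uv}(j))\in E(H)$, and $c\colon V(G)\to[m]$ is an $\HH$-coloring precisely when $c(v)\neq\sigma_{uv}(c(u))$ for every $uv\in E(G)$. For $S\subseteq E(G)$ put $A_S=\set{c:(u,c(u))(v,c(v))\in E(H)\text{ for all }uv\in S}$ and let $\kappa(S)$ be the number of components of $(V(G),S)$. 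Inclusion--exclusion over the events ``$uv$ is used'' gives $P_{DP}(G,\HH)=\sum_{S\subseteq E(G)}(-1)^{\abs S}\abs{A_S}$. Two facts drive everything: $\abs{A_S}\le m^{\kappa(S)}$ always, and $\abs{A_S}=m^{\kappa(S)}$ when $(V(G),S)$ is a forest (a colour chosen freely on each tree propagates uniquely through the perfect matchings). Since the Whitney expansion reads $P(G,m)=\sum_S(-1)^{\abs S}m^{\kappa(S)}$, subtracting leaves only the cyclic subsets:
\[
P(G,m)-P_{DP}(G,\HH)=\sum_{S:\,(V(G),S)\text{ contains a cycle}}(-1)^{\abs S}\bigl(m^{\kappa(S)}-\abs{A_S}\bigr).
\]

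For the first assertion, set $n=\abs{V(G)}$ and estimate the degree in $m$ of each surviving term. Writing $j$ for the number of non-isolated vertices of $S$ and $p$ for the number of nontrivial components, $\kappa(S)=n-(j-p)$ with $j-p=\sum_T(\abs{T}-1)$ over the nontrivial components. A component with a cycle has at least as many edges as vertices, so $j-p\ge 2$, and $j-p=2$ forces $S$ to be a single triangle, whence $\abs S=3$ is odd. Therefore every cyclic $S$ with $\abs S$ \emph{even} has $j-p\ge 3$, i.e.\ $\kappa(S)\le n-3$. The even-$\abs S$ terms are the only positive ones, each at most $m^{\kappa(S)}\le m^{n-3}$, while the odd-$\abs S$ terms are $\le 0$; summing over the $\le 2^{\abs{E(G)}}$ subsets gives $P(G,m)-P_{DP}(G,\HH)\le 2^{\abs{E(G)}}m^{n-3}$ uniformly in $\HH$. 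With $P_{DP}(G,m)\le P(G,m)$ this yields $P(G,m)-P_{DP}(G,m)=O(m^{n-3})$.

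For the cone $K_1\vee G$ with universal vertex $w$, I would first relabel each $L(v)$ so that every star matching $\sigma_{wv}$ is the identity; then fixing $c(w)=a$ forbids the \emph{same} colour $a$ at each $v\in V(G)$. Summing over $a$ gives the identity $P_{DP}(K_1\vee G,\HH)=\sum_{c\in\II}\bigl(m-\nu(c)\bigr)$, where $\II$ is the set of $\HH_G$-colorings of $G$ for the induced cover $\HH_G$ and $\nu(c)=\abs{c(V(G))}$ is the number of colours $c$ uses; the same computation on a canonical cover gives $P(K_1\vee G,m)=\sum_{c\in\CC}(m-\nu(c))=m\,P(G,m-1)$, summed over proper colorings $\CC$. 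Carrying the weight $m-\nu(c)$ through the same inclusion--exclusion turns the difference into $P(K_1\vee G,m)-P_{DP}(K_1\vee G,\HH)=\sum_S(-1)^{\abs S}\Delta(S)$ with $\Delta(S)=W_{\mathrm{can}}(S)-W(S)$ and $W(S)=\sum_{c\in A_S}(m-\nu(c))$.

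The crux is two uniform estimates. First, a lemma that $\Delta(S)\ge 0$ for all $S$: forcing matched vertices to share a colour (the canonical choice) minimizes $\nu(c)$, hence maximizes $W(S)$. Second, with $\delta_e=m-\abs{\mathrm{Fix}(\sigma_e)}$ the single-edge defect, a direct count gives $\Delta(\set e)=(m-1)^{n-2}\delta_e$, while for \emph{every even} $S$ with $\abs S\ge 2$ one shows $\Delta(S)\le\tfrac{C}{m}\sum_{e\in S}\Delta(\set e)$: for even forests because the leading $m\cdot m^{\kappa(S)}$ cancels in $W_{\mathrm{can}}-W$, lowering the degree to $\kappa(S)\le n-2$, and for even cyclic $S$ because $\kappa(S)\le n-3$ together with the subadditivity $m-\abs{\mathrm{Fix}(\pi_S)}\le\sum_{e\in S}\delta_e$ of the defect of the cyclic composition $\pi_S$. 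Granting these, the odd terms with $\abs S\ge 3$ are $\le 0$ and discarded, so for $m\ge N(G):=C\,2^{\abs{E(G)}}$,
\[
P(K_1\vee G,m)-P_{DP}(K_1\vee G,\HH)\le -\sum_{e}\Delta(\set e)+\sum_{\substack{\abs S\ge 2\\ \abs S\text{ even}}}\Delta(S)\le\Bigl(\tfrac{C\,2^{\abs{E(G)}}}{m}-1\Bigr)\sum_{e}\Delta(\set e)\le 0.
\]
Thus $P_{DP}(K_1\vee G,\HH)\ge P(K_1\vee G,m)$ for every full cover, and as the canonical cover attains equality we get $P_{DP}(K_1\vee G,m)=P(K_1\vee G,m)$ for $m\ge N$. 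I expect the main obstacle to be the second estimate: bounding $W(S)=\sum_{A_S}(m-\nu(c))$ for arbitrary covers needs a further inclusion--exclusion to handle colour coincidences among the matched components, and it is this analysis that fixes the threshold $N$ and makes it grow with $\abs{E(G)}$.
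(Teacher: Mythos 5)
First, a point of reference: the present paper does not prove this theorem at all --- it is quoted from~\cite{MT20} --- so your proposal has to be judged on its own terms rather than against a proof in this manuscript. Your first half stands and is complete: the reduction to full covers, the inclusion--exclusion identity $P_{DP}(G,\HH)=\sum_{S\subseteq E(G)}(-1)^{\abs{S}}\abs{A_S}$, the comparison with Whitney's expansion $P(G,m)=\sum_{S}(-1)^{\abs{S}}m^{\kappa(S)}$, and the parity observation that a cyclic $S$ with $\kappa(S)=n-2$ must be a single triangle (hence $\abs{S}$ odd), so every surviving term with positive sign has $\kappa(S)\le n-3$, correctly give $P(G,m)-P_{DP}(G,m)\le 2^{\abs{E(G)}}m^{n-3}$.

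The cone half, however, has a genuine gap, and it sits exactly at the crux. The setup is fine: relabeling the star canonically, the weighted identity $P_{DP}(K_1\vee G,\HH)=\sum_{c\in\II}(m-\nu(c))$, the expansion $P(K_1\vee G,m)-P_{DP}(K_1\vee G,\HH)=\sum_{S}(-1)^{\abs{S}}\Delta(S)$, and the single-edge computation $\Delta(\set{e})=(m-1)^{n-2}\delta_e$ all check out. But the theorem then rests entirely on your two ``uniform estimates,'' and you prove neither. Even the first ($\Delta(S)\ge 0$) needs more than the stated heuristic: for cyclic $S$ the sets $A_S$ and $A_S^{\mathrm{can}}$ have different cardinalities, so no pointwise comparison exists; one needs the collapse injection $\phi\colon A_S\to A_S^{\mathrm{can}}$ (send $c$ to the coloring that is constant on each component of $(V(G),S)$ with the root's color), the pointwise inequality $\nu(\phi(c))\le\nu(c)$, and nonnegativity of the leftover weights, which requires $m\ge n$. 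The second estimate, $\Delta(S)\le (C/m)\sum_{e\in S}\Delta(\set{e})$ for even $\abs{S}\ge 2$, is the heart of the whole argument --- it is what makes the negative single-edge terms dominate all positive terms --- and it is only asserted (``one shows''), with you yourself flagging it as the main obstacle. For the record, it is true with $C$ depending only on $G$: using the same injection, $\Delta(S)$ splits as $\sum_{c\in A_S}(\nu(c)-\nu(\phi(c)))$, which is at most $n\bigl(\sum_{e\in S}\delta_e\bigr)m^{\kappa(S)-1}$ by subadditivity of defects along tree paths, plus $\sum_{c'\in A_S^{\mathrm{can}}\setminus\phi(A_S)}(m-\nu(c'))$, which is at most $\abs{S}\bigl(\sum_{e\in S}\delta_e\bigr)m^{\kappa(S)}$ by counting root tuples killed by fundamental-cycle constraints; combined with $\kappa(S)\le n-2$ for forests with $\abs{S}\ge2$ and $\kappa(S)\le n-3$ for even cyclic $S$, this yields the claim. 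So your architecture is viable and can be completed, but as written the cone statement is not proved: the lemma carrying all the difficulty is missing.
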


However, unlike the bound provided in Theorem~\ref{thm: WQ17}, given a graph $G$, little is known about the $N$ in Theorem~\ref{thm: seth} (see~\cite{MT20}).  With this in mind, for any graph $G$, we define the \emph{DP color function threshold of G}, $\tau_{DP}(G)$, to be the smallest $N \geq \chi(G)$ such that $P_{DP}(G,m) = P(G,m)$ whenever $m \geq N$.  If $P(G,m) - P_{DP}(G,m) > 0$ for infinitely many $m$, we let $\tau_{DP}(G) = \infty$.  We now state an important open question related to the DP color function threshold~\footnote{The list color function analogue of Question~\ref{ques: mono} is also open~\cite{KN16}.}.

\begin{ques} [\cite{KM19}] \label{ques: mono}
If $P_{DP}(G,m_0) = P(G,m_0)$ for some $m_0 \geq \chi(G)$, does it follow that $\tau_{DP}(G) \leq m_0$?
\end{ques}

An equivalent way to ask Question~\ref{ques: mono} is: If $P_{DP}(G,m_0) = P(G,m_0)$ for some $m_0 \geq \chi(G)$, then does it follow that $P_{DP}(G,m_0 + 1) = P(G,m_0 + 1)$?  

Our focus in this paper is on questions that arise naturally from the ideas discussed above.  Specifically, with Theorem~\ref{thm: seth} and Question~\ref{ques: mono} in mind, we wish to pursue the following question.

\begin{ques} \label{ques: join}
Given an arbitrary graph $G$ and $p \in \N$, what is the value of $\tau_{DP}(K_p \vee G)$?
\end{ques}

Note that $\tau_{DP}(K_p \vee G)$ must be finite for any $p$ and $G$ by Theorem~\ref{thm: seth}. Since the proof of Theorem~\ref{thm: evengirth} tells us that an edge-gluing of certain graphs can have DP color function that does not eventually equal its chromatic polynomial, it is natural to ask whether the formula for the chromatic polynomial of $K_p$-gluings of graphs has a DP color function analogue.

\begin{ques} \label{ques: classify}
Given $p \ge 1$, $n \geq 2$, vertex disjoint graphs $G_1, \ldots, G_n$, and $G \in \bigoplus_{i=1}^{n}(G_i,p)$, is it the case that 
$$P_{DP}(G,m) \leq \frac{\prod_{i=1}^n P_{DP}(G_i,m)}{\left( \prod_{i=0}^{p-1} (m-i) \right)^{n-1}}$$ 
for all $m \geq p$? Moreover, for which vertex disjoint graphs $G_1, \ldots, G_n$, and which $G \in \bigoplus_{i=1}^{n}(G_i,p)$ is this bound tight?
\end{ques} 

We will see below that  the inequality in Question~\ref{ques: classify} need not be an equality in all situations: when $p=1$, $n=2$, $G_1 = G_2 = K_1 \vee C_4$, and $G$ is formed by gluing the universal vertices of $G_1$ and $G_2$ respectively, then $P_{DP}(G,4) < (P_{DP}(G_1,4)P_{DP}(G_2,4))/4$.  The focus of the last section of this paper is on making progress on Question~\ref{ques: classify}. 

\subsection{Outline of Results}

In Section~\ref{join} we begin by exploring the relationship between the DP color function of $K_p \vee G$ and $K_{p+1} \vee G$. We prove the following.

\begin{thm} \label{thm: monoinp}
Suppose $P_{DP}(K_p \vee G, m) = P(K_p \vee G, m)$ whenever $m \geq N$.  Then, $P_{DP}(K_{p+1} \vee G, s) = P(K_{p+1} \vee G, s)$ whenever $s \geq N+1$.  Consequently, $\tau_{DP}(K_{p+1} \lor G) \leq \tau_{DP}(K_p \lor G) + 1$. 
\end{thm}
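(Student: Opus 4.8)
The plan is to establish the lower bound $P_{DP}(K_{p+1} \vee G, s) \geq P(K_{p+1} \vee G, s)$ for every $s \geq N+1$; since $P_{DP}(H,m) \leq P(H,m)$ always holds, this forces equality. The key structural observation is that $K_{p+1} \vee G = K_1 \vee (K_p \vee G)$, so $K_{p+1} \vee G$ has a universal vertex $u$. Applying the cited join formula with $n=1$ (valid since $s \geq N+1 \geq 2$) gives $P(K_{p+1} \vee G, s) = s \cdot P(K_p \vee G, s-1)$. Hence it suffices to show that every $s$-fold cover of $K_{p+1} \vee G$ admits at least $s \cdot P(K_p \vee G, s-1)$ colorings.

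First I would fix an arbitrary $s$-fold cover $\mathcal{H} = (L,H)$ of $K_{p+1} \vee G$ and partition its $\mathcal{H}$-colorings according to the color assigned to $u$. For each $a \in L(u)$, an $\mathcal{H}$-coloring with $u$ colored $a$ is precisely an independent set containing $a$ that selects one surviving color from each remaining part; these are in bijection with the colorings of the cover $\mathcal{H}_a$ of $K_p \vee G$ obtained by deleting the part $L(u)$ and every color in the other parts adjacent to $a$. Because $u$ is universal and each incident edge carries a matching (cover axiom (4)), the color $a$ is matched to at most one color in each part $L(v)$, so every part of $\mathcal{H}_a$ has size $s-1$ or $s$.

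Next I would reduce $\mathcal{H}_a$ to a genuine $(s-1)$-fold cover $\mathcal{H}_a'$ by deleting one arbitrary color from each part of size $s$. Deleting available colors cannot create colorings, so $P_{DP}(K_p \vee G, \mathcal{H}_a) \geq P_{DP}(K_p \vee G, \mathcal{H}_a')$, and $\mathcal{H}_a'$ is a legitimate $(s-1)$-fold cover since the four cover axioms are inherited by induced subgraphs and by subsets of matchings. As $s-1 \geq N$, the hypothesis yields $P_{DP}(K_p \vee G, \mathcal{H}_a') \geq P_{DP}(K_p \vee G, s-1) = P(K_p \vee G, s-1)$. Summing over the $s$ choices of $a \in L(u)$ gives $P_{DP}(K_{p+1} \vee G, \mathcal{H}) \geq s \cdot P(K_p \vee G, s-1) = P(K_{p+1} \vee G, s)$, as required. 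For the consequence, I would set $N = \tau_{DP}(K_p \vee G)$, apply the theorem to conclude equality for all $s \geq N+1$, and note that $N+1 \geq \chi(K_p \vee G)+1 = \chi(K_{p+1} \vee G)$, so $N+1$ is an admissible value and $\tau_{DP}(K_{p+1} \vee G) \leq \tau_{DP}(K_p \vee G) + 1$.

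I expect the main obstacle to be verifying cleanly that the two deletion operations preserve cover structure and that the bijection of the second paragraph is exact: specifically, confirming that restricting $H$ to the surviving colors keeps each part complete and each matching a matching, so that $\mathcal{H}_a'$ is a bona fide $(s-1)$-fold cover to which the hypothesis applies, and checking that the colors deleted in forming $\mathcal{H}_a$ are exactly those forbidden by the choice $u = a$ (so that no valid coloring is lost or double-counted across the $s$ classes).
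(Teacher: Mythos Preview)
Your proof is correct and follows essentially the same strategy as the paper: recognize $K_{p+1}\vee G$ as $K_1\vee(K_p\vee G)$, partition the $\mathcal{H}$-colorings of an $s$-fold cover by the choice at the universal vertex, and bound each class from below via an $(s-1)$-fold cover of $K_p\vee G$ together with the hypothesis. The only cosmetic difference is that the paper first replaces $\mathcal{H}$ by a full cover (so each cone reduction is automatically an exact $(s-1)$-fold cover), whereas you keep the cover arbitrary and then trim any oversized parts down to size $s-1$; both devices accomplish the same reduction.
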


Then, we develop techniques to completely determine the DP color functions of wheels (i.e., the cone of a cycle).

\begin{thm} \label{thm: wheel} 
For any $k,m \in \N$, $P_{DP}(K_1\lor C_{2k+1},m)=P(K_1\lor C_{2k+1},m)$, and 
\[
  P_{DP}(K_1 \lor C_{2k+2}, m) =
  \begin{cases}
                                   0 & \text{if $m \in [2]$} \\
                                   3 & \text{if $m=3$} \\ 
  P(K_1 \lor C_{2k+2}, m) & \text{if $m \geq 4$}.
  \end{cases}
\]
\end{thm}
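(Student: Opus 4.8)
The plan is to prove the theorem in two parts, handling odd and even wheels separately, and to let the parity of the cycle drive everything. Throughout I will use Theorem~\ref{thm: seth} to reduce the problem to small values of $m$, and I will exploit the fact that $K_1 \lor C_n$ is the cone over a cycle, so that an $m$-fold cover decomposes naturally: the universal vertex $u$ carries a part $L(u)$ of size $m$, and each choice of color for $u$ ``deletes'' one color at each cycle vertex via the matchings along the spokes, leaving an induced cover of the cycle $C_n$ on the surviving colors. The central tool will be to count $\HH$-colorings by first summing over the color assigned to $u$ and then counting $\HH'$-colorings of the residual cover of $C_n$.

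First I would treat the odd case $K_1 \lor C_{2k+1}$. Since odd cycles are known to satisfy $P_{DP}(C_n,m)=P(C_n,m)$ for all $m$ (odd cycles are among the graphs for which the DP color function equals the chromatic polynomial, as stated in the excerpt), and since $K_1 \lor C_{2k+1}$ contains a triangle and so has $\chi=4$ forcing $P_{DP}=P=0$ for $m\le 3$, the work is to show no cover can beat the chromatic polynomial for $m\ge 4$. I would fix an arbitrary $m$-fold cover $\HH=(L,H)$, and for each color $\alpha\in L(u)$ let $\HH_\alpha'$ denote the residual cover of $C_{2k+1}$ obtained by deleting, at each cycle vertex $v$, the unique color matched to $\alpha$ across the spoke edge $uv$ (if such a color exists). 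This $\HH_\alpha'$ is an $(m-1)$-fold or $m$-fold cover of an odd cycle, so the number of $\HH$-colorings extending the choice $u\mapsto\alpha$ equals $P_{DP}(C_{2k+1},\HH_\alpha')\ge P_{DP}(C_{2k+1},m-1)=P(C_{2k+1},m-1)$. Summing over the $m$ choices for $\alpha$ should give exactly $P(K_1\lor C_{2k+1},m)$ via the identity $P(K_1\lor C_n,m)=m\,P(C_n,m-1)$ (the $p=1$ case of the join formula in the excerpt), provided each residual cover really is a genuine full $(m-1)$-fold cover of $C_{2k+1}$; handling non-full covers, where some spoke matchings are not perfect, is a routine but necessary bookkeeping step that can only increase the count.

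For the even case $K_1 \lor C_{2k+2}$ the small-$m$ values $m\in[2]$ and $m=3$ must be computed directly. The $m\le 2$ values are $0$ since $\chi=3$. For $m=3$, I would exhibit a specific $3$-fold cover achieving exactly $3$ colorings and argue no cover does better; this is a concrete finite optimization, and the value $3$ suggests using the cover where the spoke matchings are ``shifted'' so that after fixing $u$ the residual cover of the even cycle $C_{2k+2}$ is the canonical one forcing a proper $2$-coloring, of which an even cycle has exactly $2$, but interference from the remaining structure trims this further. For $m\ge 4$ the argument mirrors the odd case, except that even cycles satisfy $P_{DP}(C_n,m)=P(C_n,m)$ only for $m\ge 3$ (for $m=2$ the even cycle has DP color function strictly below its chromatic polynomial by Theorem~\ref{thm: evengirth}), so I must ensure the residual cover is at least $3$-fold; this is exactly why the threshold jumps to $m\ge 4$ rather than $m\ge 3$.

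The main obstacle, and the step I expect to require the most care, is the $m=3$ case of the even wheel: both establishing the lower bound $P_{DP}(K_1\lor C_{2k+2},3)\ge 3$ over all $3$-fold covers and constructing a cover that attains it. The difficulty is that when $m-1=2$, the residual cover of the even cycle is precisely where DP-coloring diverges from ordinary coloring, so I cannot simply invoke a clean formula for $P_{DP}(C_{2k+2},2)$ and sum; instead I must track how the three choices for the color of $u$ induce three (possibly different) $2$-fold covers of $C_{2k+2}$, compute the minimum total over all consistent matchings, and verify the minimum is $3$. I anticipate this reduces to a parity/cycle argument on the product of the matching permutations around the even cycle, where the ``bad'' covers that minimize the count correspond to those whose composed permutation is a non-identity involution, and a short case analysis pins the minimum at exactly $3$.
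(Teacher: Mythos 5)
Your treatment of the odd wheel is sound and is essentially the paper's own argument (it is Lemma~\ref{lem: joinEquality}: fix the vertex chosen in $L(w)$, pass to the cone reduction, use $P_{DP}(C_{2k+1},m-1)=P(C_{2k+1},m-1)$, and sum over the $m$ choices). The fatal problem is your even case for $m \geq 4$, which rests on a false premise: you assert that even cycles satisfy $P_{DP}(C_n,m)=P(C_n,m)$ for $m\ge 3$, with failure only at $m=2$. The opposite is true. By the result of~\cite{KM19} quoted in the paper as Theorem~\ref{thm: onecycle}(ii), $P_{DP}(C_{2k+2},m)=(m-1)^{2k+2}-1$, whereas $P(C_{2k+2},m)=(m-1)^{2k+2}+(m-1)$, so $P_{DP}(C_{2k+2},m)<P(C_{2k+2},m)$ for \emph{every} $m\ge 2$; indeed Theorem~\ref{thm: evengirth} already forces strict inequality for all large $m$, directly contradicting your claim. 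Consequently your plan of bounding the number of colorings extending each choice at the universal vertex by the worst case over all $(m-1)$-fold covers of $C_{2k+2}$, and then summing, can only yield $P_{DP}(K_1\lor C_{2k+2},m)\ge m\left((m-2)^{2k+2}-1\right)$, which falls short of $P(K_1\lor C_{2k+2},m)=m\left((m-2)^{2k+2}+(m-2)\right)$ by exactly $m(m-1)$. No threshold on $m$ repairs this: the vertex-by-vertex worst-case bound is too weak for even cycles at every $m$, so ``ensuring the residual cover is at least $3$-fold'' buys you nothing.

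The missing idea, which is the heart of the paper's proof (Lemma~\ref{lem: sethFormula}), is that the $m$ residual covers $\HH^{(1)},\ldots,\HH^{(m)}$ are not independent adversaries: they all arise from a single cover of the wheel, and the spoke matchings correlate them. The paper quantifies this with the notion of level vertices: writing $s$ for their number, the coloring count of each cone reduction is bounded below by an expression involving the global parameter $s$ (via a quantity $T\ge S$ counting aligned rows of the residual cover, with $S\in\{s-1,s\}$), and a non-level vertex necessarily loses a cross-edge, which raises its count. Aggregating these bounds and minimizing over the admissible values of $s$ shows that for $m\ge 4$ the adversary's optimum is the canonical cover ($s=m$), giving exactly $P(M,m)$, while for $m=3$ the optimum flips to $s=0$, giving the value $3$. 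In other words, twisting one residual cover to undercount is only possible at the cost of improving others or deleting cross-edges, and this trade-off must be tracked for all $m\ge 4$, not just $m=3$. Ironically, the correlation analysis you correctly anticipate as the hard step at $m=3$ is precisely what your $m\ge 4$ argument omits, and without it the proof does not go through.
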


Notice that Theorem~\ref{thm: wheel} tells us that the answer to Question~\ref{ques: mono} is yes when we restrict our attention to wheels.  We end Section~\ref{join} by answering Question~\ref{ques: join} in the case of the join of a complete graph and cycle.

\begin{thm} \label{thm: threshold}
For any $p \in \N$ and $n \geq 3$, $\tau_{DP}(K_p \vee C_n) = 3+p$.
\end{thm}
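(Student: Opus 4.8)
The plan is to prove $\tau_{DP}(K_p \vee C_n) = 3+p$ by combining the three theorems already at hand with direct lower-bound constructions. First I would establish the upper bound $\tau_{DP}(K_p \vee C_n) \le 3+p$. By Theorem~\ref{thm: wheel}, $P_{DP}(K_1 \vee C_n, m) = P(K_1 \vee C_n, m)$ for all $m \ge 4$ (the value $4$ being forced only by the even-cycle case; for odd cycles equality holds for all $m$). Writing $K_p \vee C_n = K_{p-1} \vee (K_1 \vee C_n)$ and applying Theorem~\ref{thm: monoinp} inductively $p-1$ times, starting from the threshold $N = 4$ for $K_1 \vee C_n$, yields $P_{DP}(K_p \vee C_n, m) = P(K_p \vee C_n, m)$ whenever $m \ge 4 + (p-1) = 3+p$. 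This gives $\tau_{DP}(K_p \vee C_n) \le 3+p$ uniformly in $n \ge 3$.

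The harder direction is the matching lower bound: I must exhibit, for $m = 3+p-1 = 2+p$, an $m$-fold cover $\mathcal{H}$ of $K_p \vee C_n$ with $P_{DP}(K_p \vee C_n, \mathcal{H}) < P(K_p \vee C_n, m)$, so that equality fails at $m = 2+p$ and the threshold is no smaller than $3+p$. The natural strategy is to build such a cover by combining a canonically labeled cover on the $K_p$ part with a strategically chosen non-canonical cover on the cycle. Concretely, I would use the chromatic-polynomial identity $P(K_p \vee C_n, m) = P(K_p, m)\,P(C_n, m-p)$ (the join formula stated in the excerpt, valid for $m \ge p+1$) together with the analogous counting for $\mathcal{H}$-colorings: once the $p$ vertices of $K_p$ are colored (contributing the factor $P(K_p,m) = \prod_{i=0}^{p-1}(m-i)$ under a canonical labeling), each remaining color choice on the cycle vertices must avoid the $p$ colors already used on $K_p$ as well as satisfy the cover constraints along the cycle. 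This effectively reduces the cycle part to a $(m-p)$-fold cover problem on $C_n$, and since $m - p = 2$, the cycle is being DP-colored with $2$ colors per vertex. The key fact is that $\chi_{DP}(C_n) = 3$ for all $n \ge 3$, so with only $2$ available colors per cycle vertex there exist covers of $C_n$ admitting strictly fewer colorings than $P(C_n, 2)$; choosing the cover of $C_n$ to be a "bad" $2$-fold cover (one with no $\mathcal{H}$-coloring, or with fewer colorings than the list-coloring count) forces $P_{DP}(K_p \vee C_n, \mathcal{H}) < P(K_p \vee C_n, 2+p)$.

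The main obstacle I anticipate is making the reduction from the cover of $K_p \vee C_n$ to a cover of $C_n$ rigorous and showing it genuinely yields a strict deficit rather than just an inequality. One must verify that the optimal cover of $K_p \vee C_n$ at $m = 2+p$ can be taken to restrict to a canonical cover on $K_p$ and on all the cross-edges between $K_p$ and $C_n$, so that the counting cleanly factors as $\left(\prod_{i=0}^{p-1}(m-i)\right)$ times the number of $\mathcal{H}'$-colorings of the induced $2$-fold cover $\mathcal{H}'$ on $C_n$. This requires a careful argument—likely an averaging or symmetrization over the colors used on $K_p$, or an explicit construction—to ensure no loss occurs and that the residual freedom on the cycle is exactly a $2$-fold cover. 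I would handle this by explicitly constructing the cover: take a canonical labeling throughout, then modify only the matchings along the cycle edges to introduce a single "twist" (as in the standard construction showing $\chi_{DP}(C_n)=3$ and the even-girth phenomenon of Theorem~\ref{thm: evengirth}), and compute $P_{DP}(K_p \vee C_n, \mathcal{H})$ directly via a transfer-matrix or recursive count along the cycle, comparing it against the closed form $P(C_n, m-p) = (m-p-1)^n + (-1)^n(m-p-1)$ evaluated at $m-p = 2$.

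\begin{comment}
Note for self: the strict inequality at $m=2+p$ should follow because the twisted $2$-fold cover on $C_n$ counts $\mathcal{H}'$-colorings strictly below $P(C_n,2) = 1 + (-1)^n$; indeed for the even-cycle analogue this is where the even-girth deficit of Theorem~\ref{thm: evengirth} enters, and for odd $n$ one instead uses that a non-full or twisted $2$-fold cover of $C_n$ can have $0$ colorings.
\end{comment}
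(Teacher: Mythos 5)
Your upper bound is exactly the paper's argument: Theorem~\ref{thm: wheel} supplies the base case $p=1$, and iterating Theorem~\ref{thm: monoinp} gives $P_{DP}(K_p \vee C_n, m) = P(K_p \vee C_n, m)$ for all $m \geq 3+p$. Your lower-bound construction for \emph{even} $n$ is also essentially the paper's: the paper takes the canonical $(2+p)$-fold cover of $K_p \vee C_{2k+2}$ and replaces the matching on the single cycle edge $v_1v_{2k+2}$ by a cyclic shift, then counts completions over the $(2+p)!/2$ choices of pairwise nonadjacent vertices in the clique lists. Each choice leaves two ``colors'' free on the cycle and so extends to at most $2$ colorings (the two alternating transversals), while the particular choice $(w_1,1),\ldots,(w_p,p)$ extends to at most one, because the shifted edge makes one of the two alternating transversals non-independent; hence $P_{DP}(K_p \vee C_{2k+2}, 2+p) < (2+p)! = P(K_p \vee C_{2k+2}, 2+p)$. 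Your plan (canonical labeling plus one twist on a cycle edge, then a direct count of the induced $2$-fold covers on the cycle) is the same idea, and your worry about factorization is resolved exactly as you suggest: one constructs the cover explicitly rather than reasoning about an optimal cover.

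The genuine gap is the odd case. You state that you ``must exhibit, for $m = 2+p$, a cover with $P_{DP}(K_p \vee C_n, \mathcal{H}) < P(K_p \vee C_n, m)$'' for all $n \geq 3$, and your closing note proposes handling odd $n$ with a twisted $2$-fold cover of $C_n$ having $0$ colorings. This cannot work: for odd $n$ we have $\chi(K_p \vee C_n) = 3+p$, so $P(K_p \vee C_n, 2+p) = 0$; since $0 \leq P_{DP}(K_p \vee C_n, 2+p) \leq P(K_p \vee C_n, 2+p)$, equality (both sides zero) actually \emph{holds} at $m = 2+p$, and a strict deficit is impossible---a cover with $0$ colorings witnesses nothing. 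For odd $n$ the bound $\tau_{DP}(K_p \vee C_n) \geq 3+p$ does not come from any failure of equality; it is definitional, since $\tau_{DP}(G)$ is by definition the smallest $N \geq \chi(G)$ with the required property, and $\chi(K_p \vee C_n) = 3+p$. This is how the paper handles odd cycles: a separate lemma shows $P_{DP}(K_p \vee C_{2k+1}, m) = P(K_p \vee C_{2k+1}, m)$ for every $m \in \N$, whence $\tau_{DP} = \chi = 3+p$. So your argument as written proves the theorem only for even $n$; for odd $n$ you must replace the strict-inequality step with the definitional observation (plus the equality-for-all-$m$ statement), not a construction.
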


Theorem~\ref{thm: threshold} also demonstrates that the bound on $\tau_{DP}(K_{p+1} \lor G)$ in Theorem~\ref{thm: monoinp} is tight.
  
In Section~\ref{tools}, we build a toolbox for studying $K_p$-gluings of graphs. Given vertex disjoint graphs $G_1, \ldots, G_n$, we define amalgamated cover, a natural analogue of ``gluing'' $m$-fold covers of each $G_i$ together so that we get an $m$-fold cover for $G\in \bigoplus_{i=1}^{n}(G_i,1)$. We define separated covers, a natural analogue of ``splitting'' an $m$-fold cover of $G\in \bigoplus_{i=1}^{n}(G_i,p)$ into separate $m$-fold covers for each $G_i$. These notions help us show that the inequality in Question~\ref{ques: classify} holds in the case that $p=1$.  

\begin{thm}\label{thm: upperbound}
Suppose that $G_1,\ldots,G_n$ are vertex disjoint graphs for some $n \geq 2$ 
and $G \in \bigoplus_{i=1}^{n}(G_i,1)$. Then $$P_{DP}(G,m) \leq \frac{\prod_{i=1}^{n} P_{DP}(G_i,m)}{m^{n-1}}.$$
\end{thm}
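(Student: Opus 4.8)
The plan is to construct, from near-optimal $m$-fold covers of the individual graphs $G_1, \ldots, G_n$, a single $m$-fold cover of $G$ whose number of $\mathcal{H}$-colorings is at most the claimed bound. This is the role of the \emph{amalgamated cover} alluded to in the section outline. Concretely, fix an optimal $m$-fold cover $\mathcal{H}_i = (L_i, H_i)$ of each $G_i$, meaning $P_{DP}(G_i,\mathcal{H}_i) = P_{DP}(G_i,m)$. Let $v$ denote the single glued vertex (the case $p=1$), and let $v_i$ be its copy in $G_i$. The combinatorial obstruction is that the fibers $L_i(v_i)$ over the glued vertex must be consistently identified across all $i$ so that the pieces fit together into one cover of $G$; I would first relabel the vertices of each $H_i$ so that the fiber over $v_i$ is literally $\{v\} \times [m]$, and all other fibers are over the remaining (disjoint) vertices of $G$. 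Taking the disjoint union of the resulting covers and merging the $n$ copies of the fiber $\{v\}\times[m]$ into a single $m$-clique yields a pair $\mathcal{H} = (L,H)$; I would verify the four cover axioms, the only nontrivial point being that at the merged fiber we must not create multi-edges or violate the matching condition, which holds because edges incident to the glued fiber in different $H_i$ live over disjoint edge-sets $E(G_i)$.

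Once $\mathcal{H}$ is a legitimate $m$-fold cover of $G$, the second step is to count $P_{DP}(G,\mathcal{H})$ and show it equals $\prod_i P_{DP}(G_i,m) / m^{n-1}$. The key structural fact is that an $\mathcal{H}$-coloring of $G$ is exactly a choice of vertex in each fiber forming an independent set, and since the only shared fiber is $\{v\}\times[m]$, an $\mathcal{H}$-coloring restricts to an $\mathcal{H}_i$-coloring of each $G_i$, with the restrictions forced to agree on their common value at $v$. Conversely, any collection of $\mathcal{H}_i$-colorings that agree at $v$ glues to an $\mathcal{H}$-coloring. Thus if we write $P_{DP}(G_i,\mathcal{H}_i) = \sum_{j\in[m]} a_{i,j}$, where $a_{i,j}$ is the number of $\mathcal{H}_i$-colorings assigning color $j$ to $v_i$, then
\[
P_{DP}(G,\mathcal{H}) = \sum_{j \in [m]} \prod_{i=1}^{n} a_{i,j}.
\]
From here the final step is purely an inequality: I must bound this sum by $\prod_i \left(\sum_j a_{i,j}\right)/m^{n-1}$.

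The main obstacle is this last inequality, and it does \emph{not} hold for arbitrary fixed covers $\mathcal{H}_i$ — the sum $\sum_j \prod_i a_{i,j}$ can exceed $\prod_i(\sum_j a_{i,j})/m^{n-1}$ if the colorings concentrate on the same color $j$ across all graphs. This tells me the construction needs an extra ingredient: before amalgamating, I would apply independent color permutations to the fibers over $v_i$ in each $\mathcal{H}_i$, or equivalently argue that there \emph{exists} a relabeling making the amalgamation good. The cleanest route is to count in expectation: choose, for each $i$, a uniformly random bijection $\pi_i : [m] \to [m]$ relabeling the fiber $\{v\}\times[m]$, build the corresponding amalgamated cover $\mathcal{H}^{\pi}$, and compute $\E[P_{DP}(G,\mathcal{H}^{\pi})]$. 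Because permuting the glued fiber independently across the $G_i$ randomizes which color classes align, the expected count becomes $\sum_{j}\prod_i a_{i,\pi_i^{-1}(j)}$ averaged over the $\pi_i$, which by linearity and independence equals $m \cdot \prod_i \big(\tfrac{1}{m}\sum_j a_{i,j}\big) = \prod_i P_{DP}(G_i,m)/m^{n-1}$. Since the minimum of $P_{DP}(G,\mathcal{H}^\pi)$ over $\pi$ is at most its average, and $P_{DP}(G,m) \le P_{DP}(G,\mathcal{H}^\pi)$ for every $\pi$, the bound follows. I would double-check that permuting only the glued fiber preserves the four cover axioms (it does, since it is an isomorphism of $H_i$ fixing the partition) so that each $\mathcal{H}^\pi$ remains a valid $m$-fold cover of $G$ with $P_{DP}(G_i,\mathcal{H}^\pi_i)=P_{DP}(G_i,m)$ unchanged.
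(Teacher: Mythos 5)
Your proposal is correct and is essentially the paper's own argument: both fix optimal $m$-fold covers of the $G_i$, form an amalgamated cover over the glued vertex (the paper's Definition~\ref{defn: vertex amalgamation} and Lemma~\ref{lem: upperGen} give exactly your counting formula $\sum_{j}\prod_{i} a_{i,j}$), and then average over the choice of fiber-identification bijections, concluding by pigeonhole that some choice achieves the bound. The only differences are cosmetic: the paper averages over the $m$ cyclic shifts and inducts on $n$ starting from the two-graph case, whereas you average over all tuples of permutations and treat all $n$ graphs simultaneously; both averages evaluate to exactly $\prod_{i=1}^{n} P_{DP}(G_i,m)/m^{n-1}$.
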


Next, we prove the following general result which is helpful for establishing lower bounds on the DP color function of $K_p$-gluings of graphs.

\begin{lem}\label{lem: lowerGen}
Suppose that $G_1,\ldots,G_n$ are vertex disjoint graphs where $n \geq 2$ and $G \in \bigoplus_{i=1}^n(G_i,p)$ where for each $i \in [n]$, $\{u_{i,1},\ldots,u_{i,p}\}$ is a clique in $G_i$ and $G$ is obtained by identifying $u_{1,q},\ldots,u_{n,q}$ as the same vertex $u_q$ for each $q \in [p]$.  Suppose that for each $i \in [n]$, given any $m$-fold cover $\DD_i = (K_i,D_i)$ of $G_i$, $A$ is contained in at least $k_i$ $\DD_i$-colorings of $G_i$ whenever $A \subseteq \bigcup_{q=1}^{p} K_i(u_{i,q})$, $|A \cap K_i(u_{i,q})| = 1$ for each $q \in [p]$, and $A$ is an independent set in $D_i$. Then $$P_{DP}(G,m) \geq \left(\prod_{i=0}^{p-1} (m-i)\right) \left(\prod_{i=1}^{n} k_i\right).$$
\end{lem}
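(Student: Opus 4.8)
The plan is to bound $P_{DP}(G,\HH)$ from below for an \emph{arbitrary} $m$-fold cover $\HH = (L,H)$ of $G$ and then minimize over all such covers. Write the glued clique of $G$ as $\{u_1,\ldots,u_p\}$, where $u_q$ is the identification of $u_{1,q},\ldots,u_{n,q}$. The first thing I would record is the structural fact that makes the count factor: if $i \neq j$ and $v \in V(G_i) \setminus \{u_1,\ldots,u_p\}$, $w \in V(G_j)\setminus\{u_1,\ldots,u_p\}$, then the gluing creates no edge $vw$ in $G$, so by requirement~(3) in the definition of a cover there are no cross-edges of $H$ between $L(v)$ and $L(w)$. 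This is precisely what will let colorings of distinct $G_i$ be chosen independently once the clique is colored. Next I would set up the restrictions: for each $i$, the set $S_i = \{u_1,\ldots,u_p\} \cup (V(G_i)\setminus\{u_{i,1},\ldots,u_{i,p}\})$ induces a copy of $G_i$ in $G$ (identifying $u_q$ with $u_{i,q}$), so restricting $\HH$ to $S_i$ yields an $m$-fold cover $\DD_i = (K_i,D_i)$ of $G_i$ with $K_i(u_{i,q}) = L(u_q)$.

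Then I would count in two stages. First, the number of independent transversals of the clique, i.e.\ the independent sets $A \subseteq \bigcup_{q=1}^{p} L(u_q)$ in $H$ with $|A \cap L(u_q)| = 1$ for each $q$, is exactly the number of $\HH$-colorings of the induced copy of $K_p$, which is at least $P_{DP}(K_p,m)$. Since $K_p$ is chordal, $P_{DP}(K_p,m) = P(K_p,m) = \prod_{i=0}^{p-1}(m-i)$, so there are at least $\prod_{i=0}^{p-1}(m-i)$ such transversals $A$. Second, fix one such $A$. For each $i$ the set $A$ meets the hypotheses of the lemma relative to $\DD_i$: it is independent in $D_i$, is contained in $\bigcup_{q=1}^{p} K_i(u_{i,q})$, and meets each $K_i(u_{i,q})$ exactly once. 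Hence $A$ extends to at least $k_i$ $\DD_i$-colorings of $G_i$. Using the no-cross-edge observation, choosing one $\DD_i$-coloring extending $A$ for each $i$ glues into a single $\HH$-coloring of $G$ extending $A$, and distinct choices yield distinct colorings (they differ on the interior of some $G_i$); thus each $A$ extends to at least $\prod_{i=1}^{n} k_i$ $\HH$-colorings of $G$. Since every $\HH$-coloring of $G$ restricts to exactly one such transversal $A$, partitioning the $\HH$-colorings by their restriction to the clique gives
$$P_{DP}(G,\HH) = \sum_{A} \#\{\HH\text{-colorings extending } A\} \geq \left(\prod_{i=0}^{p-1}(m-i)\right)\left(\prod_{i=1}^{n} k_i\right),$$
and taking the minimum over all $m$-fold covers $\HH$ yields the claim.

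The step I expect to be the main obstacle is the independence argument: I must verify carefully that combining a $\DD_i$-coloring of each $G_i$, all agreeing with the fixed transversal $A$ on the shared clique, really produces a valid $\HH$-coloring of $G$, namely an independent set in $H$ meeting every $L(v)$ exactly once. Independence inside each $G_i$ is given by the $\DD_i$-coloring, agreement on $A$ handles the shared clique vertices, and the only remaining pairs to check are interior vertices of distinct $G_i$, which is exactly where the no-cross-edge observation of requirement~(3) applies. Finally, if some $k_i = 0$ the asserted bound is vacuous, so I may assume each $k_i \geq 1$, which also guarantees that every transversal $A$ genuinely extends to a coloring of $G$.
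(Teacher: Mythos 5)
Your proof is correct and follows essentially the same route as the paper's: split the arbitrary cover of $G$ into restricted covers of the $G_i$ (the paper's ``separated covers''), fix an independent transversal of the clique lists, show the extensions factor as a product over the $G_i$ using the absence of cross-edges between interior lists, and multiply by the number of transversals. The only cosmetic differences are that you invoke $P_{DP}(K_p,m)=P(K_p,m)$ for the transversal count where the paper uses a direct greedy bound, and you prove only injectivity of the gluing map where the paper asserts a bijection (injectivity indeed suffices for the lower bound).
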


In Section~\ref{CycleChordal}, we use Theorem~\ref{thm: upperbound} and Lemma~\ref{lem: lowerGen} to show the following which makes progress on Question~\ref{ques: classify}.

\begin{thm}\label{thm: ChordCycle}
Suppose that $G_1,\ldots,G_n$ are vertex disjoint graphs, and each $G_i$ is either a chordal graph or a cycle. For any $G \in \bigoplus_{i=1}^{n}(G_i,1)$ and $m \in \N$, $$P_{DP}(G,m) = \frac{\prod_{i=1}^{n} P_{DP}(G_i,m)}{m^{n-1}}.$$
\end{thm}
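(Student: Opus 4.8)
The plan is to prove the matching lower bound, since the upper bound $P_{DP}(G,m) \le \prod_{i=1}^n P_{DP}(G_i,m)/m^{n-1}$ is exactly Theorem~\ref{thm: upperbound}. For the lower bound I would invoke Lemma~\ref{lem: lowerGen} in the case $p=1$. There the glued clique of each $G_i$ is the single vertex $u_i := u_{i,1}$ and $\prod_{i=0}^{p-1}(m-i)=m$, so it suffices to produce, for each $i$, a constant $k_i$ with the property that in \emph{every} $m$-fold cover $\DD_i=(K_i,D_i)$ of $G_i$, every single vertex-colour $a \in K_i(u_i)$ lies in at least $k_i$ $\DD_i$-colourings; then Lemma~\ref{lem: lowerGen} gives $P_{DP}(G,m)\ge m\prod_i k_i$. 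The target is $k_i = P_{DP}(G_i,m)/m$, because then $m\prod_i k_i = \prod_i P_{DP}(G_i,m)/m^{n-1}$ and the two bounds coincide. Thus the whole theorem reduces to the local claim: for a chordal graph or a cycle $G_i$, any $m$-fold cover, and any colour $a$ at $u_i$, the number of colourings extending $a$ is at least $P_{DP}(G_i,m)/m$. (When some factor $P_{DP}(G_i,m)$ vanishes the statement is immediate, so I focus on $m\ge 2$.)

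For $G_i$ chordal I would run maximum cardinality search from $u_i$ to obtain a build-up order $u_i = w_1, w_2, \dots, w_t$ in which the already-placed neighbours of each $w_j$ form a clique (equivalently, a perfect elimination ordering whose last-eliminated vertex is $u_i$). Fixing the colour of $u_i$ to be $a$ and colouring $w_2,\dots,w_t$ greedily, the $d_j$ earlier neighbours of $w_j$ each forbid at most one colour through their matchings, so $w_j$ has at least $m-d_j$ admissible colours; hence $a$ extends to at least $\prod_{j=2}^t (m-d_j)$ colourings, regardless of the cover. Since $d_1=0$ and $P(G_i,m)=\prod_{j=1}^t(m-d_j)$, this product equals $P(G_i,m)/m = P_{DP}(G_i,m)/m$ (using $P_{DP}=P$ for chordal graphs), so $k_i = P_{DP}(G_i,m)/m$ for chordal blocks.

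The cycle case is where I expect the real work. Writing $C_n$ with vertices $v_1=u_i,\dots,v_n$ in cyclic order, I would first reduce to full covers: a partial matching only creates colourings, and any partial cover completes to a full one without increasing any count, so a lower bound for full covers suffices. In a full cover each matching is a permutation $\pi_\ell$, and the number of colourings fixing $v_1=a$ is the diagonal entry $[\prod_{\ell=1}^n (J-P_{\pi_\ell})]_{a,a}$, where $J$ is all-ones and $P_\pi$ is the permutation matrix of $\pi$. Since every $J-P_{\pi_\ell}$ fixes $\mathbf{1}$ with eigenvalue $m-1$ and acts as $-P_{\pi_\ell}$ on $\mathbf{1}^\perp$, the product equals $\tfrac{(m-1)^n}{m}J + (-1)^n P_\sigma\!\left(I-\tfrac1m J\right)$ for $\sigma=\pi_n\cdots\pi_1$, and the diagonal entry works out to $\tfrac{(m-1)^n}{m} + (-1)^n\!\left(\delta-\tfrac1m\right)$, where $\delta=1$ if $\sigma(a)=a$ and $\delta=0$ otherwise. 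Summing over $a$ recovers $P_{DP}(C_n,m)$, namely $(m-1)^n-(m-1)$ for odd $n$ and $(m-1)^n-1$ for even $n$ (the latter realised by a derangement $\sigma$), and in either parity the smallest possible diagonal entry is exactly $P_{DP}(C_n,m)/m$, which is a non-negative integer. Hence $k_i = P_{DP}(C_n,m)/m$ for cyclic blocks too. The main obstacle is precisely this uniform bound for cycles: a priori the colourings could concentrate on a few colours at $u_i$, and ruling that out requires the spectral computation above together with the reduction from arbitrary covers to full covers. Once $k_i = P_{DP}(G_i,m)/m$ holds for every block, Lemma~\ref{lem: lowerGen} yields $P_{DP}(G,m)\ge \prod_{i=1}^n P_{DP}(G_i,m)/m^{n-1}$, which together with Theorem~\ref{thm: upperbound} gives the desired equality.
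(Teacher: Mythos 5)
Your proposal is correct, and its global architecture matches the paper's: the upper bound is Theorem~\ref{thm: upperbound}, and the lower bound comes from Lemma~\ref{lem: lowerGen} with $p=1$ once one shows that, in \emph{every} $m$-fold cover of $G_i$, every vertex of $L_i(u_i)$ lies in at least $P_{DP}(G_i,m)/m$ colorings; this combination is precisely the paper's Corollary~\ref{thm: general}. Your chordal argument (perfect elimination ordering ending at $u_i$, then greedy extension giving at least $\prod_j (m-d_j) = P(G_i,m)/m$ completions) is essentially identical to the paper's Lemma~\ref{lem: lowerChord}. Where you genuinely diverge is the cycle case. The paper's Lemma~\ref{lem: lowerCyc} also begins by reducing to full covers, but then relabels the cover via a canonical labeling of the path $C_l - s_1s_n$ and computes $N(r,\HH)$ by inclusion-exclusion on proper colorings of a path with forbidden end colors, splitting into four cases (parity of the cycle, and whether the cross-edge at $(s_1,1)$ lands in the ``same'' color class or not), each evaluated with chromatic polynomials of paths and cycles. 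Your transfer-matrix computation replaces that case analysis with a single identity: the number of colorings extending $(v_1,a)$ equals $\left[\prod_{\ell=1}^{n}(J-P_{\pi_\ell})\right]_{a,a} = \frac{(m-1)^n}{m} + (-1)^n\left(\delta - \frac{1}{m}\right)$, whose minimum over $\delta \in \{0,1\}$ is exactly $P_{DP}(C_n,m)/m$ in both parities. This is case-free, identifies precisely which covers and which colors are extremal, and recovers the value of $P_{DP}(C_n,m)$ as a byproduct (the paper instead imports it from Theorem~\ref{thm: onecycle}, i.e., from~\cite{KM19}). The trade-off is that the paper's route is more elementary, using only counting and inclusion-exclusion, while yours requires the spectral decomposition of $J - P_\pi$ on $\mathrm{span}(\mathbf{1}) \oplus \mathbf{1}^{\perp}$; both are valid proofs of the key per-vertex bound, and both correctly dispose of the degenerate case where some $P_{DP}(G_i,m)=0$ via the upper bound alone.
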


We end with Section~\ref{ConeCycles} by studying vertex-gluings of wheels (equivalently, cones of the disjoint union of cycles). In doing so, we determine the DP color function threshold of all such graphs which makes further progress on Questions~\ref{ques: mono} and~\ref{ques: classify}.

\section{Joins of Graphs} \label{join}

We begin by recalling a basic result and establishing some terminology and notation that will be useful for the remainder of this paper.  The following basic result will be used frequently.

\begin{pro} [\cite{KM19}] \label{pro: tree}
Suppose $T$ is a tree and $\mathcal{H} = (L,H)$ is a full $m$-fold cover of $T$.  Then, $\mathcal{H}$ has a canonical labeling.
\end{pro}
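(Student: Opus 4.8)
The plan is to root the tree and build the canonical labeling by propagating names outward from the root, transferring labels across each edge by means of its perfect matching (available because $\mathcal{H}$ is full) and relying on the absence of cycles to guarantee that no fiber is ever named inconsistently. First I would fix a root $r \in V(T)$ and list the vertices as $r = w_1, w_2, \ldots, w_n$ in a breadth-first (or depth-first) order, so that every $w_i$ with $i \geq 2$ has its unique parent somewhere among $w_1, \ldots, w_{i-1}$. I name the root fiber arbitrarily, setting $L(r) = \{(r,1), \ldots, (r,m)\}$.

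The heart of the argument is an induction along this ordering. Suppose $w_1, \ldots, w_{i-1}$ have been named, and let $u$ be the parent of $w_i$, so $uw_i \in E(T)$ and $u$ is already labeled. Because $\mathcal{H}$ is a \emph{full} cover, the cross-edges $E_H(L(u), L(w_i))$ form a perfect matching between the two fibers; hence each already-named vertex $(u,j)$ has exactly one partner in $L(w_i)$, and I name that partner $(w_i, j)$. Since the matching is perfect, this assigns distinct names to all $m$ vertices of $L(w_i)$, so $L(w_i) = \{(w_i,j) : j \in [m]\}$, and by construction $(u,j)(w_i,j) \in E(H)$ for every $j \in [m]$. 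Carrying this out for $i = 2, \ldots, n$ names every vertex of $H$. To finish I would check the two defining conditions of a canonical labeling: the fiber condition $L(w) = \{(w,j) : j \in [m]\}$ holds for each $w$ by construction, and for the edge condition it suffices to note that every edge of $T$ is a parent–child edge for exactly one of its endpoints, for which the matched-pair condition $(u,j)(w,j)\in E(H)$ was arranged in the inductive step.

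The only real subtlety, and the place where the tree hypothesis does all the work, is well-definedness: I must be sure no fiber is forced to carry two incompatible names. This is automatic here because each non-root vertex is named exactly once, through its single parent edge, so the labeling scheme never imposes a constraint on a given fiber more than once. In a graph containing a cycle this breaks down: composing the perfect matchings around the cycle yields a permutation of $[m]$ that need not be the identity and generally cannot be trivialized by any relabeling of the fibers, which is exactly the phenomenon behind $\chi_{DP}(C_n) = 3$. Thus I expect no genuine obstacle for trees, and the proof reduces to stating the inductive naming procedure carefully and verifying the two canonical-labeling conditions.
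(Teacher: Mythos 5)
Your proof is correct and is essentially the standard argument for this fact (the one given in the cited source \cite{KM19}, which this paper quotes without reproving): root the tree, label one fiber arbitrarily, and propagate names along the perfect matchings of the parent edges, with acyclicity guaranteeing each fiber is named exactly once. Your closing remark correctly identifies why this breaks for cycles, where the composition of matchings around the cycle can be a nontrivial permutation.
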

  
Now, suppose $M = K_1 \lor G$ for some graph $G$.  Whenever $\mathcal{H} = (L, H)$ is a full $m$-fold cover of $M$, we use the following conventions from this point forward unless otherwise noted.  We assume that $w$ is the vertex corresponding to the copy of $K_1$ used to form $M$, and the vertices of $H$ are named so that $L(v) = \{(v,j): j \in [m]\}$ for each $v \in V(M)$ and $E_H(L(w),L(u)) = \{(w,j)(u,j): j \in [m]\}$ whenever $u \in V(G)$ (this is permissible by Proposition~\ref{pro: tree}). When all of these conventions are followed, we let the \emph{cone reduction of $\HH$ by $(w,j)$} be $\HH^{(j)} = (L^{(j)},H^{(j)})$ where $L^{(j)}(v) = L(v) - \{(v,j') : (v,j') \in N_H((w,j))\}$ and $H^{(j)} = H - N_H[(w,j)]$. Note $\mathcal{H}^{(j)}$ is an $(m-1)$-fold cover of $G$, and $\mathcal{H}^{(j)}$ is not necessarily full.  We say that $(w,t) \in L(w)$ is a \textit{level vertex} if $H^{(t)}$ contains precisely $|E(G)|(m-1)$ cross-edges (i.e., $\mathcal{H}^{(t)}$ is full).

The following lemma captures the essence of the proof of Theorem~\ref{thm: monoinp}.

\begin{lem} \label{lem: joinEquality}
Suppose that for some graph $G$ and $m \in \mathbb{N}$, $P_{DP}(G,m)=P(G,m)$. Then, $P_{DP}(K_1 \lor G,m+1)=P(K_1 \lor G,m+1)$.
\end{lem}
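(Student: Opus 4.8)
The plan is to prove the two inequalities $P_{DP}(K_1 \lor G, m+1) \le P(K_1\lor G,m+1)$ and $P_{DP}(K_1\lor G,m+1)\ge P(K_1\lor G,m+1)$ separately. The first is immediate from the general bound $P_{DP}\le P_\ell \le P$ recorded in the introduction, so the real content is the lower bound. For that, write $M = K_1 \lor G$ with universal vertex $w$. I first reduce to full covers: given any $(m+1)$-fold cover $\mathcal{H}$ of $M$, completing every cross-edge matching to a perfect matching produces a full cover $\mathcal{H}'$ with $P_{DP}(M,\mathcal{H}')\le P_{DP}(M,\mathcal{H})$, since adding edges to $H$ can only destroy independent sets. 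Hence it suffices to show $P_{DP}(M,\mathcal{H})\ge P(M,m+1)$ for every full $(m+1)$-fold cover $\mathcal{H}=(L,H)$. Because the edges joining $w$ to $V(G)$ form a star (a tree) on which $\mathcal{H}$ is full, Proposition~\ref{pro: tree} lets me adopt the canonical-labeling conventions set up just above the lemma: $L(v)=\{(v,j):j\in[m+1]\}$, and $E_H(L(w),L(u))=\{(w,j)(u,j):j\in[m+1]\}$ for each $u\in V(G)$.

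Next I partition the $\mathcal{H}$-colorings of $M$ according to the vertex chosen in $L(w)$. If a coloring uses $(w,t)$, then fullness forces $(u,t)$ to be forbidden at every $u\in V(G)$, so the remaining choices are exactly the $\mathcal{H}^{(t)}$-colorings of $G$, where $\mathcal{H}^{(t)}$ is the cone reduction of $\mathcal{H}$ by $(w,t)$. This yields
$$P_{DP}(M,\mathcal{H})=\sum_{t=1}^{m+1}P_{DP}(G,\mathcal{H}^{(t)}).$$
Each $\mathcal{H}^{(t)}$ is an $m$-fold cover of $G$, since each $L(u)$ loses precisely the single vertex $(u,t)$. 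Therefore, by the definition of the DP color function together with the hypothesis $P_{DP}(G,m)=P(G,m)$, we have $P_{DP}(G,\mathcal{H}^{(t)})\ge P_{DP}(G,m)=P(G,m)$ for every $t$. Summing over the $m+1$ values of $t$ gives $P_{DP}(M,\mathcal{H})\ge (m+1)P(G,m)$.

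Finally I identify $(m+1)P(G,m)$ with $P(M,m+1)$. Applying the join formula $P(G\lor K_n,s)=P(K_n,s)P(G,s-n)$ with $n=1$ and $s=m+1$ gives $P(K_1\lor G,m+1)=(m+1)P(G,m)$, which completes the lower bound, and hence the lemma. The step I expect to require the most care is the bijective bookkeeping behind the displayed equation: verifying that a coloring with $w\mapsto(w,t)$ corresponds precisely to an $\mathcal{H}^{(t)}$-coloring of $G$, and confirming that $\mathcal{H}^{(t)}$ is genuinely an $m$-fold cover so that the hypothesis applies to it. Everything else is a direct invocation of Proposition~\ref{pro: tree}, the join formula, or the standard monotonicity of $P_{DP}$ under edge addition.
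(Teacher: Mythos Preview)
Your proof is correct and follows essentially the same approach as the paper: reduce to a full $(m+1)$-fold cover, partition the $\mathcal{H}$-colorings of $M$ by the vertex chosen in $L(w)$, identify each class with the $\mathcal{H}^{(j)}$-colorings of $G$ for the cone reduction $\mathcal{H}^{(j)}$, and then apply the hypothesis $P_{DP}(G,m)=P(G,m)$ termwise before summing to $(m+1)P(G,m)=P(K_1\vee G,m+1)$. The only differences are cosmetic---you spell out the reduction to full covers and the invocation of Proposition~\ref{pro: tree} a bit more explicitly than the paper does.
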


\begin{proof}
Let $M = K_1 \lor G$.  Clearly, $P(M,m+1)=(m+1)P(G,m)$.  We assume that $\mathcal{H} = (L, H)$ is an $(m+1)$-fold cover of $M$ such that $P_{DP}(M, \mathcal{H}) = P_{DP}(M,m+1)$, and we can assume that $\mathcal{H}$ is full since adding edges to a graph can not increase the number of independent sets of a given size.  To prove the desired we must show  $P_{DP}(M, \mathcal{H}) \geq P(M, m+1)$.

Let $\mathcal{I}_j$ be the set of $\mathcal{H}$-colorings containing $(w,j)$. It is clear that $P_{DP}(M,\mathcal{H}) = \sum_{j=1}^{m+1} |\mathcal{I}_j|$. Let $\HH^{(j)}$ be the cone reduction of $\HH$ by $(w,j)$.
Note that $|\mathcal{I}_j| = P_{DP}(G, \mathcal{H}^{(j)})$.  Since $\mathcal{H}^{(j)}$ is an $m$-fold cover of $G$, we know that $P_{DP}(G,m) \leq P_{DP}(G,\mathcal{H}^{(j)})$.  So,
\begin{align*}
    P_{DP}(M,m+1) = \sum_{j=1}^{m+1} |\mathcal{I}^{(j)}| = \sum_{j=1}^{m+1} P_{DP}(G, \mathcal{H}^{(j)}) &\geq \sum_{j=1}^{m+1} P_{DP}(G, m) \\
		&= (m+1)P_{DP}(G, m) \\
		&= (m+1)P(G,m) = P(M,m+1)
\end{align*}
which completes the proof.
\end{proof}

We can now finish the proof of Theorem~\ref{thm: monoinp}.  

\begin{proof}
It is clear that $K_{p+1} \lor G = K_1 \lor (K_p \lor G)$. Suppose $M = K_p \lor G$, and assume $m$ is a fixed integer satisfying $m \geq N$. We have that $P_{DP}(M,m) = P(M,m)$. Then, Lemma~\ref{lem: joinEquality} implies that $P_{DP}(K_1 \lor M, m+1) = P(K_1 \lor M, m+1)$. The desired result immediately follows.
\end{proof}

The next lemma will take care of the situation where the cycle is odd in Theorems~\ref{thm: wheel} and~\ref{thm: threshold}.

\begin{lem}
For any $p, k \in \N$, $P_{DP}(K_p\lor C_{2k+1},m)=P(K_p\lor C_{2k+1},m)$ whenever $m\in\mathbb{N}$.  Consequently, $\tau_{DP}(K_p \vee C_{2k+1}) = \chi(K_p \vee C_{2k+1}) = 3+p$.
\end{lem}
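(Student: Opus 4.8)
The plan is to induct on $p$, taking the known odd-cycle equality as the base case and feeding Lemma~\ref{lem: joinEquality} through the identity $K_{p+1} \vee C_{2k+1} = K_1 \vee (K_p \vee C_{2k+1})$ for the inductive step, while the finitely many small values $m < \chi(K_p \vee C_{2k+1})$ are disposed of by a triviality. Concretely, let $H(p)$ be the statement that $P_{DP}(K_p \vee C_{2k+1}, m) = P(K_p \vee C_{2k+1}, m)$ for every $m \in \mathbb{N}$, where I use the convention $K_0 \vee C_{2k+1} = C_{2k+1}$. Then $H(0)$ is exactly the known fact that odd cycles satisfy $P_{DP}(C_{2k+1}, m) = P(C_{2k+1}, m)$ for all $m$, recorded in the Introduction from~\cite{KM19}.

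For the inductive step I assume $H(p)$ and apply Lemma~\ref{lem: joinEquality} with $G = K_p \vee C_{2k+1}$, once for each $m \in \mathbb{N}$. Since $H(p)$ supplies the hypothesis $P_{DP}(K_p \vee C_{2k+1}, m) = P(K_p \vee C_{2k+1}, m)$ for that $m$ (this remains valid even at the degenerate values where both sides vanish), the lemma yields $P_{DP}(K_{p+1} \vee C_{2k+1}, m+1) = P(K_{p+1} \vee C_{2k+1}, m+1)$. Letting $m$ range over $\mathbb{N}$ gives equality for every $s \geq 2$. The single excluded value $s = 1$ is handled by noting $\chi(K_{p+1} \vee C_{2k+1}) = (p+1) + \chi(C_{2k+1}) = p+4 > 1$, so $P(K_{p+1} \vee C_{2k+1}, 1) = 0$, and then $0 \leq P_{DP}(K_{p+1} \vee C_{2k+1}, 1) \leq P(K_{p+1} \vee C_{2k+1}, 1) = 0$ forces equality at $s=1$ as well. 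Thus $H(p+1)$ holds, and by induction $H(p)$ holds for every $p$.

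For the ``consequently'' clause, I use that the chromatic number of a join is the sum of the chromatic numbers, so $\chi(K_p \vee C_{2k+1}) = p + \chi(C_{2k+1}) = p + 3$. Having established equality for all $m \in \mathbb{N}$, in particular for all $m \geq p+3$, and recalling that $\tau_{DP}$ is by definition the smallest $N \geq \chi$ with equality beyond $N$, the threshold is attained already at $\chi$; hence $\tau_{DP}(K_p \vee C_{2k+1}) = \chi(K_p \vee C_{2k+1}) = 3 + p$.

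Since every ingredient is already in hand, I do not expect a serious obstacle: the argument is a short induction, and the only points demanding care are bookkeeping ones, namely (i) invoking Lemma~\ref{lem: joinEquality} for \emph{every} $m \in \mathbb{N}$ so as to cover all $s \geq 2$ simultaneously, (ii) separately checking the excluded value $s=1$ via $0 \leq P_{DP} \leq P$, and (iii) matching the ``all $m$'' conclusion against the definition of $\tau_{DP}$. An equivalent route is to iterate Theorem~\ref{thm: monoinp} from the odd-cycle equality, which directly gives equality for $m \geq p+1$; the remaining values $m \leq p$ then lie below $\chi(K_p \vee C_{2k+1}) = p+3$ and are again trivial.
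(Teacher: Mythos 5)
Your proof is correct and follows essentially the same route as the paper: an induction on $p$ whose base case is the known odd-cycle equality from~\cite{KM19}, whose inductive step feeds Lemma~\ref{lem: joinEquality} through the identity $K_{p+1} \vee C_{2k+1} = K_1 \vee (K_p \vee C_{2k+1})$, and whose excluded small value of $m$ is settled by $0 \leq P_{DP} \leq P = 0$. The only cosmetic difference is that you start the induction at $p=0$ (with the convention $K_0 \vee C_{2k+1} = C_{2k+1}$), whereas the paper starts at $p=1$ and invokes Theorem~\ref{thm: monoinp} for its basis step.
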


\begin{proof}
Let $G=C_{2k+1}$ for some $k\in \mathbb{N}$.  We know $P_{DP}(G,m) = P(G,m)$ for any $m \in \mathbb{N}$ (see~\cite{KM19}).  Now, we will prove the desired by induction on $p$. When $p=1$, Theorem~\ref{thm: monoinp} implies $P_{DP}(K_1 \lor G,n) = P(K_1 \lor G,n)$ for any $n \geq 2$.  The desired result holds since $0 \leq P_{DP}(K_1 \lor G,1) \leq P(K_1 \lor G,1)=0$.  This completes the basis step.

Now, suppose $p \geq 2$ and the desired result holds for all positive integers less than $p$.  This means $P_{DP}(K_{p-1} \vee G,m) = P(K_{p-1} \vee G,m)$ for any $m \in \mathbb{N}$.  Lemma~\ref{lem: joinEquality} implies $P_{DP}(K_p \vee G,n)=P_{DP}(K_1 \lor (K_{p-1} \vee G),n) = P(K_1 \lor (K_{p-1} \vee G),n) = P(K_p \vee G,n)$ for any $n \in \mathbb{N}-\{1\}$.  The desired result holds since $0 \leq P_{DP}(K_p \lor G,1) \leq P(K_p \lor G,1)=0$.  This completes the induction step.   
\end{proof}

We now turn our attention to cones of even cycles.  In this Section, whenever $G$ is a copy of $C_{2k+2}$, we assume that the vertices of $G$ in cyclic order are $v_1, v_2, \ldots, v_{2k+2}$.

\begin{lem} \label{lem: sethFormula}
Suppose $M = K_1 \lor G$ where $G = C_{2k+2}$ and $k \in \mathbb{N}$.  Suppose $\mathcal{H} = (L, H)$ is an arbitrary $m$-fold cover of $M$ where $m \geq 2$.  Let $s$ be the number of level vertices in $L(w)$.
Also let $p = P(P_{2k+2},m-1) = (m-1)(m-2)^{2k+1}$,\\
$p_1 = P(C_{2k+1}, m-1)/(m-1) = ((m-2)^{2k+1} - (m-2))/(m-1)$, and\\
$p_2 = P(C_{2k+2}, m-1)/((m-1)(m-2)) = ((m-2)^{2k+1} + 1)/(m-1)$.\\
For all $(w,j) \in L(w)$, the following statements hold.\\
(i) If $(w,j)$ is a level vertex, then $(w,j)$ is in at least $(p - (s-1)p_1 - (m-s) p_2)$ $\mathcal{H}$-colorings of $M$.\\
(ii) If $(w,j)$ is not a level vertex, then $(w,j)$ is in at least $(p - sp_1 - (m-2-s)p_2)$ $\mathcal{H}$-colorings of $M$. \\
Consequently, $P_{DP}(M,m)$ is at least
\begin{align*}
&\min_{s \in \{0\} \cup ([m]-\{m-1\})} \left(s(p - (s-1)p_1 - (m-s)p_2) + (m-s)(p - sp_1 - (m-2-s)p_2)\right).
\end{align*}
\end{lem}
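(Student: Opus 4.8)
The plan is to reduce the computation of $P_{DP}(M,\mathcal H)$ to counting colorings of reduced covers of the cycle $G=C_{2k+2}$. Since $P_{DP}(M,\mathcal H)=\sum_{j=1}^{m}|\mathcal I_j|$ where $\mathcal I_j$ is the set of $\mathcal H$-colorings containing $(w,j)$, and $|\mathcal I_j|=P_{DP}(G,\mathcal H^{(j)})$ is the number of colorings of the cone reduction, it suffices to bound below the coloring count of each $(m-1)$-fold cover $\mathcal H^{(j)}$ of $C_{2k+2}$. First I would reduce to the case that $\mathcal H$ is full (adding cross-edges cannot increase the coloring count) and invoke Proposition~\ref{pro: tree} on the spanning star at $w$ to impose the canonical labeling. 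Then each cycle edge $v_iv_{i+1}$ carries a perfect matching encoded by a permutation $\pi_i$ of $[m]$, with composite $\sigma=\pi_{2k+2}\circ\cdots\circ\pi_1$. The crucial structural observation is that $(w,j)$ is a level vertex exactly when $j$ is a common fixed point of every $\pi_i$; consequently the $s$ level colors are fixed by $\sigma$, so $s\le\operatorname{fix}(\sigma)$, and $s\ne m-1$ (a permutation cannot fix exactly $m-1$ of $m$ points, which is why that value is omitted from the minimization).

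The main counting tool I would set up is a ``cut and untwist'' formula for a path cover. For an $(m-1)$-fold cover of the path $v_1\cdots v_{2k+2}$ all of whose matchings are perfect, relabeling the colors at each vertex so that every matching becomes the identity turns the problem into ordinary proper coloring; the number of proper colorings with prescribed end colors is then exactly the corresponding entry of $(J-I)^{2k+1}$ (where $J$ is all-ones and $I$ the identity of order $m-1$), which equals $p_1=[(m-2)^{2k+1}-(m-2)]/(m-1)$ on the diagonal and $p_2=p_1+1$ off it, while $p$ is the total path count. Writing $\rho$ for the composite path permutation, the path count with ends $a,b$ is $p_1$ if $b=\rho(a)$ and $p_2$ otherwise.

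For a level vertex $(w,j)$, $\mathcal H^{(j)}$ is a full cover of the cycle, so I would cut it at any edge $v_{2k+2}v_1$ and sum the path count over the $m-1$ forbidden end-pairs of the cut matching. Exactly $\operatorname{fix}(\sigma)-1$ of these are of the coincident type, giving the count $p-(\operatorname{fix}(\sigma)-1)p_1-(m-\operatorname{fix}(\sigma))p_2$; since $p_2>p_1$ and $\operatorname{fix}(\sigma)\ge s$, this is at least $p-(s-1)p_1-(m-s)p_2$, which is statement (i).

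The hard part is the non-level case (ii), where $\mathcal H^{(j)}$ fails to be full: each edge $i$ with $\pi_i(j)\ne j$ is ``defective,'' missing exactly one cross-edge. I would handle this in two moves. First, add back all but one of the missing cross-edges; by monotonicity this only decreases the coloring count, and it reduces $\mathcal H^{(j)}$ to a cover with a single defective edge while keeping every original common fixed point of the $\pi_i$ a common fixed point of the new matchings. Second, cut the resulting cover at its unique defective edge, so all remaining edges are perfect and the path formula applies; now there are only $m-2$ forbidden end-pairs, and I would check that at least $s$ of them are of the coincident type, since each of the $s$ level colors yields such a pair (it is distinct from $j$ and from the broken preimage of $j$, and is fixed by the relevant composite). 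This gives a count $p-qp_1-(m-2-q)p_2$ with $q\ge s$, which is at least $p-sp_1-(m-2-s)p_2$, namely (ii). Finally I would substitute (i) and (ii) into $P_{DP}(M,\mathcal H)=\sum_j|\mathcal I_j|$, grouping the $s$ level and $m-s$ non-level terms, and minimize over the attainable values $s\in\{0,1,\dots,m-2,m\}$ to obtain the displayed bound. The delicate points to get right are the monotonicity reduction to a single defective edge and the bookkeeping showing $q\ge s$ and $\operatorname{fix}(\sigma)\ge s$, since these are exactly what convert the raw fixed-point counts into the stated $s$-dependence.
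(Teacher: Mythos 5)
Your proposal is correct and follows essentially the same route as the paper's proof: cone reductions at $w$, cutting the reduced cycle cover at a matching (a non-perfect one in the non-level case, after using monotonicity to make every other matching perfect), untwisting the path so that the counts $p$, $p_1$, $p_2$ apply exactly, and lower-bounding the number of coincident forbidden end-pairs by the number of level vertices before summing and minimizing over $s$. The differences are only presentational --- you phrase the untwisting and the bound $q \ge s$ (the paper's $T \ge S$) in permutation language, and you re-derive the fact that $s \ne m-1$ from fixed points of permutations rather than citing it --- so there is no gap.
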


\begin{proof}
Let $\HH^{(j)}$ be the cone reduction of $\HH$ by $(w,j)$. Clearly, the number of $\mathcal{H}$-colorings of $M$ containing $(w,j)$ is equal to the number of $\mathcal{H}^{(j)}$-colorings of $G$ (Note that when $(w,j)$ is not a level vertex, we will assume without loss of generality that $E_{H^{(j)}}(L(v_1),L(v_{2k+2}))$ is not a perfect matching).  Let $H_l = H^{(j)}[\{(v_i,l): i \in [2k+2]\}]$ for each $l \in [m] - \{j\}$.  Let $c_l = 1$ if $H_l = C_{2k+2}$ and $0$ otherwise.  Now, let $S = \sum_{l \in [m] - \{j\}} c_l$.  Notice $S = s-1$ when $(w,j)$ is a level vertex, and $S = s$ when $(w,j)$ is not a level vertex.
We will now rename the vertices in $V(H^{(j)})$ according to the following procedure.

Suppose $J$ is the graph obtained from modifying $H^{(j)}$ as follows. Delete the edges in $E_{H^{(j)}}(L(v_1),L(v_{2k+2}))$.  For each $i \in [2k+1]$, add edges (if needed) so that $E_{H^{(j)}}(L(v_i),L(v_{i+1}))$ is a perfect matching. This completes the construction of $J$.  Notice $V(J) = V(H^{(j)})$.  Since $G - \{v_1v_{2k+2}\}$ is a path, $J$ can be decomposed into $m-1$ paths $D_i$ where $D_i$ is the path containing the vertex $(v_1, i)$ for each $i \in [m]-\{j\}$.  Furthermore, for each $l \in [m]-\{j\}$ and $i \in [2k+2]$, $|V(D_l) \cap L^{(j)}(v_i)| = 1$.  Also, let $(v_i, a_{l,i})$ be the element in $V(D_l) \cap L^{(j)}(v_i)$. Finally, rename each element of $V(H^{(j)})$ so that each $(v_i, a_{l,i})$ is renamed $(v_i^*, l)$.
Let $A$ be the set of vertices of $H^{(j)}$ before the renaming, and $B$ be the set of vertices of $H^{(j)}$ after the renaming.
Let $R: A \rightarrow B$ be the function such that $R((v_i,l))$ is the name of $(v_i, l)$ under the new naming scheme.

Now, the vertices of $H^{(j)}$ are named such that $L^{(j)}(v_i) = \{(v_i^*, l): l \in [m]-\{j\}\}$.  Let $H_l^* = H^{(j)}[\{(v_i^*,l): i \in [2k+2]\}]$ for each $l \in [m] - \{j\}$.  Let $b_l = 1$ if $H_l^* = C_{2k+2}$ and $0$ otherwise.  Now, let $T = \sum_{l \in [m] - \{j\}} b_l$.  Notice that if $c_l=1$ for some $l \in [m] - \{j\}$, then $R((v_i, l)) = (v_i^*, l)$ for each $i \in [2k+2]$ which implies that $b_l=1$.  Consequently, $T \geq S$.

We will now prove Statement~(i).  Let $H' = H^{(j)} - E_{H^{(j)}}(L(v_1),L(v_{2k+2}))$. Let $\mathcal{H'} = (L^{(j)}, H')$.  Notice $\mathcal{H'}$ is an $(m-1)$-fold cover of $G - \{v_1v_{2k+2}\}$.  Furthermore, based on the renaming scheme, finding an $H'$-coloring of $G-\{v_1v_{2k+2}\}$ is equivalent to finding a proper $(m-1)$-coloring of $G-\{v_1v_{2k+2}\}$. Consequently, there are $P(P_{2k+2}, m-1) = p$,  $\mathcal{H'}$-colorings of $G - \{v_1v_{2k+2}\}$. It is clear that an $\mathcal{H'}$-coloring $I$ of $G$ is not a $\mathcal{H}$-coloring of $G$ if and only if $I$ contains a vertex in $L^{(j)}(v_1)$ and a vertex in $L^{(j)}(v_{2k+2})$ that are adjacent in $H^{(j)}$.  Notice that $(v_1^*,i)$ and $(v_{2k+2}^*, i)$ are in precisely $P(C_{2k+1}, m-1)/(m-1) = p_1$, $\mathcal{H'}$-colorings of $G - \{v_1v_{2k+2}\}$.  Similarly, notice that $(v_1^*,i)$ and $(v_{2k+2}^*, j)$ where $i \neq j$ are in precisely $P(C_{2k+2}, m-1) / ((m-1)(m-2)) = p_2$, $\mathcal{H'}$-colorings of $G - \{v_1v_{2k+2}\}$.  Clearly, $p_2 > p_1$.  Note that $E_{H^{(j)}}(L(v_1),L(v_{2k+2}))$ has precisely $T$ edges that connect two vertices with the same second coordinate and $m-1-T$ edges that connect two vertices with different second coordinates.  The number of $\mathcal{H}^{(j)}$-colorings of $G$ is $p - Tp_1 - (m - 1 - T)p_2$.
Since $T \geq S = s-1$ and $p_2 \geq p_1$, it follows that
\begin{align*}
    p - Tp_1 - (m - 1- T)p_2 = p + T(p_2 - p_1) - (m-1)p_2 &\geq p + (s-1)(p_2 - p_1) - (m-1)p_2\\
    &= p - (s-1)p_1 - (m-s)p_2.
\end{align*}
Now we turn our attention to Statement~(ii).  Following the same idea as the proof of Statement~(i), note that $E_{H^{(j)}}(L(v_1),L(v_{2k+2}))$ has precisely $T$ edges that connect two vertices with the same second coordinate.  Since $(w,j)$ is a non-level vertex and we assumed that $E_{H^{(j)}}(L(v_1),L(v_{2k+2}))$ is not a perfect matching, there are at most $m-2-T$ edges in $E_{H^{(j)}}(L(v_1),L(v_{2k+2}))$ that connect two vertices with different second coordinates.  The number of $\mathcal{H}^{(j)}$-colorings of $G$ is at least $p - Tp_1 - (m - 2 - T)p_2$.  Since $T \geq S = s$ and $p_2 \geq p_1$, it follows that
\begin{align*}
    p - Tp_1 - (m - 2 - T)p_2 = p + T(p_2 - p_1) - (m - 2)p_2 &\geq p + s(p_2 - p_1) - (m-2)p_2\\
    &= p - sp_1 - (m - 2 - s)p_2.
\end{align*}

Finally, the fact that  $P_{DP}(M,m)$ is at least
\begin{align*}
&\min_{s \in \{0\} \cup ([m]-\{m-1\})} \left(s(p - (s-1)p_1 - (m-s)p_2) + (m-s)(p - sp_1 - (m-2-s)p_2)\right)
\end{align*}
immediately follows from Statements~(i) and~(ii) and the fact the number of level vertices of any $m$-fold cover of $M$ must be in the set $\{0\} \cup ([m]- \{m-1\})$ (see~\cite{KM19}).
\end{proof}

We are now ready to complete the proof of Theorem~\ref{thm: wheel} by proving it for cones of even cycles.

\begin{proof}
Suppose $m \geq 4$, and $M = K_1 \lor G$ where $G = C_{2k+2}$. To apply Lemma~\ref{lem: sethFormula}, we need to compute:
\begin{align*}
    &s(p - (s-1)p_1 - (m-s)p_2) + (m-s)(p - sp_1 - (m-2-s)p_2)\\
    &= s(p_1(1 - m) + p_2(m - 2)) + (mp - m^2p_2 + 2mp_2)\\
    &= s\left((m-2)-(m-2)^{2k+1} + \frac{(m-2)^{2k+2} + (m-2)}{m-1}\right) + mp - m^2p_2 + 2mp_2.
\end{align*}
Notice that since $m \geq 4$ and $k \in \mathbb{N}$, $m - (m-2)^{2k} \leq 0$.
So, 
\[
    (m-2)-(m-2)^{2k+1} + \frac{(m-2)^{2k+2} + (m-2)}{m-1} =
    \frac{m(m-2) - (m-2)^{2k+1}}{m-1} \leq 0.
\]
Thus,
\begin{align*}
    &\min_{s \in \{0\} \cap ([m]-\{m-1\})} \left(s(p - (s-1)p_1 - (m-s)p_2) + (m-s)(p - sp_1 - (m-2-s)p_2)\right)\\
    &= m(p - (m-1)p_1) = m((m-2)^{2k+2} + (m-2)) = P(M,m).
\end{align*}
It immediately follows by Lemma~\ref{lem: sethFormula} that $P_{DP}(M,m) = P(M,m)$ whenever $m \geq 4$.

Now, we will show that $P_{DP}(M,3) = 3$.  For each $v \in V(M)$, let $L(v) = \{(v,j): j \in [3]\}$.  Let $H$ be the graph with vertex set $\bigcup_{v \in V(M)} L(v)$.  We will now describe the construction of edges in $H$.  Create edges so that $H[L(v)]$ is a complete graph for each $v \in V(M)$. Whenever $uv \in E(M) - \{v_1v_{2k+2}\}$, create an edge between $(u,j)$ and $(v,j)$ for each $j \in [3]$.  Finally, create the edges $(v_1, 1)(v_{2k+2},2)$, $(v_1, 2)(v_{2k+2},3)$, and $(v_1, 3)(v_{2k+2},1)$.  

We claim for each $j \in [3]$, $(w,j)$ is in exactly one $\mathcal{H}$-coloring of $M$.  We will prove this for $(w,1)$ (a similar argument holds for $(w,2)$ and $(w,3)$).  Suppose $I$ is an $\mathcal{H}$-coloring of $M$ containing $(w,1)$.  Then, $I - \{(w,1)\}$ is an independent set of size $2k+2$ in $H[\{(v_i,j): i \in [2k+2], j \in \{2,3\}\}$].  It is easy to verify that the only independent set of size $2k+2$ in $H[\{(v_i,j): i \in [2k+2], j \in \{2,3\}\}]$ is $\{(v_i,3): i \in \{1,3,\ldots, 2k+1\}\} \cup \{(v_i,2): i \in \{2,4,\ldots, 2k+2\}\}$.  Consequently, it must be that $I = \{(v_i,3): i \in \{1,3,\ldots, 2k+1\}\} \cup \{(v_i,2): i \in \{2,4,\ldots, 2k+2\}\} \cup \{(w,1)\}$ which means $(w,1)$ is in exactly one $\mathcal{H}$-coloring of $M$.  From this, it follows that $P_{DP}(M,3) \leq P_{DP}(M,\mathcal{H}) = 3$.  

To show that $P_{DP}(M,3) \geq 3$, we will use Lemma~\ref{lem: sethFormula}.  Notice that when $m = 3$ and $k \in \mathbb{N}$, $(m-2)-(m-2)^{2k+1} + ((m-2)^{2k+2} + (m-2))/{(m-1)} =
1 \geq 0$.
Thus,\\ 
$\min_{s \in \{0,1,3\})} \left(s(p - (s-1)p_1 - (3-s)p_2) + (3-s)(p - sp_1 - (3-2-s)p_2)\right)= 3(p - p_2) = 3.$
So, $P (M,3) \geq 3$.  Finally, $P_{DP}(M, m) = 0$ when $m \in [2]$ follows from the fact that $M$ contains a cycle.
\end{proof}

Notice that Theorem~\ref{thm: wheel} gives us the result in Theorem~\ref{thm: threshold} when $p=1$.  To complete the proof of Theorem~\ref{thm: threshold}, we will now show that $\tau_{DP}(K_p \vee C_{2k+2}) = 3 +p$ when $p \geq 2$ and $k \in \N$.

\begin{proof}
Let $M = K_p \lor G$, where $G = C_{2k+2}$ and $p \geq 2$.  Let the vertices in $M$ that correspond to $G$ be $v_1,v_2,\ldots,v_{2k+2}$ in cyclic order, and let the vertices corresponding to the copy of $K_p$ used to form $M$ be $w_1,w_2,\ldots,w_p$.  It is easy to see that $P(K_p\lor C_{2k+2},2+p)=(2+p)!$.  Also, a simple induction on $p$ that utilizes Theorems~\ref{thm: monoinp} and~\ref{thm: wheel} shows that $\tau_{DP}(K_p \lor G) \leq 3+p$.

We will now demonstrate $\tau_{DP}(M) > (2 + p)$ by showing $P_{DP}(M,2+p) < P(M,2+p) = (2+p)!$.  For each $v \in V(M)$, let $L(v) = \{(v,j): j \in [2+p]\}$.  Let $H$ be the graph with vertex set $\bigcup_{v \in V(M)} L(v)$.  We will now describe the construction of edges in $H$.  Create edges so that $H[L(v)]$ is a complete graph for each $v \in V(M)$.  Whenever $uv \in E(M) - \{v_1v_{2k+2}\}$, create an edge between $(u,j)$ and $(v,j)$ for each $j \in [2+p]$.  Finally, create the edges $(v_1, 1)(v_{2k+2},2)$, $(v_1, 2)(v_{2k+2},3), \ldots, (v_1, 1+p)(v_{2k+2}, 2+p),$ and $(v_1, 2+p)(v_{2k+2},1)$. Then $\mathcal{H}=(L,H)$ is a $(2+p)$-fold cover of $M$.

We claim that each selection $(w_1,j_1),(w_2,j_2),\ldots,(w_p,j_p)$ of nonadjacent vertices from $L(w_1), L(w_2), \ldots, L(w_p)$ respectively are in at most 2 $\mathcal{H}$-colorings of $M$.  To see this, consider such a selection $(w_1,j_1),(w_2,j_2),\ldots,(w_p,j_p)$ of nonadjacent vertices.  By construction, for each $i \in [p]$, $(w_i,j_i)$ is adjacent to $(v_n,j_i)$ for every $n\in[2k+2]$.  Then, the only remaining vertices in $\bigcup_{v \in V(G)} L(v)$ not adjacent to a chosen vertex are those with second coordinate in $[2+p]-\{j_1,j_2\ldots,j_p\}$.  This set has 2 elements, since $j_1,j_2,\ldots,j_p$ must be pairwise distinct.  Suppose $[2+p]-\{j_1,j_2\ldots,j_p\} = \{\alpha_1, \alpha_2 \}$.  Then, the set of vertices in $\bigcup_{v \in V(G)} L(v)$ nonadjacent to each $(w_1,j_1),(w_2,j_2),\ldots,(w_p,j_p)$ is $\{(v_j,\alpha_1) : j\in[2k+2]\}\cup\{(v_j,\alpha_2):j\in[2k+2]\}$.  Note $(v_j,\alpha_1)(v_{j+1},\alpha_1), (v_j,\alpha_2)(v_{j+1},\alpha_2) \in E(H)$ for every $j \in [2k+1]$, and $(v_j,\alpha_1)(v_j,\alpha_2) \in E(H)$ for every $j\in[2k+2]$.

So, there are clearly at most 2 ways to choose vertices from $\bigcup_{v \in V(G)} L(v)$ so as to complete an $\mathcal{H}$-coloring of $M$ (the 2 possible ways to do this are to choose the elements in $\{(v_1,\alpha_1),(v_2,\alpha_2),(v_3,\alpha_1),\ldots,(v_{2k+2},\alpha_2) \}$ or choose the elements in \\ $\{(v_1,\alpha_2),(v_2,\alpha_1),(v_3,\alpha_2),\ldots,(v_{2k+2},\alpha_1) \}$).

Notice that there are $(2+p)!/2$ ways to choose $p$ nonadjacent vertices from $\bigcup_{i=1}^p L(w_i)$.  So, if we can demonstrate that some selection of $p$ nonadjacent vertices from $\bigcup_{i=1}^p L(w_i)$ is in at most one $\mathcal{H}$-coloring of $M$, we will have $P_{DP}(G,\mathcal{H})<(2+p)!$ and our proof will be complete. 

Consider the selection $(w_1,1),(w_2,2),\ldots,(w_p,p)$ of nonadjacent vertices from \\ $L(w_1), L(w_2), \ldots, L(w_p)$ respectively.  Since $\{(v_1,p+1),(v_2,p+2),(v_3,p+1),\ldots,(v_{2k+2},p+2) \}$ is not an independent set in $H$, there is at most one $\mathcal{H}$-coloring that contains \\ $\{(w_1,1),(w_2,2),\ldots,(w_p,p) \}$.  This completes the proof.
\end{proof}

\section{Gluings of Graphs} \label{cliquesum}

Throughout this Section whenever we have an $m$-fold cover $\HH = (L,H)$ of some graph $G$, we assume that $L(x) = \{(x,j) : j \in [m]\}$ for each $x \in V(G)$ unless otherwise noted. 

\subsection{A Toolbox for Gluings of Graphs}\label{tools}

We begin by establishing some useful terminology and notation.  Whenever $\HH = (L,H)$ is an $m$-fold cover of $G$ and $S \subseteq V(H)$, we let $N(S,\HH)$ be the number of $\HH$-colorings containing $S \subseteq V(H)$. When $S = \{s\}$, we write $N(s,\HH)$.  The next two definitions are crucial for the results we present in this Section. First definition gives a natural way of splitting a cover of a $G \in \bigoplus_{i=1}^{n}(G_i,p)$ into corresponding covers for each $G_i$. The second definition shows how to combine covers of $G_i$ into a cover for a $G \in \bigoplus_{i=1}^{n}(G_i,1)$.

\begin{defn}\label{defn: separation}
For some $n \geq 2$, suppose that $G_1,\ldots,G_n$ are vertex disjoint graphs such that $\{u_{i,1},\ldots,u_{i,p}\}$ is a clique in $G_i$ for each $i \in [n]$ and some $p \in \N$. Let $G$ be the graph obtained by identifying $u_{1,q},\ldots,u_{n,q}$ as the same vertex $u_q$ for each $q \in [p]$. Suppose $\HH = (L,H)$ is an arbitrary $m$-fold cover of $G$.  For each $i \in [n]$, \textbf{the separated cover of $G_i$ obtained from $\HH$} is an $m$-fold cover $\HH_i = (L_i,H_i)$ of $G_i$ defined as follows. Let $L_i(x) = \{(x,j) : j \in [m]\}$ for each $x \in V(G_i)$. Construct edges of $H_i$ so that $H_i$ is isomorphic to $H_i' = H[(\bigcup_{x \in V(G_i) - \{u_{i,q} : q \in [p]\}} L(x)) \cup (\bigcup_{q=1}^{p} L(u_q))]$ and $f: V(H_i') \rightarrow V(H_i)$ given by
\[ f((x,j)) = \begin{cases}
    (x,j) & \text{if } x \in V(G_i) - \{u_{i,q} : q \in [p]\}\\
    (u_{i,q},j) & \text{if } x = u_q
   \end{cases}
\]
is a graph isomorphism.
\end{defn}

\begin{defn}\label{defn: vertex amalgamation}
Suppose that $G_1,\ldots,G_n$ are vertex disjoint graphs graphs for some $n \geq 2$. Suppose that $m \in \N$ and that $\HH_i = (L_i,H_i)$ is an $m$-fold cover of $G_i$. Suppose that $u_i \in V(G_i)$ for each $i \in [n]$ and $f_{k+1}: L_1(u_1) \rightarrow L_{k+1}(u_{k+1})$ is a bijection for each $k \in [n-1]$. Let $F = (f_2,\ldots,f_n)$, and let $G$ be the graph obtained by identifying $u_1,\ldots,u_n$ as the same vertex $u$. \textbf{The $F$-amalgamated $m$-fold cover of $G$ obtained from $\HH_1,\ldots,\HH_n$} is an $m$-fold cover $\HH = (L,H)$ of $G$ defined as follows. (In the special case where $n = 2$, we may also say $f_2$-amalgamated $m$-fold cover of $G$ obtained from $\HH_1$ and $\HH_2$.)  For each $i \in [n]$, assume $L_i(x) = \{(x,j): j \in [m]\}$ for each $x \in V(G_i)$.  Let $L(x) = \{(x,j): j \in [m]\}$ for each $x \in V(G)$. Let $X_i$ be the set of edges in $H_i$ that are incident to at least one element in $L_i(u_i)$. Construct edges in $H$ so that $H[L(u)]$ is a complete graph and $H$ contains the edges in $\bigcup_{i=1}^{n} (E(H_i) -  X_i)$. Then for each $s \in [m]$, whenever $(x,r)(u_1,s) \in E(H_1)$ where $(x,r) \notin L_1(u_1)$, construct the edge $(x,r)(u,s)$ in $H$. Finally, for each $s \in [m]$ and $i \in [n-1]$, whenever $(x,r)f_{i+1}((u_1,s)) \in E(H_{i+1})$ where $(x,r) \notin L_{i+1}(u_{i+1})$, construct the edge $(x,r)(u,s)$ in $H$.
\end{defn}
Below is an illustration of Definition~\ref{defn: vertex amalgamation} where $n=m=2$, $G_1= P_3$, $G_2 = P_3$, $f_2((u_1,1))=(u_2,2)$, and $f_2((u_1,2))=(u_2,1)$.
\begin{center}
\begin{tikzpicture}
    \coordinate (v1) at (-5,0);
    \coordinate (v2) at (-3,0);
    \coordinate (v7) at (-3,1);
    \coordinate (v3) at (-1,0);
    \coordinate (v4) at (1,0);
    \coordinate (v5) at (3,0);
    \coordinate (v8) at (3,1);
    \coordinate (v6) at (5,0);
    
    \draw[fill=black] (v1) circle[radius=1.5pt] node[below=3pt,scale=0.9] {$v_1$};
    \draw[fill=black] (v2) circle[radius=1.5pt] node[below=3pt,scale=0.9] {$v_2$};
    \draw[fill=black] (v7) circle[radius=0pt] node[below=3pt,scale=1.2] {$G_1$};
    \draw[fill=black] (v3) circle[radius=1.5pt] node[below=3pt,scale=0.9] {$u_1$};
    \draw[fill=black] (v4) circle[radius=1.5pt] node[below=3pt,scale=0.9] {$u_2$};
    \draw[fill=black] (v8) circle[radius=0pt] node[below=3pt,scale=1.2] {$G_2$};
    \draw[fill=black] (v5) circle[radius=1.5pt] node[below=3pt,scale=0.9] {$v_4$};
    \draw[fill=black] (v6) circle[radius=1.5pt] node[below=3pt,scale=0.9] {$v_5$};
    
    \draw (v1) -- (v2);
    \draw (v2) -- (v3);
    \draw (v4) -- (v5);
    \draw (v5) -- (v6);
\end{tikzpicture}

\begin{tikzpicture}
    \coordinate (v11) at (-5,0);
    \coordinate (v12) at (-5,0.5);
    \coordinate (v21) at (-3,0);
    \coordinate (v22) at (-3,0.5);
    \coordinate (v7) at (-3,2.5);
    \coordinate (v31) at (-1,0);
    \coordinate (v32) at (-1,0.5);
    \coordinate (v9) at (0,1.5);
    \coordinate (v41) at (1,0);
    \coordinate (v42) at (1,0.5);
    \coordinate (v51) at (3,0);
    \coordinate (v52) at (3,0.5);
    \coordinate (v8) at (3,2.5);
    \coordinate (v61) at (5,0);
    \coordinate (v62) at (5,0.5);
    
    \draw[fill=gray!30] (-5,0.25) ellipse (8mm and 12mm);
    \draw[fill=gray!30] (-3,0.25) ellipse (8mm and 12mm);
    \draw[fill=gray!30] (-1,0.25) ellipse (8mm and 12mm);
    \draw[fill=gray!30] (1,0.25) ellipse (8mm and 12mm);
    \draw[fill=gray!30] (3,0.25) ellipse (8mm and 12mm);
    \draw[fill=gray!30] (5,0.25) ellipse (8mm and 12mm);
    
    \draw[fill=black] (v12) circle[radius=1.5pt] node[above=3pt,scale=0.9] {$(v_1,1)$};
    \draw[fill=black] (v11) circle[radius=1.5pt] node[below=3pt,scale=0.9] {$(v_1,2)$};
    \draw[fill=black] (v22) circle[radius=1.5pt] node[above=3pt,scale=0.9] {$(v_2,1)$};
    \draw[fill=black] (v21) circle[radius=1.5pt] node[below=3pt,scale=0.9] {$(v_2,2)$};
    \draw[fill=black] (v7) circle[radius=0pt] node[below=3pt,scale=1.2] {$\HH_1$};
    \draw[fill=black] (v32) circle[radius=1.5pt] node[above=3pt,scale=0.9] {$(u_1,1)$};
    \draw[fill=black] (v31) circle[radius=1.5pt] node[below=3pt,scale=0.9] {$(u_1,2)$};
    \draw[fill=black] (v9) circle[radius=0pt] node[below=3pt,scale=1.2] {$f_2$};
    \draw[fill=black] (v42) circle[radius=1.5pt] node[above=3pt,scale=0.9] {$(u_2,1)$};
    \draw[fill=black] (v41) circle[radius=1.5pt] node[below=3pt,scale=0.9] {$(u_2,2)$};
    \draw[fill=black] (v52) circle[radius=1.5pt] node[above=3pt,scale=0.9] {$(v_4,1)$};
    \draw[fill=black] (v51) circle[radius=1.5pt] node[below=3pt,scale=0.9] {$(v_4,2)$};
    \draw[fill=black] (v8) circle[radius=0pt] node[below=3pt,scale=1.2] {$\HH_2$};
    \draw[fill=black] (v62) circle[radius=1.5pt] node[above=3pt,scale=0.9] {$(v_5,1)$};
    \draw[fill=black] (v61) circle[radius=1.5pt] node[below=3pt,scale=0.9] {$(v_5,2)$};
    
    \draw (v11) -- (v12);
    \draw (v21) -- (v22);
    \draw (v31) -- (v32);
    \draw (v41) -- (v42);
    \draw (v51) -- (v52);
    \draw (v61) -- (v62);
    
    \draw (v11) -- (v21);
    \draw (v12) -- (v22);
    \draw (v21) -- (v31);
    \draw (v22) -- (v32);
    \draw[-{Stealth[scale=2.0]}] (v31) -- (v42);
    \draw[-{Stealth[scale=2.0]}] (v32) -- (v41);
    \draw (v41) -- (v51);
    \draw (v42) -- (v52);
    \draw (v51) -- (v61);
    \draw (v52) -- (v62);

\end{tikzpicture}

\begin{tikzpicture}
    \coordinate (v1) at (-4,0);
    \coordinate (v2) at (-2,0);
    \coordinate (v3) at (0,0);
    \coordinate (v6) at (0,1);
    \coordinate (v4) at (2,0);
    \coordinate (v5) at (4,0);
    
    \draw[fill=black] (v1) circle[radius=1.5pt] node[below=3pt,scale=0.9] {$v_1$};
    \draw[fill=black] (v2) circle[radius=1.5pt] node[below=3pt,scale=0.9] {$v_2$};
    \draw[fill=black] (v3) circle[radius=1.5pt] node[below=3pt,scale=0.9] {$u$};
    \draw[fill=black] (v6) circle[radius=0pt] node[below=3pt,scale=1.2] {$G$};
    \draw[fill=black] (v4) circle[radius=1.5pt] node[below=3pt,scale=0.9] {$v_4$};
    \draw[fill=black] (v5) circle[radius=1.5pt] node[below=3pt,scale=0.9] {$v_5$};
    
    \draw (v1) -- (v2);
    \draw (v2) -- (v3);
    \draw (v3) -- (v4);
    \draw (v4) -- (v5);
\end{tikzpicture}

\begin{tikzpicture}
    \coordinate (v11) at (-3,0);
    \coordinate (v12) at (-3,0.5);
    \coordinate (v21) at (-1,0);
    \coordinate (v22) at (-1,0.5);
    \coordinate (v31) at (1,0);
    \coordinate (v32) at (1,0.5);
    \coordinate (v6) at (1,2.5);
    \coordinate (v41) at (3,0);
    \coordinate (v42) at (3,0.5);
    \coordinate (v51) at (5,0);
    \coordinate (v52) at (5,0.5);
    
    \draw[fill=gray!30] (-3,0.25) ellipse (8mm and 12mm);
    \draw[fill=gray!30] (-1,0.25) ellipse (8mm and 12mm);
    \draw[fill=gray!30] (1,0.25) ellipse (8mm and 12mm);
    \draw[fill=gray!30] (3,0.25) ellipse (8mm and 12mm);
    \draw[fill=gray!30] (5,0.25) ellipse (8mm and 12mm);
    
    \draw[fill=black] (v12) circle[radius=1.5pt] node[above=3pt,scale=0.9] {$(v_1,1)$};
    \draw[fill=black] (v11) circle[radius=1.5pt] node[below=3pt,scale=0.9] {$(v_1,2)$};
    \draw[fill=black] (v22) circle[radius=1.5pt] node[above=3pt,scale=0.9] {$(v_2,1)$};
    \draw[fill=black] (v21) circle[radius=1.5pt] node[below=3pt,scale=0.9] {$(v_2,2)$};
    \draw[fill=black] (v32) circle[radius=1.5pt] node[above=3pt,scale=0.9] {$(u,1)$};
    \draw[fill=black] (v31) circle[radius=1.5pt] node[below=3pt,scale=0.9] {$(u,2)$};
    \draw[fill=black] (v6) circle[radius=0pt] node[below=3pt,scale=1.2] {$\HH$};
    \draw[fill=black] (v42) circle[radius=1.5pt] node[above=3pt,scale=0.9] {$(v_4,1)$};
    \draw[fill=black] (v41) circle[radius=1.5pt] node[below=3pt,scale=0.9] {$(v_4,2)$};
    \draw[fill=black] (v52) circle[radius=1.5pt] node[above=3pt,scale=0.9] {$(v_5,1)$};
    \draw[fill=black] (v51) circle[radius=1.5pt] node[below=3pt,scale=0.9] {$(v_5,2)$};
    
    \draw (v11) -- (v12);
    \draw (v21) -- (v22);
    \draw (v31) -- (v32);
    \draw (v41) -- (v42);
    \draw (v51) -- (v52);
    
    \draw (v11) -- (v21);
    \draw (v12) -- (v22);
    \draw (v21) -- (v31);
    \draw (v22) -- (v32);
    \draw (v31) -- (v42);
    \draw (v32) -- (v41);
    \draw (v41) -- (v51);
    \draw (v42) -- (v52);
\end{tikzpicture}
\end{center}

Definition~\ref{defn: vertex amalgamation} allows us to present a result which gives a general method for finding an upper bound on the DP color function of a vertex-gluing of graphs.

\begin{lem}\label{lem: upperGen}
Suppose that $G_1,\ldots,G_n$ are vertex disjoint graphs for some $n \geq 2$ where $u_i \in V(G_i)$ for each $i \in [n]$.  Suppose that $\HH_i = (L_i,H_i)$ is an $m$-fold cover of $G_i$ for each $i \in [n]$. Suppose $f_i: L_1(u_1) \rightarrow L_i(u_i)$ is a bijection for each $2 \leq i \leq n$, and let $F = (f_2,\ldots,f_n)$. Let $G$ be the graph obtained by identifying $u_1,\ldots,u_n$ as the same vertex $u$. Let $D = \sum_{j=1}^m (N((u_1,j),\HH_1) \prod_{i=2}^{n} N(f_i((u_1,j)),\HH_i))$. Then $P_{DP}(G,\HH) = D$ where $\HH$ is the $F$-amalgamated cover of $G$ obtained from $\HH_1,\ldots,\HH_n$. Consequently, $P_{DP}(G,m) \leq D$. 
\end{lem}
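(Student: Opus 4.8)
The plan is to count the $\HH$-colorings of $G$ directly by decomposing each one into an $\HH_i$-coloring of every $G_i$, while tracking how the choices at the glued vertex $u$ must agree across the factors. Recall that an $\HH$-coloring of $G$ is an independent set $I$ in $H$ with $|I \cap L(x)| = 1$ for every $x \in V(G)$; in particular $I$ contains exactly one vertex $(u,s)$ of $L(u)$, and this index $s$ will serve as the bookkeeping parameter. The target identity $P_{DP}(G,\HH)=D$ should then fall out of a bijection between $\HH$-colorings and suitably ``consistent'' tuples of colorings of the pieces.

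Given an $\HH$-coloring $I$ of $G$, I would define for each $i \in [n]$ a candidate $\HH_i$-coloring $I_i$ of $G_i$ as follows. For $x \in V(G_i) \setminus \{u_i\}$ we have $L_i(x) = L(x)$, so let $I_i$ take at $x$ the unique vertex of $I \cap L(x)$; at $u_i$, if $I$ contains $(u,s)$, let $I_1$ take $(u_1,s)$ and, for $i \geq 2$, let $I_i$ take $f_i((u_1,s))$. I would first check that each $I_i$ is genuinely an $\HH_i$-coloring: the condition $|I_i \cap L_i(x)| = 1$ is immediate from the construction, and independence in $H_i$ follows by reading off the edge set of the amalgamated cover in Definition~\ref{defn: vertex amalgamation}. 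Indeed, the edges of $H$ lying inside the $G_i$-block are precisely the edges of $E(H_i) - X_i$ together with the $f_i$-images of the edges of $H_i$ incident to $L_i(u_i)$; hence any edge of $H_i$ joining two vertices chosen by $I_i$ would force a corresponding edge of $H$ joining two vertices of $I$, contradicting that $I$ is independent.

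Conversely, I would reconstruct $I$ from a tuple $(I_1,\ldots,I_n)$ of $\HH_i$-colorings that is \emph{consistent}, meaning there is a common $s \in [m]$ with $(u_1,s) \in I_1$ and $f_i((u_1,s)) \in I_i$ for all $i \geq 2$; set $I$ to contain $(u,s)$ together with the choice of each $I_i$ at every $x \in V(G_i) \setminus \{u_i\}$. The one point requiring care is that the reassembled $I$ is independent in $H$. Here the crucial structural observation is that $H$ has no edges between the $G_i$-block and the $G_{i'}$-block for $i \neq i'$ outside of $L(u)$: vertices in distinct blocks project to non-adjacent vertices of $G$ (their only shared vertex is $u$), so cover axiom (3) forbids such cross-edges. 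Thus independence of $I$ reduces to independence within each block, which holds because each $I_i$ is independent and the consistency condition guarantees the single $u$-vertex is compatible with every block. This establishes that $I \mapsto (I_1,\ldots,I_n)$ is a bijection between $\HH$-colorings of $G$ and consistent tuples.

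It then remains to count consistent tuples. For a fixed $s$, the number of $\HH_1$-colorings with $(u_1,s) \in I_1$ is $N((u_1,s),\HH_1)$, and independently the number of $\HH_i$-colorings with $f_i((u_1,s)) \in I_i$ is $N(f_i((u_1,s)),\HH_i)$ for each $i \geq 2$; since these choices are independent once $s$ is fixed, summing the products over $s \in [m]$ yields exactly $D$. As each $\HH$-coloring determines its index $s$ uniquely, there is no over-counting, so $P_{DP}(G,\HH)=D$, and the final bound $P_{DP}(G,m) \leq D$ is immediate from the definition of $P_{DP}(G,m)$ as a minimum over all $m$-fold covers. I expect the main obstacle to be the bookkeeping in the bijection --- specifically, verifying via the edge list of Definition~\ref{defn: vertex amalgamation} that independence in $H$ transfers both to and from independence in each $H_i$ under the $f_i$-relabeling, and confirming the absence of cross-block edges.
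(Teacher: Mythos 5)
Your proof is correct and takes essentially the same approach as the paper: both arguments decompose the $\HH$-colorings of $G$ according to the chosen vertex $(u,s) \in L(u)$, establish a bijection between the colorings containing $(u,s)$ and tuples of $\HH_i$-colorings containing $(u_1,s)$ (resp. $f_i((u_1,s))$ for $i \geq 2$), and then sum the resulting products over $s$ to obtain $D$. The only difference is cosmetic --- the paper defines the map $g$ from tuples to colorings and declares the independence check easy, whereas you construct the inverse map and spell out the transfer of independence in both directions via the edge list of Definition~\ref{defn: vertex amalgamation}.
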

\begin{proof}
Let $\HH = (L,H)$ be the $F$-amalgamated $m$-fold cover of $G$ obtained from $\HH_1,\ldots,\HH_n$. We will count the number of $\HH$-colorings of $G$ that contain $(u,j)$ for some $j \in [m]$. Note $P_{DP}(G,\HH) = \sum_{j=1}^m N((u,j),\HH)$. Let $\II_1$ be the set of all $\HH_1$-colorings of $G_1$ that contain $(u_1,j)$, $\II_i$ be the set of all $\HH_i$-colorings of $G_i$ that contain $f_i((u_1,j))$ for each $2 \leq i \leq n$, and $\II$ be the set of all $\HH$-colorings of $G$ that contain $(u,j)$ for some $j \in [m]$. Let $g: \II_1 \times \cdots \times \II_n \rightarrow \II$ be the function given by $g((I_1,\ldots,I_n)) = \{(u,j)\} \cup (I_1 - \{(u_1,j)\}) \cup \bigcup_{i=2}^{n} (I_i - \{f_i((u_1,j))\})$. It is easy to check that $g((I_1,\ldots,I_n))$ is an independent set of size $|V(G)|$ in $H$. Also, $g$ is a bijection. As such, $N((u,j),\HH) = |\II| = \prod_{i=1}^{n} |\II_i| = N((u_1,j),\HH_1)\prod_{i=2}^{n} N(f_i((u_1,j)),\HH_i)$ for each $j \in [m]$. Therefore, $P_{DP}(G,\HH) = D$ which implies that $P_{DP}(G,m) \leq D$.
\end{proof}

Lemma~\ref{lem: upperGen} allows us to prove Theorem~\ref{thm: upperbound} which shows that the inequality in Question~\ref{ques: classify} holds in the case that $p=1$.  We now restate Theorem~\ref{thm: upperbound}.  

\begin{customthm} {\ref{thm: upperbound}}
Suppose that $G_1,\ldots,G_n$ are vertex disjoint graphs for some $n \geq 2$ and $G \in \bigoplus_{i=1}^{n}(G_i,1)$. Then $$P_{DP}(G,m) \leq \frac{\prod_{i=1}^{n} P_{DP}(G_i,m)}{m^{n-1}}.$$ 
\end{customthm}

\begin{proof}
Throughout this argument suppose $u_i \in V(G_i)$ for each $i \in [n]$ and $G$ is the graph obtained by identifying $u_1,\ldots,u_n$ as the same vertex $u$.

The proof is by induction on $n$. We begin by proving the result for $n = 2$. Suppose $\HH_i = (L_i,H_i)$ is an $m$-fold cover of $G_i$ such that $P_{DP}(G_i,\HH_i) = P_{DP}(G_i,m)$ and $L_i(u_i) = \{(u_i,j) : j \in \{0\} \cup [m-1]\}$ for each $i \in [2]$. For each $0 \leq j \leq m-1$, let $a_j = N((u_1,j),\HH_1)$ and $b_{j} = N((u_2,j),\HH_2)$.  For each $d \in \{0\} \cup [m-1]$ let $f_d: L_1(u_1) \rightarrow L_2(u_2)$ be defined by $f_d((u_1,j)) = (u_2,j+d)$ where addition is performed mod $m$. Notice $f_d$ is a bijection. For the rest of the argument, whenever we have $b_{j+d}$, addition is performed mod $m$. Let $D_{f_d} = \sum_{j=0}^{m-1} a_jb_{j+d}$. By Lemma~\ref{lem: upperGen}, we know that $P_{DP}(G,m) \leq D_{f_d}$ for each $0 \leq d \leq m-1$.  Notice $P_{DP}(G_1,m) = \sum_{j=0}^{m-1} a_j$ and $P_{DP}(G_2,m) = \sum_{d=0}^{m-1} b_{j+d}$ for each $0 \leq j \leq m-1$. Thus, $$\sum_{d=0}^{m-1} D_{f_d} = \sum_{d=0}^{m-1} \sum_{j=0}^{m-1} a_jb_{j+d} = \sum_{j=0}^{m-1} \sum_{d=0}^{m-1} a_jb_{j+d} = \sum_{j=0}^{m-1} a_j \sum_{d=0}^{m-1} b_{j+d} = P_{DP}(G_1,m)P_{DP}(G_2,m).$$ For some $c \in \{0\} \cup [m-1]$, $$P_{DP}(G,m) \leq D_{f_c} \leq \frac{\sum_{d=0}^{m-1} D_{f_d}}{m} = \frac{P_{DP}(G_1,m)P_{DP}(G_2,m)}{m}.$$

Now suppose $n \geq 3$ and the result holds for all natural numbers greater than $1$ and less than $n$. Let $G'$ be the graph obtained by identifying $u_1,\ldots,u_{n-1}$ as the same vertex $u'$. By the inductive hypothesis, for each $m \in \N$, $$P_{DP}(G',m) \leq \frac{\prod_{i=1}^{n-1} P_{DP}(G_i,m)}{m^{n-2}}.$$ Notice $G$ is the graph obtained by identifying $u'$ and $u_n$ as the same vertex $u$. Thus, by the inductive hypothesis, $$P_{DP}(G,m) \leq \frac{P_{DP}(G',m)P_{DP}(G_n,m)}{m} \leq \frac{\prod_{i=1}^{n} P_{DP}(G_i,m)}{m^{n-1}}.$$
\end{proof}

Using Definition~\ref{defn: separation} we are now ready to prove Lemma~\ref{lem: lowerGen} which gives us a general method for finding a lower bound on the DP color function of $K_p$-gluings of graphs.  We first restate Lemma~\ref{lem: lowerGen} using the notation we have established in this Section.

\begin{customlem} {\ref{lem: lowerGen}}
Suppose that $G_1,\ldots,G_n$ are vertex disjoint graphs where $n \geq 2$ and $G \in \bigoplus_{i=1}^n(G_i,p)$ where for each $i \in [n]$ $\{u_{i,1},\ldots,u_{i,p}\}$ is a clique in $G_i$ and $G$ is obtained by identifying $u_{1,q},\ldots,u_{n,q}$ as the same vertex $u_q$ for each $q \in [p]$. Suppose that for each $i \in [n]$, given any $m$-fold cover $\DD_i = (K_i,D_i)$ of $G_i$, $N(A,\DD_i) \geq k_i$ whenever $A \subseteq \bigcup_{q=1}^{p} K_i(u_{i,q})$, $|A \cap K_i(u_{i,q})| = 1$ for each $q \in [p]$, and $A$ is an independent set in $D_i$. Then $$P_{DP}(G,m) \geq \left(\prod_{i=0}^{p-1} (m-i)\right) \left(\prod_{i=1}^{n} k_i\right).$$
\end{customlem}

\begin{proof}
Suppose that $\HH = (L,H)$ is an arbitrary $m$-fold cover of $G$. For each $i \in [n]$, assume $\HH_i = (L_i,H_i)$ is the separated cover of $G_i$ obtained from $\HH$. Now we will count the number of $\HH$-colorings of $G$ that contain $(u_1,s_1),\ldots,(u_p,s_p) \in V(H)$ where the set containing those elements is an independent set in $H$. For each $i \in [n]$, let $\II_i$ be the set of all $\HH_i$-colorings of $G_i$ that contain $(u_{i,1},s_1),\ldots,(u_{i,p},s_p)$. Let $\II$ be the set of all $\HH$-colorings of $G$ that contain $(u_1,s_1),\ldots,(u_p,s_p)$. Let $P = \{(u_q,s_q) : q \in [p]\}$, and for each $i \in [n]$, let $P_i = \{(u_{i,q},s_q) : q \in [p]\}$. Let $g: \II_1 \times \ldots \times \II_n \rightarrow \II$ be the function given by $g((I_1,\ldots,I_n)) = P \cup \bigcup_{i=1}^{n} (I_i - P_i)$. It is easy to check that $P \cup \bigcup_{i=1}^{n} (I_i - P_i)$ is an independent set of size $|V(G)|$ in $H$. Also, $g$ is a bijection. Thus, $N(P,\HH) = |\II| = \prod_{i=1}^{n} |\II_i| = \prod_{i=1}^{n} N(P_i,\HH_i) \geq \prod_{i=1}^{n} k_i$. Since there are at least $\prod_{i=0}^{p-1} (m-i)$ different ways to form an independent set in $H$ by selecting exactly one element from each of $L(u_1), \ldots, L(u_p)$, $P_{DP}(G,\HH) \geq (\prod_{i=0}^{p-1} (m-i)) (\prod_{i=1}^{n} k_i)$. Since $\HH$ is arbitrary, $P_{DP}(G,m) \geq (\prod_{i=0}^{p-1} (m-i)) (\prod_{i=1}^{n} k_i)$.
\end{proof}

The next two results follow from Theorem~\ref{thm: upperbound} and Lemma~\ref{lem: lowerGen}, and they are helpful for making progress on Question~\ref{ques: classify}.

\begin{cor}\label{cor: lowerGenPretty}
Suppose that $G_1,\ldots,G_n$ are vertex disjoint graphs where $n \geq 2$ and $G \in \bigoplus_{i=1}^n(G_i,p)$ where for each $i \in [n]$, $\{u_{i,1},\ldots,u_{i,p}\}$ is a clique in $G_i$ and $G$ is obtained by identifying $u_{1,q},\ldots,u_{n,q}$ as the same vertex $u_q$ for each $q \in [p]$. Suppose that for each $i \in [n]$, given any $m$-fold cover $\DD_i = (K_i,D_i)$ of $G_i$, $N(A,\DD_i) \geq P_{DP}(G_i,m) / \prod_{i=0}^{p-1} (m-i)$ whenever $A \subseteq \bigcup_{q=1}^{p} L(u_{i,q})$, $|A \cap L(u_{i,q})| = 1$ for each $q \in [p]$, and $A$ is an independent set in $D_i$. Then $$P_{DP}(G,m) \geq \frac{\prod_{i=1}^{n} P_{DP}(G_i,m)}{\left(\prod_{i=0}^{p-1} (m-i)\right)^{n-1}}.$$
\end{cor}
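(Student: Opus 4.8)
The plan is to obtain this immediately from Lemma~\ref{lem: lowerGen}. For each $i \in [n]$ I would set
$$k_i = \frac{P_{DP}(G_i,m)}{\prod_{j=0}^{p-1}(m-j)},$$
where I have renamed the inner product variable to $j$ to avoid a clash with the outer index $i$. With this choice the hypothesis of the corollary, namely that $N(A,\DD_i) \geq P_{DP}(G_i,m)/\prod_{j=0}^{p-1}(m-j)$ for every independent $A$ meeting each $K_i(u_{i,q})$ in exactly one vertex, is word-for-word the hypothesis ``$N(A,\DD_i) \geq k_i$'' required by Lemma~\ref{lem: lowerGen}. (The part-function written ``$L$'' in the corollary's statement is the part-function $K_i$ of the cover $\DD_i = (K_i,D_i)$, matching the lemma.) So the first and essentially only substantive step is to invoke Lemma~\ref{lem: lowerGen} with these $k_i$.

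Applying the lemma then gives
$$P_{DP}(G,m) \geq \left(\prod_{j=0}^{p-1}(m-j)\right)\left(\prod_{i=1}^{n} k_i\right) = \left(\prod_{j=0}^{p-1}(m-j)\right)\frac{\prod_{i=1}^{n} P_{DP}(G_i,m)}{\left(\prod_{j=0}^{p-1}(m-j)\right)^{n}},$$
and cancelling a single factor of $\prod_{j=0}^{p-1}(m-j)$ against the denominator leaves exactly $\prod_{i=1}^{n}P_{DP}(G_i,m)/\left(\prod_{j=0}^{p-1}(m-j)\right)^{n-1}$, which is the claimed bound.

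There is no genuine obstacle here beyond bookkeeping, since the corollary is a direct specialization of Lemma~\ref{lem: lowerGen}; the $k_i$ need not be integers, as they serve only as lower bounds. The one point worth a sentence of care is that the quotient defining $k_i$ is meaningful only when $\prod_{j=0}^{p-1}(m-j) \neq 0$, i.e.\ when $m \geq p$. This is exactly the range of interest, since $G$ contains $K_p$ and hence $P_{DP}(G,m) = 0$ for $m < p$, where the inequality holds trivially; so I would simply restrict to $m \geq p$ before forming the $k_i$.
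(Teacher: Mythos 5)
Your proposal is correct and is exactly the argument the paper intends: the corollary is a direct specialization of Lemma~\ref{lem: lowerGen} with $k_i = P_{DP}(G_i,m)/\prod_{j=0}^{p-1}(m-j)$, followed by cancellation of one factor of $\prod_{j=0}^{p-1}(m-j)$. Your side remarks (renaming the clashing index, reading the corollary's ``$L$'' as $K_i$, noting that the $k_i$ need not be integers, and restricting to $m \geq p$ so the quotient is defined) are all sound and consistent with the paper.
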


\begin{cor}\label{thm: general}
Suppose that $G_1,\ldots,G_n$ are vertex disjoint graphs where $n \geq 2$ and $G \in \bigoplus_{i=1}^n(G_i,1)$ where $u_i \in V(G_i)$ for each $i \in [n]$ and $G$ is the graph obtained by identifying $u_1,\ldots,u_n$ as the same vertex $u$. Also suppose that for each $i \in [n]$ and any $m$-fold cover $\HH_i=(L_i,H_i)$ of $G_i$, $N(s,\HH_i) \geq P_{DP}(G_i,m)/m$ for each $s \in L_i(u_i)$. Then $$P_{DP}(G,m) = \frac{\prod_{i=1}^{n} P_{DP}(G_i,m)}{m^{n-1}}.$$
\end{cor}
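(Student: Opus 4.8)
The plan is to establish the claimed equality by sandwiching $P_{DP}(G,m)$ between matching upper and lower bounds, both of which are already available from the toolbox built up in this section. Since the statement is an equality, I would prove the two inequalities separately and then observe that they coincide.

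For the upper bound I would invoke Theorem~\ref{thm: upperbound} directly. Because $G \in \bigoplus_{i=1}^{n}(G_i,1)$ is a vertex-gluing of the $G_i$, that theorem gives $P_{DP}(G,m) \leq \prod_{i=1}^{n} P_{DP}(G_i,m)/m^{n-1}$ with no further hypotheses required; in particular, the extra assumption of the present corollary plays no role in this direction.

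For the lower bound I would specialize Corollary~\ref{cor: lowerGenPretty} to the case $p = 1$. The key bookkeeping step is to verify that the hypotheses line up. When $p = 1$ we have $\prod_{i=0}^{p-1}(m-i) = m$, and the clique $\{u_{i,1},\ldots,u_{i,p}\}$ collapses to the single vertex $u_i = u_{i,1}$. A set $A$ with $A \subseteq L_i(u_i)$ and $|A \cap L_i(u_i)| = 1$ is then just a singleton $\{s\}$ with $s \in L_i(u_i)$, which is automatically an independent set in $D_i$. Thus the hypothesis of Corollary~\ref{cor: lowerGenPretty}, namely $N(A,\DD_i) \geq P_{DP}(G_i,m)/\prod_{i=0}^{p-1}(m-i)$, reduces to exactly $N(s,\HH_i) \geq P_{DP}(G_i,m)/m$ for every $s \in L_i(u_i)$, which is precisely what we have assumed. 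The conclusion of Corollary~\ref{cor: lowerGenPretty} then reads $P_{DP}(G,m) \geq \prod_{i=1}^{n} P_{DP}(G_i,m)/m^{n-1}$.

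Combining the two bounds yields the desired equality. I do not anticipate any genuine obstacle, since the substantive work lives entirely in Theorem~\ref{thm: upperbound} and Corollary~\ref{cor: lowerGenPretty}; the only care required is the routine translation of the $p = 1$ instance of the corollary's hypothesis, with its product $\prod_{i=0}^{p-1}(m-i)$ collapsing to $m$, into the singleton-vertex language used in the present statement.
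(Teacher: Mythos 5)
Your proposal is correct and matches the paper's intended argument: the paper states that this corollary follows from Theorem~\ref{thm: upperbound} (upper bound) together with Lemma~\ref{lem: lowerGen} (lower bound, via its $p=1$ specialization in Corollary~\ref{cor: lowerGenPretty} with $k_i = P_{DP}(G_i,m)/m$), which is exactly the sandwich you describe. Your bookkeeping check that a singleton $A=\{s\}$ is automatically independent, so the hypotheses align, is the only verification needed, and you carried it out correctly.
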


\subsection{Vertex-gluings of Cycles and Chordal Graphs}\label{CycleChordal}

We are now ready to work towards a proof of Theorem~\ref{thm: ChordCycle} which is an application of Corollary~\ref{thm: general}.  To prove Theorem~\ref{thm: ChordCycle} we need only show that cycles and chordal graphs satisfy the hypotheses of Corollary~\ref{thm: general}.  We begin by showing the cycles satisfy the hypotheses of Corollary~\ref{thm: general}.  We first need a result from~\cite{KM19}.  Recall that a \emph{unicyclic graph} is a connected graph containing exactly one cycle.

\begin{thm} [\cite{KM19}] \label{thm: onecycle}
Let $G$ be a unicyclic graph on $n$ vertices.\\
(i) For $m \in \N$, if $G$ contains a cycle on $2k+1$ vertices, then $P_{DP}(G,m) = P(G,m) = (m-1)^n - (m-1)^{n-2k}$.
\\
(ii)  For $m \geq 2$, if $G$ contains a cycle on $2k+2$ vertices, then \\ $P_{DP}(G,m) = (m-1)^n - (m-1)^{n-2k-2}.$
\end{thm}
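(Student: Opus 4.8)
The plan is to reduce every full cover to a single parameter — a permutation recording how one cycle edge is ``twisted'' — and then optimize over that permutation. First I would observe that deleting edges of $H$ cannot decrease the number of independent sets of a fixed size, so $P_{DP}(G,m)$ is attained at a full $m$-fold cover, and I may assume $\HH = (L,H)$ is full. Let $C = v_1 v_2 \cdots v_\ell$ be the unique cycle of $G$ (with $\ell = 2k+1$ or $\ell = 2k+2$), let $e = v_\ell v_1$, and let $T = G - e$, a spanning tree of $G$. The restriction of $\HH$ to $T$ is a full cover of a tree, so by Proposition~\ref{pro: tree} it has a canonical labeling; after relabeling the vertices of $H$ accordingly, every tree edge is matched by the identity and the single edge $e$ contributes a perfect matching encoded by a permutation $\sigma$ of $[m]$, where $(v_\ell, i)(v_1, \sigma(i)) \in E(H)$. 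Hence every full cover is equivalent to ``canonical tree plus one twist $\sigma$,'' and an $\HH$-coloring of $G$ is exactly a proper $m$-coloring $c$ of $T$ (in the canonical labeling) with $c(v_1) \neq \sigma(c(v_\ell))$.

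Next I would count. Writing $N(a,b)$ for the number of proper $m$-colorings of $T$ with $c(v_\ell)=a$ and $c(v_1)=b$, the number of $\HH$-colorings equals $P(T,m) - \sum_{a \in [m]} N(a,\sigma(a))$, so $P_{DP}(G,m) = P(T,m) - \max_\sigma \sum_{a} N(a,\sigma(a))$. To evaluate $N(a,b)$ I would factor $T$ into the path $v_1 v_2 \cdots v_\ell$ (the cycle with $e$ removed) together with the trees hanging off the cycle vertices: with all cycle-vertex colors fixed, each hanging tree can be colored independently of the others, contributing a uniform factor $(m-1)^{n-\ell}$, while the path with its two endpoints fixed is counted by the $(a,b)$-entry of $(J-I)^{\ell-1}$, where $J$ is the $m \times m$ all-ones matrix and $I$ the identity. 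Since $J-I$ has eigenvalues $m-1$ and $-1$, this entry is $\frac{(m-1)^{\ell-1} + (m-1)(-1)^{\ell-1}}{m}$ on the diagonal and $\frac{(m-1)^{\ell-1} - (-1)^{\ell-1}}{m}$ off it. Writing $N_{\mathrm{diag}}$ and $N_{\mathrm{off}}$ for the two resulting values of $N(a,b)$, the crucial computation is $N_{\mathrm{diag}} - N_{\mathrm{off}} = (-1)^{\ell-1}(m-1)^{n-\ell}$.

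The optimization is then immediate because $\sum_a N(a,\sigma(a)) = t\,N_{\mathrm{diag}} + (m-t)\,N_{\mathrm{off}}$ depends only on the number $t$ of fixed points of $\sigma$ (all off-diagonal values of $N$ being equal), and this is linear in $t$. For an odd cycle $\ell = 2k+1$ we have $(-1)^{\ell-1}=1$, so $N_{\mathrm{diag}} > N_{\mathrm{off}}$ and the sum is maximized at $t = m$, i.e.\ by the identity (canonical) cover; substituting $P(T,m) = m(m-1)^{n-1}$ and simplifying gives $P_{DP}(G,m) = (m-1)^n - (m-1)^{n-2k} = P(G,m)$. For an even cycle $\ell = 2k+2$ we have $(-1)^{\ell-1} = -1$, so $N_{\mathrm{off}} > N_{\mathrm{diag}}$ and the sum is maximized at $t = 0$, i.e.\ by any derangement $\sigma$, which exists precisely when $m \geq 2$; substituting then yields $P_{DP}(G,m) = (m-1)^n - (m-1)^{n-2k-2}$, accounting for both the value and the hypothesis $m \geq 2$.

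I expect the main obstacle to be the bookkeeping in the second step: cleanly separating the hanging-tree contribution from the path contribution and correctly identifying $N(a,b)$ with an entry of $(J-I)^{\ell-1}$, together with the sign analysis of $(-1)^{\ell-1}$ that drives the entire dichotomy between parts (i) and (ii). A secondary point to handle with care is the reduction itself — verifying that after the canonical relabeling of $T$ the only remaining freedom is the single permutation $\sigma$ on $e$, so that minimizing over all full covers genuinely reduces to maximizing a linear function of the number of fixed points of $\sigma$.
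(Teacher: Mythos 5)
Your proposal is correct and complete. One structural point first: this paper does not prove Theorem~\ref{thm: onecycle} at all --- it is imported from~\cite{KM19} as a known result --- so there is no in-paper proof to compare yours against. Judged on its own, your argument is sound: the reduction of an arbitrary full cover of a unicyclic graph to ``canonical spanning tree plus one twist $\sigma$'' is exactly what Proposition~\ref{pro: tree} licenses; the factorization $N(a,b) = (m-1)^{n-\ell}\bigl((J-I)^{\ell-1}\bigr)_{a,b}$ is right, since the hanging trees contribute a factor $(m-1)$ per vertex independently of the colors fixed on the cycle; the key identity $N_{\mathrm{diag}} - N_{\mathrm{off}} = (-1)^{\ell-1}(m-1)^{n-\ell}$ checks out against the eigenvalues $m-1$ and $-1$ of $J-I$; and the linear-in-fixed-points optimization correctly selects the identity permutation for odd $\ell$ (so the canonical cover is worst, giving $P_{DP}(G,m)=P(G,m)$) and any derangement for even $\ell$, whose existence precisely when $m \geq 2$ accounts for the hypothesis in part (ii). The closing algebra in both cases reproduces the stated formulas. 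Your approach is also very much in the spirit of the machinery this paper does develop for neighboring results --- compare Lemma~\ref{lem: sethFormula}, Lemma~\ref{lem: lowerCyc}, and Corollary~\ref{cor: GunjanTwist}, which likewise relabel along a spanning tree, isolate a single twisted matching, and compare same-coordinate versus different-coordinate completions counted by $P(P_j,m)$- and $P(C_j,m)$-type quantities; your transfer-matrix evaluation with $J-I$ is an equivalent, and arguably cleaner, packaging of those counts.
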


\begin{lem}\label{lem: lowerCyc}
Suppose that $G = C_l$ where $l \geq 3$. Also suppose that $\HH = (L,H)$ is an arbitrary $m$-fold cover of $G$ where $m \geq 2$. For each $v \in V(G)$, whenever $r \in L(v)$, $N(r,\HH) \geq P_{DP}(G,m)/m$.
\end{lem}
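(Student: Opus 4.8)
The plan is to reduce to a clean normal form and then evaluate $N(r,\HH)$ exactly. First I would reduce to full covers: if $\HH^{+}$ is any full cover obtained from $\HH$ by enlarging each cross-matching to a perfect matching, then every $\HH^{+}$-coloring is an $\HH$-coloring, so $N(r,\HH^{+}) \le N(r,\HH)$. Since every cover extends to such an $\HH^{+}$, and $P_{DP}(C_l,m)$ does not depend on the chosen cover, it suffices to prove $N(r,\HH)\ge P_{DP}(C_l,m)/m$ when $\HH$ is full. By the vertex-transitivity of $C_l$ I may assume $v=v_1$ and write $r=(v_1,a)$.

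Next I would put the full cover into normal form. Relabeling the colors within each $L(v_i)$ is a cover isomorphism that preserves all coloring counts, so, going around the cycle and relabeling $v_2,\dots,v_l$ in turn (leaving $v_1$, and hence $r$, untouched), I can make the matchings on the edges $v_1v_2,v_2v_3,\dots,v_{l-1}v_l$ all equal to the identity, with the single remaining edge $v_lv_1$ carrying some permutation $\pi$ of $[m]$ (that is, $(v_l,x)(v_1,\pi(x))\in E(H)$). With this normalization, fixing $x_1=a$, an $\HH$-coloring containing $r$ is precisely a sequence $a=x_1,x_2,\dots,x_l$ with $x_{i+1}\ne x_i$ for $1\le i\le l-1$ and $x_l\ne \pi^{-1}(a)$. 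Hence
\[
N(r,\HH)=(m-1)^{l-1}-W_l\!\left(\pi^{-1}(a)\right),
\]
where $W_l(w)$ counts the walks $a=x_1,\dots,x_l$ in $K_m$ with consecutive entries distinct that end at $w$.

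The value $W_l(w)$ depends only on whether $w=a$, and the two cases are the classical closed/open walk counts $\alpha_l=\big((m-1)^{l-1}+(-1)^{l-1}(m-1)\big)/m$ (for $w=a$) and $\beta_l=\big((m-1)^{l-1}+(-1)^{l}\big)/m$ (for $w\ne a$), which I would derive from the one-step recursion $W_{j+1}(w)=(m-1)^{j-1}-W_j(w)$ together with the symmetry among all $w\ne a$. I then split according to whether $a$ is a fixed point of $\pi$ (equivalently $\pi^{-1}(a)=a$). In the fixed-point case this gives $N(r,\HH)=\big((m-1)^l+(-1)^l(m-1)\big)/m=P(C_l,m)/m$, and otherwise $N(r,\HH)=\big((m-1)^l-(-1)^l\big)/m$. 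Comparing each with $P_{DP}(C_l,m)/m$, read off from Theorem~\ref{thm: onecycle} as $\big((m-1)^l-(m-1)\big)/m$ for odd $l$ and $\big((m-1)^l-1\big)/m$ for even $l$, settles all four sub-cases: the desired inequality holds throughout, with equality exactly in the minimizing case for each parity (the fixed-point case for odd $l$, the non-fixed case for even $l$).

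The main obstacle is that the naive greedy bound—delete $v_1$, color the remaining path one vertex at a time, and charge each of $v_2$ and $v_l$ a single forbidden color—is genuinely too weak, failing already for $C_5$ at $m=3$, so an exact evaluation is required rather than a crude estimate. The key enabling step is therefore the normalization to a single twisted edge, which converts the problem into an exact walk-counting identity whose parity-dependent value matches the two DP-color-function formulas exactly.
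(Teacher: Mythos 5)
Your proposal is correct and follows essentially the same strategy as the paper's proof: reduce to a full cover (adding cross-edges cannot increase the count), use the canonical labeling of the cover of the path $C_l - v_1v_l$ so that only the closing edge carries a nontrivial matching, compute $N(r,\HH)$ exactly according to whether that matching fixes the color of $r$, and compare with the formulas for $P_{DP}(C_l,m)$ from Theorem~\ref{thm: onecycle}. The only difference is bookkeeping: you evaluate the count uniformly via walk-counting in $K_m$, whereas the paper computes the same quantity by inclusion-exclusion with chromatic polynomials of paths and cycles, split into cases by parity and by $n=3$ versus $n\ge 4$.
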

\begin{proof}
Suppose that the vertices of $G$ in cyclic order are $s_1,s_2,\ldots , s_n$. We can assume that $\HH$ is full since adding edges can not increase the number of independent sets of a graph with a prescribed size that contain a given vertex.  Let $H' = H - E_H(L(s_1),L(s_n))$. Notice $\HH' = (L,H')$ is an $m$-fold cover of $G'$ where $G' = G - \{s_1s_n\}$. Since $G'$ is a tree, Proposition~\ref{pro: tree} implies $\HH'$ has a canonical labeling.  So, we can assume for each $i \in [n]$, $L(s_i) = \{(s_i,j): j \in [m]\}$, and for each $l \in [n-1]$, $(s_l,j)(s_{l+1},j) \in E(H')$ whenever $j \in [m]$. Let $L''$ be $L$ with its domain restricted to $\{s_2,\ldots , s_n\}$. Let $H'' = H - L(s_1)$. Notice $\HH'' = (L'',H'')$ is an $m$-fold cover of $G''$ where $G'' = G - \{s_1\}$. Without loss of generality, we assume that $r = (s_1,1)$. We know that $(s_2,1) \in N_H((s_1,1))$. Suppose the unique element in $L(s_n) \cap N_H((s_1,1))$ is $(s_n,q)$.

When $n=3$ we consider two cases: (1) $q=1$ and (2) $q \neq 1$. In case (1), $N(r,\HH)$ is the number of $\HH''$-colorings of $G''$ that are disjoint from $\{(s_2,1),(s_3,1)\}$. This implies $N(r,\HH)$ is the number of proper $m$-colorings $f$ of $G''$ satisfying: $s_2 \notin f^{-1}(1)$ and $s_3 \notin f^{-1}(1)$. It is then easy to see that: $N(r,\HH)=(m-1)(m-2) = P_{DP}(G,m)/m$.

In case (2), $N(r,\HH)$ is the number of $\HH''$-colorings of $G''$ that are disjoint from $\{(s_2,1),(s_3,q)\}$. This implies $N(r,\HH)$ is the number of proper $m$-colorings $f$ of $G''$ satisfying: $s_2 \notin f^{-1}(1)$ and $s_3 \notin f^{-1}(q)$. It is then easy to see that: $N(r,\HH) > (m-1)(m-2) = P_{DP}(G,m)/m$.

We will now prove the statement when $n \geq 4$. We will consider two cases: (1) $n$ is even and (2) $n$ is odd. In case (1), $n = 2k + 2$ for some $k \in \N$. In the subcase where $q = 1$, $N(r,\HH)$ is the number of $\HH''$-colorings of $G''$ that are disjoint from $\{(s_2,1),(s_n,1)\}$. This implies $N(r,\HH)$ is the number of proper $m$-colorings $f$ of $G''$ satisfying: $s_2 \notin f^{-1}(1)$ and $s_n \notin f^{-1}(1)$. By the inclusion-exclusion principle we compute~\footnote{For this paper, we will assume that $C_2 = P_2$. Notice that $P_{DP}(C_2,m) = m(m-1)$.}, 
\begin{align*}
N(r,\HH) &= P(P_{2k+1},m) - \left(\frac{P(P_{2k+1},m)}{m} + \frac{P(P_{2k+1},m)}{m} - \frac{P(C_{2k},m)}{m}\right)\\
&= m(m-1)^{2k} - \left(2(m-1)^{2k} - \frac{(m-1)^{2k} +(m-1)}{m}\right)\\
&=\frac{(m-1)^{2k+2}+(m-1)}{m}> \frac{P_{DP}(G,m)}{m}.
\end{align*}

In the subcase where $q \neq 1$, $N(r,\HH)$ is the number of $\HH''$-colorings of $G''$ that are disjoint from $\{(s_2,1),(s_n,q)\}$. This implies $N(r,\HH)$ is the number of proper $m$-colorings $f$ of $G''$ satisfying: $s_2 \notin f^{-1}(1)$ and $s_n \notin f^{-1}(q)$. By the inclusion-exclusion principle we compute, 
\begin{align*}
N(r,\HH) &= P(P_{2k+1},m) - \left(\frac{P(P_{2k+1},m)}{m} + \frac{P(P_{2k+1},m)}{m} - \frac{P(C_{2k+1},m)}{m(m-1)}\right)\\
&= m(m-1)^{2k} - \left(2(m-1)^{2k} - \frac{(m-1)^{2k+1} - (m-1)}{m(m-1)}\right)\\
&=\frac{(m-1)^{2k+2}-1}{m}=\frac{P_{DP}(G,m)}{m}
\end{align*}

In case (2), $n = 2k + 3$ for some $k \in \N$. In the subcase where $q = 1$, $N(r,\HH)$ is the number of $\HH''$-colorings of $G''$ that are disjoint from $\{(s_2,1),(s_n,1)\}$. This implies $N(r,\HH)$ is the number of proper $m$-colorings $f$ of $G''$ satisfying: $s_2 \notin f^{-1}(1)$ and $s_n \notin f^{-1}(1)$. By the inclusion-exclusion principle we compute,
\begin{align*}
N(r,\HH) &= P(P_{2k+2},m) - \left(\frac{P(P_{2k+2},m)}{m} + \frac{P(P_{2k+2},m)}{m} - \frac{P(C_{2k+1},m)}{m}\right)\\
&= m(m-1)^{2k + 1} - \left(2(m-1)^{2k + 1} - \frac{(m-1)^{2k + 1} - (m-1)}{m}\right) \\
&=\frac{(m-1)^{2k+3}-(m-1)}{m}=\frac{P_{DP}(G,m)}{m}.
\end{align*}

In the subcase where $q \neq 1$, $N(r,\HH)$ is the number of $\HH''$-colorings of $G''$ that are disjoint from $\{(s_2,1),(s_n,q)\}$. This implies $N(r,\HH)$ is the number of proper $m$-colorings $f$ of $G''$ satisfying: $s_2 \notin f^{-1}(1)$ and $s_n \notin f^{-1}(q)$. By the inclusion-exclusion principle we compute,
\begin{align*}
N(r,\HH) &= P(P_{2k+2},m) - \left(\frac{P(P_{2k+2},m)}{m} + \frac{P(P_{2k+2},m)}{m} - \frac{P(C_{2k+2},m)}{m(m-1)}\right)\\
&= m(m-1)^{2k + 1} - \left(2(m-1)^{2k + 1} - \frac{(m-1)^{2k+2}+(m-1)}{m(m-1)}\right) \\
&=\frac{(m-1)^{2k+3}+1}{m}> \frac{P_{DP}(G,m)}{m}.
\end{align*}
This completes the proof. \end{proof}

As we turn our attention to chordal graphs, we recall one fact.  A \emph{perfect elimination ordering} for a graph $G$ is an ordering of the elements of $V(G)$, $v_1, v_2, \ldots, v_n$, such that for each vertex $v_i$, the neighbors of $v_i$ that occur after $v_i$ in the ordering form  a clique in $G$.  It is well known that a graph $G$ is chordal if and only if there is a perfect elimination ordering for $G$~\cite{FG65}.

\begin{lem}\label{lem: lowerChord}
Suppose that $G$ is a chordal graph with $n$ vertices. Also suppose $m \geq \chi(G)$ and that $\HH = (L,H)$ is an arbitrary $m$-fold cover of $G$. For each $u \in V(G)$, whenever $r \in L(u)$, $N(r,\HH) \geq P_{DP}(G,m)/m$.
\end{lem}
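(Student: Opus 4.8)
The plan is to prove this by induction on $n = |V(G)|$, peeling off a simplicial vertex different from $u$ at each step and using that $P_{DP}(G,m) = P(G,m)$ for chordal graphs (recorded in Section~\ref{intro}). The base case $n = 1$ is immediate: then $G = K_1$, so $P_{DP}(G,m) = m$, and $r$ lies in exactly one $\HH$-coloring, whence $N(r,\HH) = 1 = P_{DP}(G,m)/m$.

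For the inductive step I would take $n \geq 2$ and write $r = (u,j)$. Since $G$ is chordal it has a perfect elimination ordering, and in fact a chordal graph on at least two vertices always has a simplicial vertex distinct from $u$: when $G$ is complete every vertex is simplicial, and when $G$ is not complete it has two non-adjacent simplicial vertices, at least one of which is not $u$. I would fix such a simplicial vertex $w \neq u$, set $d = d_G(w)$, and note that $N_G(w)$ is a clique $Q$ of size $d$. Letting $G' = G - w$ (chordal, on $n-1$ vertices) and $\HH' = (L',H')$ with $H' = H - L(w)$ gives an $m$-fold cover of $G'$. Because $d+1 \leq \omega(G) \leq \chi(G) \leq m$ the quantity $m-d$ is positive, and since $\chi(G') \leq \chi(G) \leq m$ the inductive hypothesis applies to $G'$.

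The heart of the argument would be to count extensions to $w$. Every $\HH$-coloring of $G$ containing $r$ restricts to an $\HH'$-coloring of $G'$ containing $r$ (possible since $w \neq u$), and conversely each such $\HH'$-coloring $I'$ extends to $w$ by choosing any vertex of $L(w)$ nonadjacent to the chosen copies of the vertices of $Q$. Because $\HH$ is a cover, each vertex of $Q$ sends at most one cross-edge into $L(w)$ by requirement~(4), so at most $d$ vertices of $L(w)$ are forbidden and at least $m-d$ choices remain. Summing over the $\HH'$-colorings of $G'$ that contain $r$ yields $N(r,\HH) \geq (m-d)\,N(r,\HH')$. Applying the inductive hypothesis together with the chordal identity $P_{DP}(G',m) = P(G',m)$ gives
$$N(r,\HH) \geq (m-d)\frac{P(G',m)}{m}.$$
Deleting the simplicial vertex $w$ whose neighborhood is a clique of size $d$ gives $P(G,m) = (m-d)P(G',m)$, so the right-hand side equals $P(G,m)/m = P_{DP}(G,m)/m$, closing the induction.

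I expect the only delicate point to be the extension count: one must invoke the matching condition~(4) in the definition of a cover to guarantee that each neighbor of $w$ rules out at most one copy in $L(w)$, so that at least $m-d$ copies survive no matter how the forbidden copies happen to overlap. The other ingredient is the existence of a simplicial vertex avoiding $u$, which follows from standard facts about chordal graphs and lets the peeling proceed all the way down to $u$.
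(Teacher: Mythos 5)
Your proposal is correct and takes essentially the same approach as the paper: the paper fixes a perfect elimination ordering $v_1,\ldots,v_n$ with $v_n=u$ and greedily builds colorings outward from $r$, using the matching condition of covers (exactly as you do) to get at least $m-\alpha_i$ choices at each vertex, yielding $N(r,\HH) \geq \prod_{i=1}^{n-1}(m-\alpha_i) = P_{DP}(G,m)/m$. Your induction that repeatedly deletes a simplicial vertex distinct from $u$ is just this same peeling argument phrased recursively, with the recursion $P(G,m)=(m-d)P(G-w,m)$ standing in for the explicit product formula $P_{DP}(G,m)=\prod_{i=1}^{n}(m-\alpha_i)$ that the paper cites.
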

\begin{proof}
We are able to construct a perfect elimination ordering $v_1,\ldots,v_n$ where $v_n = u$ (see~\cite{D61}). We let $\alpha_i$ denote the number of neighbors of $v_i$ that occur after $v_i$ in the ordering. Note $\prod_{i=1}^{n} (m - \alpha_i) = P_{DP}(G,m)$ (see~\cite{KM19}), $\alpha_n = 0$, and $\chi (G) = 1 + \max_{i \in [n]}(\alpha_i)$. We will now inductively construct an $\HH$-coloring of $G$. Let $a_n = r$. For each $1 \leq i \leq n-1$ and any $0 \leq j < i$, if $v_{n-i}v_{n-j} \in E(G)$, then there is at most one vertex in $L(v_{n-i})$ that is adjacent to $a_{n-j}$ in $H$. Similarly for each $1 \leq i \leq n-1$ and any $0 \leq j < i$, if $v_{n-i}v_{n-j} \notin E(G)$, then there are no vertices in $L(v_{n-i})$ that are adjacent to $a_{n-j}$ in $H$. Choose some element $a_{n-i}$ from $L(v_{n-i})$ such that $a_{n-i}$ is not adjacent to any element in $\{a_{n-i+1},\ldots,a_n\}$. Clearly, there are at least $m-\alpha_{n-i} \geq 1$ choices for $a_{n-i}$. Thus, $N(r,\HH) \geq \prod_{i=1}^{n-1}(m-\alpha_{n-i}) = \prod_{i=1}^{n-1} (m - \alpha_{i}) = \prod_{i=1}^{n} (m - \alpha_{i})/m = P_{DP}(G,m)/m.$
\end{proof}

Now, Theorem~\ref{thm: ChordCycle} immediately follows from Lemma~\ref{lem: lowerCyc}, Lemma~\ref{lem: lowerChord}, and Corollary~\ref{thm: general}.

\subsection{Cones of the Disjoint Union of Cycles}\label{ConeCycles}

We will now finish the paper by studying cones of the disjoint unions of cycles. We will again utilize the tools from Section~\ref{tools} by thinking of such cones as vertex-gluings of wheels where we identify as the same vertex the universal vertices from each of the wheels. 

Let us establish some notation that we will use throughout the section. Let $G$ be the disjoint union of cycles $C_{k_i}$ for $i\in[n]$, with each $k_i \ge 3$. For each $i \in[n]$, let $M_i$ be a copy of $K_1 \vee C_{k_i}$ where $w_i$ is the universal vertex of $M_i$. Let $M = K_1 \vee G$. We will think of $M$ as the graph obtained from vertex-gluing all $M_i$ by identifying $w_1,\ldots,w_n$ as the same vertex $w$ in $M$.

Our aim is to prove the following. 

\begin{thm}\label{cor: coneofcycles}
	Let $M = K_1 \vee G$, where $G$ is the disjoint union of cycles $C_{k_i}$ for $i\in[n]$, with each $k_i \ge 3$. Then,
	\[
	\tau_{DP}(M) =
	\begin{cases}
		5 & \text{if there exist distinct $i,j \in [n]$ such that $k_i=k_j=4$} \\
		4 & \text{otherwise}. 
	\end{cases}
	\]
\end{thm}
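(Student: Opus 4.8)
The plan is to treat $M = K_1 \vee G$ as the vertex-gluing of the wheels $M_i = K_1 \vee C_{k_i}$, identifying their universal vertices $w_1,\dots,w_n$ into the single universal vertex $w$ of $M$, so that $M \in \bigoplus_{i=1}^n (M_i,1)$. For an arbitrary $m$-fold cover $\HH$ of $M$, let $\HH_i$ be the separated cover of $M_i$ and write $a^{(i)}_j = N((w_i,j),\HH_i)$. The bijection underlying Lemma~\ref{lem: lowerGen} (with $p=1$) gives the exact identity $P_{DP}(M,\HH) = \sum_{j=1}^m \prod_{i=1}^n a^{(i)}_j$, which is the engine for everything below. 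Since $P_{DP}(M_i,m)=P(M_i,m)$ for $m \geq 4$ by Theorem~\ref{thm: wheel}, Theorem~\ref{thm: upperbound} immediately yields $P_{DP}(M,m) \leq \prod_i P_{DP}(M_i,m)/m^{n-1} = m\prod_i P(C_{k_i},m-1) = P(M,m)$ for all $m \geq 4$; so the theorem reduces to proving matching lower bounds (and the single strict inequality at $m=4$ in the two-$C_4$ case).

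For the ``threshold is at least $4$'' half, note $\chi(M)=1+\chi(G)$: if some $k_i$ is odd then $\chi(M)=4$ and $\tau_{DP}(M)\geq\chi(M)=4$ for free, while if every $k_i$ is even then $\chi(M)=3$ and I would rule out equality at $m=3$ by combining Theorems~\ref{thm: upperbound} and~\ref{thm: wheel}: $P_{DP}(M,3) \leq \prod_i P_{DP}(M_i,3)/3^{n-1} = 3^n/3^{n-1} = 3$, whereas $P(M,3) = 3\prod_i P(C_{k_i},2) = 3\cdot 2^n \geq 6$. Hence $\tau_{DP}(M) \geq 4$ in all cases.

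The heart of the argument is the lower bound $\sum_j \prod_i a^{(i)}_j \geq m\prod_i c_i$, writing $c_i = P(C_{k_i},m-1)$, which I claim holds for all $m \geq 5$, and at $m=4$ whenever at most one $k_i$ equals $4$. Since $a^{(i)}_j$ is the number of colorings of the $(m-1)$-fold cover $\HH_i^{(j)}$ of $C_{k_i}$ obtained by cone reduction, it is at least $P_{DP}(C_{k_i},m-1)$; for odd $k_i$ this already equals $P(C_{k_i},m-1)=c_i$, so those factors may be bounded below by $c_i$ and pulled out, reducing everything to the even wheels. For even $k_i=2\kappa+2$, however, $P_{DP}(C_{k_i},m-1)=(m-2)^{2\kappa+2}-1 < c_i$, so the hypothesis of Corollary~\ref{thm: general} fails and $a^{(i)}_j$ may genuinely dip below $c_i$. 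To control this I would invoke the refined per-vertex bounds of Lemma~\ref{lem: sethFormula}: if $\HH_i$ has $s$ level vertices, then each universal vertex lies in at least $f_i(s)$ (level) or $g_i(s)$ (non-level) colorings, and the sum $\Phi_i(s)=s f_i(s)+(m-s)g_i(s) \geq m c_i$ (from Theorem~\ref{thm: wheel}) shows the total surplus of any even-wheel profile dominates its total deficit. The key quantitative point is that $\Phi_i$ has nonpositive slope in $s$, and a deficient coordinate (value $c_i-(m-1)$, forced by small $s$) can be accompanied by \emph{zero} net surplus precisely when $\Phi_i$ is constant, i.e. when $(m-2)^{2\kappa}=m$, whose only solution with $m\geq 3,\ \kappa\geq 1$ is $(m,\kappa)=(4,1)$ --- a $C_4$ at $m=4$. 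In every other case each deficient factor carries a strictly positive surplus, and I would finish by reducing to extremal profiles and bounding the pairwise cross terms $\sum_j (a^{(i)}_j-c_i)(a^{(i')}_j-c_{i'})$ against the first-order surplus terms $\prod_{l\neq i}c_l\cdot\sum_j(a^{(i)}_j-c_i)$, showing the surpluses outweigh the negative interaction produced by misaligning the deficient coordinates. This balancing of deficits against surpluses, uniformly over all wheels and all alignments of the covers, is the main obstacle, and it is exactly where the $C_4$-at-$m=4$ exception enters.

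Finally, for the strict inequality when two indices satisfy $k_i=k_j=4$ at $m=4$, I would exhibit a cover realizing the deficit. Using shifted matchings on a single cycle edge (as in the proofs of Theorems~\ref{thm: wheel} and~\ref{thm: threshold}) one builds, for each of the two $C_4$-wheels, a $4$-fold cover whose universal-vertex profile is $(15,19,19,19)$ with $c_i=18$, and canonical covers (uniform profile $c_i$) for all remaining wheels. Amalgamating these via Definition~\ref{defn: vertex amalgamation} with bijections placing the two deficient ``$15$'' coordinates at distinct colors, Lemma~\ref{lem: upperGen} gives $P_{DP}(M,\HH) = \big(\prod_{\text{other}}c_i\big)\sum_j a^{(1)}_j a^{(2)}_j = \big(\prod_{\text{other}}c_i\big)(4c_1c_2-4) = 4\prod_i c_i - 4\prod_{\text{other}}c_i < P(M,4)$. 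This certifies $\tau_{DP}(M)>4$, and together with the lower bound at $m=5$ it pins the threshold at $5$; in all other cases the lower bound already holds at $m=4$, pinning the threshold at $4$.
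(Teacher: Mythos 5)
Your framework is the same as the paper's: view $M$ as the vertex-gluing of the wheels $M_i=K_1\vee C_{k_i}$ at their universal vertices, use separated/amalgamated covers to get the identity $P_{DP}(M,\HH)=\sum_{j=1}^m\prod_{i=1}^n N((w_i,j),\HH_i)$, bound the factors via Lemma~\ref{lem: sethFormula}, and kill the two-$C_4$ case with an explicit cover. Two pieces of your write-up are correct and complete. First, your threshold-at-least-$4$ argument is actually cleaner than the paper's Proposition~\ref{pro: (M,3)}: you only need $P_{DP}(M,3)\le \prod_i P_{DP}(M_i,3)/3^{n-1}=3<3\cdot 2^n=P(M,3)$ from Theorems~\ref{thm: upperbound} and~\ref{thm: wheel} (plus $\tau_{DP}\ge\chi$ when some $k_i$ is odd), whereas the paper computes $P_{DP}(M,3)$ exactly. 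Second, your exceptional construction checks out: the twist fixing one color and $3$-cycling the other three gives a $C_4$-wheel profile $(15,19,19,19)$ with $c_i=18$, and amalgamating two such wheels with misaligned deficits yields $\sum_j a^{(1)}_j a^{(2)}_j=15\cdot19+19\cdot15+19^2+19^2=1292=4\cdot18^2-4$, certifying $P_{DP}(M,4)<P(M,4)$; this differs from the paper's cover in Proposition~\ref{pro: (C_4,4)} (which yields $1280$), but both work.

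The genuine gap is the heart of the theorem: the matching lower bounds $P_{DP}(M,m)\ge P(M,m)$ for all $m\ge5$, and for $m=4$ when at most one $k_i$ equals $4$. You correctly reduce these to $\sum_j\prod_i a^{(i)}_j\ge m\prod_i c_i$ and correctly diagnose where the exception comes from (the slope of $\Phi_i$ vanishes exactly when $(m-2)^{2\kappa}=m$, i.e.\ $(m,\kappa)=(4,1)$), but the inequality itself is never proved: you describe a plan of ``bounding the pairwise cross terms against the first-order surplus terms'' and then explicitly flag this balancing as ``the main obstacle.'' As stated the plan is also insufficient, not merely unexecuted: writing $a^{(i)}_j=c_i+d^{(i)}_j$, the expansion of $\sum_j\prod_i\bigl(c_i+d^{(i)}_j\bigr)$ contains interaction terms of every order up to $n$, not just pairwise ones, and a deficit $d^{(i)}_j\approx-(m-1)$ on one wheel multiplied against surpluses on many other wheels produces negative contributions that a second-order analysis does not control. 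This alignment problem is precisely what the paper's argument is engineered to avoid: in Theorem~\ref{thm: manycycles} and Proposition~\ref{pro: m=4} the AM-GM inequality converts the sum into $m\bigl(\prod_i\prod_j N((w_i,j),\HH_i)\bigr)^{1/m}$, whose inner double product factors over wheels independently of how the deficient coordinates line up; the problem then becomes the per-wheel inequality $\prod_j N((w_i,j),\HH_i)\ge c_i^m$, which still requires the nontrivial technical Lemma~\ref{lem: cyclestechnical} for $m\ge5$, and for $m=4$ with a single $C_4$ a further case analysis in which the possible shortfall of the $C_4$ factor (the quantity $K_{s_1}$) is absorbed by slack of the form $18^4/K_{s_1}$ extracted from the other even wheels. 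Without an actual proof of these estimates, your proposal establishes only $\tau_{DP}(M)\ge4$ in general and $\tau_{DP}(M)\ge5$ in the two-$C_4$ case; the upper bounds $\tau_{DP}(M)\le4$ and $\tau_{DP}(M)\le5$, which are the hard content of the theorem, remain open in your argument.
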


This result demonstrates that the converse of Corollary~\ref{thm: general} does not hold, and it makes progress on Questions~\ref{ques: mono}, \ref{ques: join}, and~\ref{ques: classify} for these graphs.

We begin with a useful definition, observation, and lemma from~\cite{KM21}.  Suppose $G$ is a graph and $\HH = (L,H)$ is an $m$-fold cover of $G$. We say $\HH$ has a \emph{twisted canonical labeling} if $\HH$ is a full $m$-fold cover of $G$ and it is possible to let $L(x) = \{(x,j) : j \in [m]\}$ for each $x \in V(G)$ and choose two adjacent vertices, $u$ and $v$  in $G$ so that whenever $xy \in E(G)-\{uv\}$, $(x,j)$ and $(y,j)$ are adjacent in $H$ for each $j \in [m]$ and there exists $l \in [m]$ such that $(u,l)(v,l) \notin E(H)$. We call the matching $E_{H}(L(u),L(v))$ the \emph{twist}.

\begin{obs} [\cite{KM21}] \label{obsrv: labeling}
Suppose $G$ is a cycle and $\HH = (L,H)$ is an $m$-fold cover of $G$. $\HH$ has a twisted canonical labeling if and only if $\HH$ is a full cover and does not have a canonical labeling.
\end{obs}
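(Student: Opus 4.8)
The plan is to prove the two implications separately, in both cases exploiting the fact that a cycle with one edge removed is a path, hence a tree, so that Proposition~\ref{pro: tree} applies. Throughout I would write the vertices of $G$ in cyclic order as $s_1,\ldots,s_n$ and encode each edge of a full cover by a permutation: when $\HH=(L,H)$ is full and $xy\in E(G)$, the perfect matching $E_H(L(x),L(y))$ determines a bijection $\sigma_{xy}\in S_m$ via the fixed convention $(x,a)(y,b)\in E(H)\iff b=\sigma_{xy}(a)$. Under a given naming, an edge $xy$ is ``straight'' exactly when $\sigma_{xy}=\mathrm{id}$, and a naming is canonical exactly when every edge is straight; this language is what lets me compare the canonical and twisted conditions uniformly.

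For the reverse implication (full and not canonical $\Rightarrow$ twisted), I would delete the edge $s_n s_1$ and set $\HH'=(L,\,H-E_H(L(s_n),L(s_1)))$, which is a full $m$-fold cover of the path $G-\{s_n s_1\}$. Since a path is a tree, Proposition~\ref{pro: tree} gives $\HH'$ a canonical labeling; naming the vertices of $H$ according to it makes every edge of $G$ except possibly $s_n s_1$ straight. The residual matching on $s_n s_1$ is some permutation $\sigma=\sigma_{s_n s_1}$. If $\sigma=\mathrm{id}$, the naming is canonical for all of $\HH$, contradicting the hypothesis; hence $\sigma\neq\mathrm{id}$, so there is $l\in[m]$ with $\sigma(l)\neq l$, i.e.\ $(s_n,l)(s_1,l)\notin E(H)$. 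Taking $u=s_n$ and $v=s_1$, this naming is precisely a twisted canonical labeling.

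For the forward implication, $\HH$ is full directly from the definition of a twisted canonical labeling, so only ``not canonical'' requires work. Fix a twisted labeling with twist on an edge $uv$, so every edge other than $uv$ is straight and $\sigma_{uv}\neq\mathrm{id}$, and assume for contradiction that $\HH$ also admits a canonical labeling. The two namings differ within each part by a bijection: for each $x\in V(G)$ let $\rho_x\in S_m$ be defined by declaring that the vertex named $(x,j)$ in the canonical labeling is named $(x,\rho_x(j))$ in the twisted labeling. Translating the canonical straightness condition into the twisted naming shows that the matching on each edge $xy$ equals $\rho_y\rho_x^{-1}$. The remaining argument is pure bookkeeping: for every edge $xy\neq uv$ straightness forces $\rho_y\rho_x^{-1}=\mathrm{id}$, so $\rho_x=\rho_y$; because $G-\{uv\}$ is a connected path through every vertex, all the $\rho_x$ coincide with a single $\rho$. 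But then the matching on $uv$ is $\rho\rho^{-1}=\mathrm{id}$, contradicting $\sigma_{uv}\neq\mathrm{id}$. Hence $\HH$ has no canonical labeling, completing the equivalence.

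The only genuinely delicate step is the forward direction's claim that the two labelings are related by per-vertex permutations $\rho_x$ and that canonical straightness reads, in the twisted naming, as the matching $\rho_y\rho_x^{-1}$. Once that translation is recorded correctly, connectivity of $G-\{uv\}$ finishes the job immediately. I therefore expect the composition-of-bijections bookkeeping to be the main obstacle, mostly because it is easy to invert the convention $\sigma_{xy}(a)=b\iff(x,a)(y,b)\in E(H)$; I would pin down that convention once at the start and apply it without deviation.
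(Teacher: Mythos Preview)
The paper does not actually prove this observation; it is cited from~\cite{KM21} and stated without argument. So there is no ``paper's own proof'' to compare against. That said, your argument is correct and is the natural one: for the reverse direction you use Proposition~\ref{pro: tree} on the path $G-\{s_ns_1\}$ to straighten all but one edge, and the non-canonicality hypothesis forces the last edge to be twisted; for the forward direction you track how two labelings of the same cover differ by per-vertex permutations $\rho_x$, observe that straightness on $G-\{uv\}$ forces all $\rho_x$ to coincide by connectivity, and reach a contradiction on the edge $uv$. The bookkeeping with $\sigma_{xy}=\rho_y\rho_x^{-1}$ is set up correctly under your stated convention, and the only place one could slip---getting the composition order backwards---you have handled consistently.
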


\begin{lem} [\cite{KM21}] \label{lem: 4.2 Gunjan}
Suppose that $G$ is an even cycle and $\HH = (L,H)$ is a cover of $G$ where $|L(v)| \geq 2$ for each $v \in V(G)$. Then $G$ does not admit an $\mathcal{H}$-coloring if and only if $\HH$ is a $2$-fold cover with a twisted canonical labeling.
\end{lem}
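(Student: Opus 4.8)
The plan is to prove the biconditional by handling the two implications separately. The reverse direction (twisted $2$-fold cover $\Rightarrow$ no $\HH$-coloring) is a short parity computation, while the forward direction (no $\HH$-coloring $\Rightarrow$ the cover is $2$-fold and twisted) requires reducing an arbitrary cover to a full $2$-fold cover and then invoking Observation~\ref{obsrv: labeling}. Throughout I write the even cycle as $G = v_1 v_2 \cdots v_{2k}$ in cyclic order and identify a choice of one vertex per part with a map $c$ assigning to each $x\in V(G)$ the chosen vertex $(x,c(x))$; independence of the selected set is then a constraint read off edge by edge from the cross-matchings.

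For the reverse direction, suppose $\HH$ is a $2$-fold cover with a twisted canonical labeling, so $L(v) = \set{(v,1),(v,2)}$ for each $v$, every edge except one distinguished edge $uv$ carries the matching $(x,1)(y,1),(x,2)(y,2)$, and the matching on $uv$ is the twist $(u,1)(v,2),(u,2)(v,1)$. Then an independent transversal $c\colon V(G)\to[2]$ must satisfy $c(x)\neq c(y)$ across each untwisted edge and $c(u)=c(v)$ across the twist. Traversing the cycle once, $c$ flips across each of the $2k-1$ untwisted edges and stays fixed across the twist; since $2k-1$ is odd, $c$ cannot return consistently to its starting value, so no $\HH$-coloring exists.

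For the forward direction I assume $G$ has no $\HH$-coloring and establish three facts. First, every list has size exactly $2$: because each cross-set between consecutive parts is a matching, a chosen vertex forbids at most one vertex of the next part, so a path coloring extends greedily whenever lists have size $\geq 2$, and the single closing edge forbids at most one further vertex at the terminal part. Hence if some $\abs{L(v)}\geq 3$, breaking the cycle so that $v$ is visited last leaves at least $\abs{L(v)}-2\geq 1$ valid choices at the terminal part, producing an $\HH$-coloring — a contradiction; thus $\HH$ is $2$-fold. Second, $\HH$ is full: if some edge $v_i v_{i+1}$ carries a non-perfect (hence empty or single-edge) matching, I break the cycle at that edge and start the greedy extension at a vertex of $L(v_{i+1})$ left unmatched by this deficient edge; then the closing edge imposes no constraint on the terminal part, again yielding an $\HH$-coloring, a contradiction. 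Third, since $m=2\geq\chi(G)$ and $G$ is bipartite, a canonical labeling would force an $\HH$-coloring (canonical colorings biject with proper $2$-colorings of $G$), so $\HH$ has no canonical labeling. Being a full cover of a cycle with no canonical labeling, $\HH$ has a twisted canonical labeling by Observation~\ref{obsrv: labeling}, completing the proof.

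The step I expect to be the main obstacle is the fullness reduction in the forward direction: the greedy extension only produces slack at the single terminal vertex, so I must exploit the deficiency precisely by starting the extension at an unmatched vertex of the deficient edge, guaranteeing that the closing edge contributes no forbidden value. The careful bookkeeping here — that every non-terminal step loses at most one choice and the terminal step loses at most one more — is the crux, whereas the parity argument and the canonical-labeling step become routine once the cover is known to be full and $2$-fold.
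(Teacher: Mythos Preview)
The paper does not supply its own proof of this lemma: it is quoted from~\cite{KM21} and used as a black box. So there is no argument in the present paper to compare against.

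That said, your proof is correct and self-contained. The reverse direction is the right parity count (with $m=2$ the ``twist'' condition forces the crossed matching $(u,1)(v,2),(u,2)(v,1)$, exactly as you use it). In the forward direction your three reductions are each sound: the greedy path-extension shows any list of size $\ge 3$ would allow closing the cycle; the fullness step works because, having already forced $|L(v)|=2$ everywhere, a deficient matching leaves an unmatched vertex from which you can start so that the closing edge contributes nothing; and the final appeal to Observation~\ref{obsrv: labeling} is exactly what that observation is stated for. Your identification of the fullness step as the only place requiring care is accurate, and your bookkeeping there is right: one forbidden vertex per interior step and at most one at the terminal step when the start is chosen unmatched across the deficient edge.
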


\begin{cor} \label{cor: GunjanTwist}
Suppose $G = C_{2k+2}$ and $\HH = (L,H)$ is a $2$-fold cover of $G$ for which there is no $\HH$-coloring. Then the spanning subgraph of $H$ containing only the cross edges of $H$ is a copy of $C_{4k+4}$.
\end{cor}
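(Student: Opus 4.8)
The plan is to reduce immediately to the structural description supplied by Lemma~\ref{lem: 4.2 Gunjan} and then simply read off the cross-edge graph. Since $G = C_{2k+2}$ is an even cycle and $|L(v)| = 2 \geq 2$ for every $v \in V(G)$, the hypothesis that there is no $\HH$-coloring is exactly the ``only if'' situation of Lemma~\ref{lem: 4.2 Gunjan}, which tells us that $\HH$ is a $2$-fold cover with a twisted canonical labeling. So first I would fix such a labeling: name the vertices of $H$ so that $L(v_i) = \{(v_i,1),(v_i,2)\}$, where $v_1, \ldots, v_{2k+2}$ are the vertices of $G$ in cyclic order, and let the distinguished twist edge be $v_{2k+2}v_1$ after relabeling (any edge may be rotated into this position). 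By definition of twisted canonical labeling, every edge $v_iv_{i+1}$ with $1 \le i \le 2k+1$ carries the matching $\{(v_i,1)(v_{i+1},1),\,(v_i,2)(v_{i+1},2)\}$.

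The next step is to pin down the twist matching itself. A twisted canonical labeling requires $\HH$ to be a full cover, so $E_H(L(v_{2k+2}),L(v_1))$ is a perfect matching between the two-element sets $L(v_{2k+2})$ and $L(v_1)$; and by hypothesis there is an $l$ with $(v_{2k+2},l)(v_1,l) \notin E(H)$, so this matching is not the identity matching. The only other perfect matching between two two-element sets is $\{(v_{2k+2},1)(v_1,2),\,(v_{2k+2},2)(v_1,1)\}$, so the twist must be exactly this crossing matching. Now let $H^*$ be the spanning subgraph of $H$ whose edges are precisely the cross edges. Each vertex $(v_i,j)$ lies in exactly two cross edges, one toward each of its two cyclic neighbors (this is clear for the $2k+1$ untwisted edges, and the twist edge still contributes exactly one cross edge at each of $(v_{2k+2},j)$ and $(v_1,j)$). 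Hence $H^*$ is $2$-regular on $2(2k+2) = 4k+4$ vertices, so it is a disjoint union of cycles, and it remains only to show it is a single cycle.

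To finish, I would trace the cycle explicitly: starting at $(v_1,1)$, the untwisted matchings walk us up the ``first-coordinate layer'' through $(v_2,1),(v_3,1),\ldots,(v_{2k+2},1)$; the twist edge then sends $(v_{2k+2},1)$ to $(v_1,2)$; the untwisted matchings walk the ``second-coordinate layer'' through $(v_2,2),\ldots,(v_{2k+2},2)$; and the twist edge sends $(v_{2k+2},2)$ back to $(v_1,1)$, closing up. This closed walk visits all $4k+4$ vertices exactly once, so $H^* \cong C_{4k+4}$. The only conceptual point — rather than computational obstacle — is recognizing that the single twist is precisely what fuses the two layers: a genuine (untwisted) canonical labeling would give two disjoint copies of $C_{2k+2}$, whereas the crossing on one edge splices them into one cycle of double length. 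I expect the tracing itself to be routine, so the real content is the clean reduction via Lemma~\ref{lem: 4.2 Gunjan} and the observation that fullness forces the twist to be the crossing matching.
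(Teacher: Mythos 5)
Your proof is correct and follows essentially the same route as the paper's: both invoke Lemma~\ref{lem: 4.2 Gunjan} to obtain a twisted canonical labeling, pin down the twist as the crossing matching (the paper does this via Proposition~\ref{pro: tree} and Observation~\ref{obsrv: labeling}, you directly from the definition of twisted canonical labeling plus fullness), and then read off that the cross edges form a single cycle of length $4k+4$. Your explicit trace of the cycle is a slightly more detailed justification of the final step, which the paper simply asserts.
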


\begin{proof}
Suppose that the vertices of $G$ in cyclic order are $v_1,\ldots,v_{2k+2}$. Since $G$ does not admit an $\HH$-coloring, by Lemma~\ref{lem: 4.2 Gunjan}, $\HH$ is a $2$-fold cover with a twisted canonical labeling. Suppose $G' = G - \{v_1v_{2k+2}\}$ and $H' = H - E_H(L(v_1),L(v_{2k+2}))$. Let $\HH' = (L,H')$, and note $\HH'$ is a $2$-fold cover of $G'$. By Proposition~\ref{pro: tree}, $\HH'$ has a canonical labeling. Suppose the vertices of $H'$ are renamed according to this labeling. Since $\HH$ has a twisted canonical labeling, we know by Observation~\ref{obsrv: labeling}, $\HH$ does not have a canonical labeling. Thus, $(v_1,1)(v_{2k+2},2),(v_1,2)(v_{2k+2},1) \in E(H)$. Notice $E = \{(v_i,1)(v_i,2) : i \in [2k+2]\}$ are the non-cross edges of $H$.  Finally, $H - E = C_{4k+4}$.
\end{proof}

The proof of the next result will show that the converse of Corollary~\ref{thm: general} is not true, and it will give a lower bound on the DP color function threshold of cones of disjoint unions of cycles.

\begin{pro}\label{pro: (M,3)}
Suppose $n \geq 2$. Let $M = K_1 \vee G$, where $G$ is the disjoint union of cycles $C_{k_i}$ for $i\in[n]$, with each $k_i \ge 3$.
If $k_i$ is odd for any $i \in [n]$ or $n \geq 3$, then $P_{DP}(M,3) = 0.$ If $n=2$ and $k_i$ is even for each $i \in [n]$, then $$P_{DP}(M,3) = 3 = \frac{P_{DP}(M_1,3)P_{DP}(M_2,3)}{3} < 12 = P(M,3).$$ Consequently, $\tau_{DP}(M) \geq 4$.
\end{pro}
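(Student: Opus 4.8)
The plan is to split into three cases matching the statement: (a) some $k_i$ is odd; (b) all $k_i$ are even with $n \geq 3$; and (c) $n = 2$ with both cycles even. Case (a) is immediate: if some $k_i$ is odd, then $M$ contains $K_1 \vee C_{k_i}$, whose chromatic number is $4$, so $\chi(M) = 4$, giving $P(M,3) = 0$ and hence $P_{DP}(M,3) \le P(M,3) = 0$; moreover $\tau_{DP}(M) \ge \chi(M) = 4$ by definition.

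For case (b) the idea is to build a single $3$-fold cover $\HH$ of $M$ with \emph{no} $\HH$-coloring by dedicating three of the (now necessarily even) cycles to ``block'' the three possible colors of the universal vertex $w$. Concretely, I would take every $w$-to-cycle edge and every path edge of each cycle to be canonical, and on the closing edge $v^i_1 v^i_{k_i}$ of cycle $i \in \{1,2,3\}$ install the perfect matching that pairs color $i$ with itself and swaps the other two colors. When one selects $(w,j)$, the cone reduction deletes color $j$ from every cycle vertex; on cycle $j$ this leaves exactly a $2$-fold cover of the even cycle $C_{k_j}$ with a twisted canonical labeling, which by Lemma~\ref{lem: 4.2 Gunjan} admits no coloring. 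Hence no choice of $(w,j)$ extends to an $\HH$-coloring, so $P_{DP}(M,3) \le P_{DP}(M,\HH) = 0$; the cycles $C_{k_4},\dots,C_{k_n}$ may be covered arbitrarily.

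Case (c) is the crux. The upper bound $P_{DP}(M,3) \le 3$ is immediate from Theorem~\ref{thm: upperbound} together with $P_{DP}(M_i,3) = 3$ (Theorem~\ref{thm: wheel}). For the matching lower bound I would fix an arbitrary $3$-fold cover $\HH$ of $M$, pass to the separated covers $\HH_1,\HH_2$ of $M_1,M_2$, and set $a_j = N((w_1,j),\HH_1)$ and $b_j = N((w_2,j),\HH_2)$. The bijection underlying Lemma~\ref{lem: lowerGen} (with $p=1$) yields $P_{DP}(M,\HH) = \sum_{j=1}^3 a_j b_j$, so it suffices to show $\sum_j a_j b_j \ge 3$. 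The engine is Lemma~\ref{lem: sethFormula} specialized to $m=3$, where $p = 2$, $p_1 = 0$, $p_2 = 1$ and the number $s$ of level vertices lies in $\{0,1,3\}$: this forces each profile $(a_1,a_2,a_3)$ of a $3$-fold cover of an even cone to satisfy that $s=0$ covers have all entries $\ge 1$, $s=3$ covers have all entries $\ge 2$, and $s=1$ covers have two entries $\ge 2$ and at most one zero entry. Consequently any profile has \emph{at most one} zero, and wherever a zero occurs the other two entries are $\ge 2$. A short case analysis on the locations of the zeros then gives $\sum_j a_j b_j \ge 3$: if neither profile has a zero the sum is $\ge 3$; if only one does, two surviving terms are each $\ge 2\cdot 1$; and if both do, either their zeros coincide (two surviving terms are each $\ge 2\cdot 2$) or they differ, in which case the unique index avoiding both zeros contributes $\ge 2\cdot 2$.

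The main obstacle will be precisely this lower bound in case (c): the naive estimate from Lemma~\ref{lem: lowerGen} is useless because an individual $(w_i,j)$ can lie in zero colorings, so one cannot bound each cycle's contribution in isolation and must instead exploit the global constraint on even-cone profiles (at most one zero, with the remaining entries pushed up to $2$) provided by Lemma~\ref{lem: sethFormula}. Collecting the cases finishes the proof: when $n = 2$ with both cycles even, $P_{DP}(M,3) = 3 = P_{DP}(M_1,3)P_{DP}(M_2,3)/3 < 12 = P(M,3)$; otherwise $P_{DP}(M,3) = 0$, which is strictly below $P(M,3)$ when $\chi(M) = 3$ and occurs together with $\chi(M) = 4$ in the odd case. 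In every case $P_{DP}(M,3) \neq P(M,3)$ or $\chi(M) = 4$, so $\tau_{DP}(M) \ge 4$.
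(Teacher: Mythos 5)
Your proposal is correct, and although its skeleton (the three-way case split and the color-blocking constructions when the answer is $0$ or when some cycle is odd) coincides with the paper's, your treatment of the crux case ($n=2$, both cycles even) is genuinely different on both bounds. For the lower bound, the paper also passes to the separated covers $\HH_1,\HH_2$ of the two wheels and runs a case analysis on which counts $N((w_i,j),\HH_i)$ vanish, but its engine for the key fact --- a vanishing count forces the other two counts for that wheel to be at least $2$ --- is structural: it invokes Corollary~\ref{cor: GunjanTwist} to show the cross-edges of the relevant $2$-fold cover form one long cycle, and then exhibits four explicit $\HH_1$-colorings. You instead extract the same profile constraints numerically from Lemma~\ref{lem: sethFormula} at $m=3$ (where indeed $p=2$, $p_1=0$, $p_2=1$, and $s\in\{0,1,3\}$), whose level/non-level bounds $s-1$ and $s+1$ immediately yield ``at most one zero, and a zero forces the other two entries to be $\geq 2$''; your four-case verification of $\sum_j a_jb_j\ge 3$ is then the same bookkeeping as the paper's (and your identity $P_{DP}(M,\HH)=\sum_j a_jb_j$ is what the paper gets by observing that $\HH$ is the amalgamated cover of its separated covers and citing Lemma~\ref{lem: upperGen}). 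For the matching upper bound, the paper builds an explicit amalgamated cover with exactly three colorings, reusing the twisted cover from the proof of Theorem~\ref{thm: wheel}, whereas you obtain $P_{DP}(M,3)\le P_{DP}(M_1,3)P_{DP}(M_2,3)/3=3$ in one line from Theorem~\ref{thm: upperbound} and Theorem~\ref{thm: wheel}. Your route is shorter and cleanly reuses general machinery proved earlier in the paper (so there is no circularity); the paper's route is more self-contained and explicit, displaying the extremal cover and its colorings and the structural reason behind the counts, which is the style of argument it continues to use in Propositions~\ref{pro: m=4} and~\ref{pro: (C_4,4)}.
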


\begin{proof}
Suppose that the vertices of the copy of $C_{k_i}$ in $M_i$ in cyclic order are $v_{i,1},\ldots,v_{i,k_i}$ for each $i \in [n]$.  We will prove the following statements: (1) $P_{DP}(M,3) = 0$ when $k_i$ is odd for some $i \in [n]$, (2) $P_{DP}(M,3) = 3$ when $n = 2$ and $k_1$ and $k_2$ are even, and (3) $P_{DP}(M,3) = 0$ when $n \geq 3$ and $k_i$ is even for each $i \in [n]$. For (1), note that $\chi(M)=4$ which means $P(M,3) = 0$ which implies that $P_{DP}(M,3) = 0$.

With our focus on (2), we suppose that $n = 2$ and $k_1$ and $k_2$ are even.  Suppose $\HH = (L,H)$ is an arbitrary full $3$-fold cover of $M$. For each $i \in [2]$, assume $\HH_i = (L_i,H_i)$ is the separated cover of $M_i$ obtained from $\HH$ where $u_{i,1} = w_i$.  Let $f: L_1(w_1) \rightarrow L_2(w_2)$ be the function given by $f((w_1,j)) = (w_2,j)$ for each $j \in [3]$. Note $\HH = (L,H)$ is the $f$-amalgamated $3$-fold cover of $M$ obtained from $\HH_1$ and $\HH_2$. Since $M_i - E(M_i[\{v_{i,j} : j \in [k_i]\}])$ is a spanning tree of $M_i$ for each $i \in [2]$, we can rename the vertices in $L(v_{i,j})$, while maintaining the names of the vertices in $L(w_i)$, so that $(w_i,l)(v_{i,j},l) \in E(H_i)$ for each $j \in [k_i]$ and each $l \in [3]$ (see the proof of Proposition 21 in~\cite{KM19}).  

We will show $P_{DP}(M, \HH) \geq 3$ which will imply $P_{DP}(M,3) \geq 3$. In the case $N((w_i,j),\HH_i) \geq 1$ for each $i \in [2]$ and $j \in [3]$, by Lemma~\ref{lem: upperGen}, $P_{DP}(M,\HH) = \sum_{j=1}^{3} N((w_1,j),\HH_1)N((w_2,j),\HH_2) \geq 3$.  So, assume without loss of generality that $N((w_1,1),\HH_1) = 0$. Let $\HH_1^{(1)}$ be the cone reduction of $\HH_1$ by $(w_1,1)$, and let $H' = H_1[\bigcup_{s=1}^{k_1} \{(v_{1,s},2),(v_{1,s},3)\}] - \{(v_{1,s},2)(v_{1,s},3): s \in [k_1]\}$. Notice $\HH_1^{(1)}$ is a $2$-fold cover of an even cycle, $H'$ is a spanning subgraph of $H_1^{(1)}$, and there are no $\HH_1^{(1)}$-colorings of $H'$. Then by Corollary~\ref{cor: GunjanTwist}, $H' = C_{2k_1}$.  This implies $H_1[\{(v_{1,s},1) : s \in [k_1]\}] = C_{k_1}$ and $H_1[\{(v_{1,s},2) : s \in [k_1]\}]$ and $H_1[\{(v_{1,s},3) : s \in [k_1]\}]$ are proper subgraphs of $C_{k_1}$.  Also notice that $\{(w_1,2)\} \cup \{(v_{1,s},2+(-1)^s) : s \in [k_1]\}$, $\{(w_1,2)\} \cup \{(v_{1,s},2-(-1)^s) : s \in [k_1]\}$, $\{(w_1,3)\} \cup \{(v_{1,s},(3+(-1)^s)/2) : s \in [k_1]\}$, and $\{(w_1,3)\} \cup \{(v_{1,s},(3-(-1)^s)/2) : s \in [k_1]\}$ are $\HH_1$-colorings of $M_1$. Thus, $N((w_1,2),\HH_1) \geq 2$ and $N((w_1,3),\HH_1) \geq 2$. If $N((w_2,2),\HH_2) \geq 1$ and $N((w_2,3),\HH_2) \geq 1$, then by Lemma~\ref{lem: upperGen}, $P_{DP}(M,\HH) \geq 4$. So, assume without loss of generality that $N((w_2,2),\HH_2) = 0$. By a similar argument, $N((w_2,3),\HH_2) \geq 2$. Then by Lemma~\ref{lem: upperGen}, $P_{DP}(M,\HH) \geq 4$. Thus, $P_{DP}(M,3) \geq 3$.

To complete the proof of (2), we will now construct a $3$-fold cover $\HH = (L,H)$ of $M$ with exactly three colorings. First, for each $i \in [2]$, we construct a $3$-fold cover $\HH_i = (L_i,H_i)$ of $M_i$. Let $L_1(x) = \{(x,j) : j \in [m]\}$ for each $x \in V(M_1)$. Construct edges in $H_1$ so that $L_1(x)$ is a clique in $H_1$ for each $x \in V(M_1)$. Whenever $uv \in E(M_1) - \{v_{1,1}v_{1,k_1}\}$, create an edge between $(u,j)$ and $(v,j)$ for each $j \in [3]$.  Finally, construct the edges $(v_{1,1},1)(v_{1,k_1},2),(v_{1,1},2)(v_{1,k_1},3),(v_{1,1},3)(v_{1,k_1},1)$. Construct $\HH_2$ similarly. By the proof of Theorem~\ref{thm: wheel}, $N((w_i,j),\HH_i) = 1$ for each $i \in [2]$ and $j \in [3]$. Let $f : L_1(w_1) \rightarrow L_2(w_2)$ be the function defined by $f((w_1,j)) = (w_2,j)$. Let $\HH$ be the $f$-amalgamated $m$-fold cover of $G$ obtained from $\HH_1$ and $\HH_2$. By Lemma~\ref{lem: upperGen}, $P_{DP}(M, \mathcal{H}) = 3$.  Thus, $P_{DP}(M,3) = 3 = P_{DP}(M_1,3)P_{DP}(M_2,3)/3$. 

For (3), for each $i \in [n]$, we construct a $3$-fold cover $\HH_i = (L_i,H_i)$ of $M_i$. For each $i \in [n]$ let $L_i(x) = \{(x,j) : j \in [m]\}$ for each $x \in V(M_i)$. Construct edges in $H_1$ so that $L_1(x)$ is a clique in $H_1$ for each $x \in V(M_1)$. Whenever $uv \in E(M_1) - \{v_{1,1}v_{1,k_1}\}$, create an edge between $(u,j)$ and $(v,j)$ for each $j \in [3]$.  Finally, construct the edges $(v_{1,1},1)(v_{1,k_1},1),(v_{1,1},2)(v_{1,k_1},3),(v_{1,1},3)(v_{1,k_1},2)$.  

Construct edges in $H_2$ so that $L_2(x)$ is a clique in $H_2$ for each $x \in V(M_2)$. Whenever $uv \in E(M_1) - \{v_{2,1}v_{2,k_2}\}$, create an edge between $(u,j)$ and $(v,j)$ for each $j \in [3]$.  Finally, construct the edges $(v_{2,1},1)(v_{2,k_2},3),(v_{2,1},2)(v_{2,k_2},2),(v_{2,1},3)(v_{2,k_2},1)$.  

Construct edges in $H_3$ so that $L_3(x)$ is a clique in $H_3$ for each $x \in V(M_3)$. Whenever $uv \in E(M_3) - \{v_{3,1}v_{3,k_3}\}$, create an edge between $(u,j)$ and $(v,j)$ for each $j \in [3]$.  Finally, construct the edges $(v_{3,1},1)(v_{3,k_3},2),(v_{3,1},2)(v_{3,k_3},1),(v_{3,1},3)(v_{3,k_3},3)$.  

For each $4 \leq i \leq n$ arbitrarily construct the edges of $H_i$ so that $\mathcal{H}_i$ is a cover of $M_i$.  By construction, $N((w_1,1),\HH_1) = 0$, $N((w_2,2),\HH_2) = 0$, and $N((w_3,3),\HH_3) = 0$.  For each $2 \leq i \leq n$, let $f_i: L_1(w_1) \rightarrow L_i(w_i)$ be the function given by $f_i((w_1,j)) = (w_i,j)$ for each $j \in [3]$. Let $F = (f_2,\ldots,f_l)$. Then let $\HH = (L,H)$ be the $F$-amalgamated $3$-fold cover of $M$ obtained from $\HH_1,\ldots,\HH_n$. By Lemma~\ref{lem: upperGen}, $P_{DP}(M, \mathcal{H}) = 0$ which implies that $P_{DP}(M,3) = 0$.
\end{proof}

The next theorem will provide an upper bound on the DP color function threshold of cones of the disjoint unions of cycles. First, we prove a technical lemma.
\begin{lem}\label{lem: cyclestechnical}
For any $m \geq 5$ and non-negative integer $s$ satisfying $ s \leq m - 2$, $$\left(1 - \frac{m}{(m-2)^4}\right)^{s}\left(1 + \frac{1}{(m-1)(m-2)} - \frac{m}{(m-2)^4}\right)^{m-s} > 1.$$
\end{lem}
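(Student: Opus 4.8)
The plan is to take logarithms and exploit the fact that the exponent depends on $s$ only linearly. Writing $a = 1 - \frac{m}{(m-2)^4}$ and $b = 1 + \frac{1}{(m-1)(m-2)} - \frac{m}{(m-2)^4}$, the quantity to be bounded is $a^{s}b^{m-s}$, and $\ln\!\left(a^{s}b^{m-s}\right) = m\ln b + s\,\ln(a/b)$ is affine in $s$. First I would record that $0 < a < 1 < b$ for every $m \geq 5$: the bound $a<1$ is clear, while $b>1$ amounts to $\frac{1}{(m-1)(m-2)} > \frac{m}{(m-2)^4}$, i.e.\ $(m-2)^3 > m(m-1)$, which holds for all $m \geq 5$. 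Consequently $\ln(a/b) < 0$, so the affine map $s \mapsto m\ln b + s\ln(a/b)$ is strictly decreasing, and over $0 \le s \le m-2$ its minimum occurs at the right endpoint $s = m-2$. Thus the whole family of inequalities collapses to the single statement $a^{m-2}b^{2} > 1$ (the opposite endpoint $s=0$ gives $b^{m}>1$, immediate from $b>1$).

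It then remains to prove $(m-2)\ln a + 2\ln b > 0$. Here I would substitute $\alpha = \frac{m}{(m-2)^4}$ and $\gamma = \frac{1}{(m-1)(m-2)}$, so that $a = 1-\alpha$ and $b = 1 - \alpha + \gamma$ with $0 < \alpha < \tfrac12$ (the value is largest at $m=5$, where $\alpha = 5/81$) and $\gamma - \alpha > 0$. Using the elementary two-sided logarithm estimates $\ln(1-\alpha) \ge -\alpha - \alpha^2$ (valid because $\alpha \le \tfrac12$) and $\ln\!\big(1+(\gamma-\alpha)\big) \ge (\gamma-\alpha) - \tfrac12(\gamma-\alpha)^2$, and noting $(m-2)\alpha = \frac{m}{(m-2)^3}$, I would obtain
\[
(m-2)\ln a + 2\ln b \;\ge\; -\frac{m}{(m-2)^3} - (m-2)\alpha^2 + 2(\gamma-\alpha) - (\gamma-\alpha)^2 .
\]
The final step is to clear denominators and verify that this rational lower bound is positive for all integers $m \ge 5$; this reduces to checking positivity of an explicit polynomial in $m$, which can be handled by a dominant-term estimate for large $m$ together with direct evaluation at the finitely many smallest values.

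The delicate point, and the main obstacle, is that the two leading contributions nearly cancel: the negative term $-\frac{m}{(m-2)^3}$ coming from $\ln a$ and the positive term $\frac{2}{(m-1)(m-2)}$ coming from $\ln b$ are both of order $m^{-2}$, so the sign of the entire expression is governed by the small difference of their coefficients together with the quadratic correction terms. In particular a first-order (Bernoulli) estimate $\ln(1-\alpha) \ge -\alpha$ by itself is too lossy, and one genuinely needs the second-order bounds above with their correct signs. This same cancellation means the smallest admissible values of $m$ are the tightest, so I expect those to require separate explicit verification before the asymptotic estimate can be invoked to close out the remaining range.
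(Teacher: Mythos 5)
Your reduction to the endpoint is correct and is exactly the paper's first step: since $a<b$, the quantity $a^{s}b^{m-s}$ is strictly decreasing in $s$ (the paper phrases this via the ratio $A_{s+1}/A_{s}=a/b<1$), so the whole family collapses to $a^{m-2}b^{2}>1$. The genuine gap is the step you defer to the end: the ``explicit polynomial verification for all integers $m\ge 5$'' cannot succeed, because the endpoint inequality is \emph{false} for $m\in\{5,6,7,8\}$. At $m=5$,
\[
a^{3}b^{2}=\left(\frac{76}{81}\right)^{3}\left(1+\frac{1}{12}-\frac{5}{81}\right)^{2}=\left(\frac{76}{81}\right)^{3}\left(\frac{331}{324}\right)^{2}\approx 0.862<1,
\]
and the analogous values at $m=6,7,8$ are approximately $0.958$, $0.988$, $0.998$; positivity of $(m-2)\ln a+2\ln b$ first holds at $m=9$. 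Your own expansion already exposes this: after your two second-order logarithm bounds, the lower bound collapses to $(2\gamma-m\alpha)-(m-2)\alpha^{2}-(\gamma-\alpha)^{2}$, and its leading part
\[
2\gamma-m\alpha=\frac{2}{(m-1)(m-2)}-\frac{m^{2}}{(m-2)^{4}}=\frac{m^{3}-11m^{2}+24m-16}{(m-1)(m-2)^{4}}
\]
is negative for $5\le m\le 8$ (the cubic takes the values $-46,-52,-44,-16$ there, and $+38$ at $m=9$). Since the quadratic correction terms are subtracted as well, your rational lower bound is negative on precisely the range where the statement itself fails, so no sharpening of the logarithm estimates can close the argument for $m\le 8$.

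You should know this is not a defect of your attempt alone: the lemma as stated is incorrect, and the paper's own proof contains the same hole. The paper proves $\ln f(m)>0$ for $m\ge 23$ by a limit-plus-monotonicity argument and then asserts that one can directly verify $f(m)>1$ for each $5\le m<23$; that verification in fact fails at $m=5,6,7,8$. Your method, carried out as written, does establish the inequality for every $m\ge 9$ (the cubic $m^{3}-11m^{2}+24m-16$ is positive there and dominates the correction terms, which are of higher order in $1/m$; at $m=9$ your bound is already $\approx 0.0017>0$), and that is the most that is true. The damage is localized to the proof of Theorem~\ref{thm: manycycles}, where Lemma~\ref{lem: cyclestechnical} is invoked to show $Z_{4}\ge 1$: the lossy replacement of $Y_{4}=(m-2)^{4}+(m-2)$ by $(m-2)^{4}$ and of $m-s_{i}$ and $m-1-s_{i}$ by $m$ is exactly what breaks for small $m$, while the unweakened quantity appears to remain at least $1$ (e.g.\ at $m=5$, $s_{i}=3$ it equals $\left(\frac{41}{42}\right)^{3}\left(\frac{15}{14}\right)^{2}\approx 1.068$). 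So the theorem is likely still true for $m\in\{5,\ldots,8\}$, but it requires a sharper lemma than the one stated, and neither your argument nor the paper's proves the lemma in its current form.
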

\begin{proof}
Let $A_s = \left(1 - (m/(m-2)^4)\right)^{s}\left(1 + (1/((m-1)(m-2))) - (m/(m-2)^4)\right)^{m-s}$. Notice that for each $m \geq 5$ and $0 \leq s \leq m-1$, $A_{s+1}/A_s = (1 - (m/(m-2)^4))(1 + (1/((m-1)(m-2))) - (m/(m-2)^4))^{-1} < 1$. Thus, $A_{m-2} \leq A_s$ for each $0 \leq s \leq m - 2$. Let $f(m) = (1 - (m/(m-2)^4))^{m-2}(1 + (1/((m-1)(m-2))) - (m/(m-2)^4))^{2}$. To complete the proof we will show that
\begin{align*}
    \ln{(f(m))} = (m-2)\ln{\left(1 - \frac{m}{(m-2)^4}\right)} + 2\ln{\left(1 + \frac{1}{(m-1)(m-2)} - \frac{m}{(m-2)^4}\right)} > 0
\end{align*}
for each $m \geq 23$. Notice $\lim_{m \to \infty} \ln{(f(m))} = 0$. Furthermore,
\begin{align*}
    [\ln{(f(m))}]' &= \frac{3m+2}{m^4 - 8m^3 + 24m^2 - 33m + 16} \\ &- \frac{2(2m^4 - 18m^3 + 46m^2 - 51m + 22)}{(m-1)(m-2)(m^5 - 9m^4 + 33m^3 - 63m^2 + 61m - 24)} + \ln{\left(1 - \frac{m}{(m-2)^4}\right)}.
\end{align*}
Using a computer algebra system, it is easy to see that the sum of the rational terms of $[\ln{(f(m))}]'$ are negative for each $m \geq 23$. Thus, $[\ln{(f(m))}]' < 0$ for each $m \geq 23$ which implies that $\ln{(f(m))} > 0$ for each $m \geq 23$. It is easy to directly verify that $f(m) > 1$ for each $5 \leq m < 23$.
\end{proof}

\begin{thm}\label{thm: manycycles}
Suppose $m,n \in \N$, $m \geq 5$, and $n \geq 2$. Let $M = K_1 \vee G$, where $G$ is the disjoint union of cycles $C_{k_i}$ for $i\in[n]$, with each $k_i \ge 3$.
Then $P_{DP}(M,m) = P(M,m).$  Consequently, $\tau_{DP}(M) \leq 5$.
\end{thm}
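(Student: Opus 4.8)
The plan is to prove $P_{DP}(M,m)\ge P(M,m)$ for every $m\ge 5$, the reverse inequality being automatic. Fix such an $m$ and let $\HH=(L,H)$ be an $m$-fold cover of $M$ with $P_{DP}(M,\HH)=P_{DP}(M,m)$, which we may assume is full. Using the cone conventions and Proposition~\ref{pro: tree}, I would name the vertices so that $w$ is the universal vertex and the $w$-to-cycle matchings are canonical, so that for each $j\in[m]$ the cone reduction $\HH^{(j)}$ is a well-defined $(m-1)$-fold cover of $G=\bigcup_i C_{k_i}$. Since $G$ is a disjoint union, $\HH^{(j)}$ splits across the cycles, giving $N((w,j),\HH)=\prod_{i=1}^n N_{i,j}$, where $N_{i,j}$ is the number of colorings of the induced $(m-1)$-fold cover of $C_{k_i}$. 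Writing $p_i=P(C_{k_i},m-1)$ and recalling $P(M,m)=m\prod_i p_i$, the goal becomes
$$P_{DP}(M,\HH)=\sum_{j=1}^m\prod_{i=1}^n N_{i,j}\ \ge\ m\prod_{i=1}^n p_i.$$

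Since $m-1\ge 4>\chi_{DP}(C_{k_i})$, every $N_{i,j}>0$, so the AM-GM Inequality gives
$$\sum_{j=1}^m\prod_{i=1}^n N_{i,j}\ \ge\ m\Big(\prod_{j=1}^m\prod_{i=1}^n N_{i,j}\Big)^{1/m}=m\prod_{i=1}^n\Big(\prod_{j=1}^m N_{i,j}\Big)^{1/m},$$
so it suffices to prove the per-cycle bound $\prod_{j=1}^m N_{i,j}\ge p_i^m$ for each $i$. When $k_i$ is odd this is immediate: each induced cover is an $(m-1)$-fold cover of $C_{k_i}$, so $N_{i,j}\ge P_{DP}(C_{k_i},m-1)=P(C_{k_i},m-1)=p_i$ by Theorem~\ref{thm: onecycle}(i). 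The difficulty is concentrated in the even cycles, where $P_{DP}(C_{k_i},m-1)=(m-2)^{k_i}-1$ lies strictly below $p_i=(m-2)^{k_i}+(m-2)$, so individual factors $N_{i,j}$ can dip below $p_i$.

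For an even $C_{k_i}$ I would classify each $j$ as a \emph{level} vertex (the induced cover of $C_{k_i}$ is full) or not, and establish two $s$-independent ratio bounds in the spirit of Lemma~\ref{lem: sethFormula}. For a level $j$ the trivial estimate $N_{i,j}\ge P_{DP}(C_{k_i},m-1)=(m-2)^{k_i}-1$ already yields $N_{i,j}/p_i\ge 1-m/(m-2)^{k_i}$; for a non-level $j$ the induced cover fails to be full and a more careful inclusion--exclusion count (mirroring the level/non-level dichotomy of Lemma~\ref{lem: sethFormula}, with Observation~\ref{obsrv: labeling} describing the possible twisted closings) gives $N_{i,j}/p_i\ge 1+\tfrac{1}{(m-1)(m-2)}-\tfrac{m}{(m-2)^{k_i}}>1$. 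Letting $s$ be the number of level vertices for $C_{k_i}$, this makes $\prod_j N_{i,j}/p_i^m\ge \alpha^s\beta^{m-s}$ with $\alpha,\beta$ the two displayed bases. Because the number of level vertices of any cover lies in $\{0\}\cup([m]-\{m-1\})$, we have $s\le m-2$ or $s=m$; the case $s=m$ forces each $N_{i,j}\ge p_i$ directly, and for $s\le m-2$ the smallest even cycle $k_i=4$ is the extremal case, which is precisely Lemma~\ref{lem: cyclestechnical}, giving $\alpha^s\beta^{m-s}>1$, while larger even cycles only push $\alpha,\beta$ closer to $1$ and make the inequality easier. This yields $\prod_j N_{i,j}\ge p_i^m$, completing the proof, and since $P_{DP}(M,m)=P(M,m)$ then holds for all $m\ge5$ we conclude $\tau_{DP}(M)\le5$.

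The main obstacle is the non-level per-vertex estimate: unlike the level case one cannot simply invoke the absolute DP-minimum, and instead must show that every non-level (non-full) induced cover of an even cycle has strictly more than $p_i$ colorings, with the clean quantitative slack $1/((m-1)(m-2))$, controlled uniformly in the twist structure of the closing matching. Verifying that $C_4$ is genuinely the extremal cycle, so that Lemma~\ref{lem: cyclestechnical} alone suffices, is the delicate point; by comparison the AM-GM reduction and the odd-cycle case are routine.
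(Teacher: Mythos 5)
Your proposal is correct and follows essentially the same route as the paper's proof: decompose $P_{DP}(M,\HH)$ over the universal vertex into per-cycle counts (the paper phrases this via separated and amalgamated covers of the wheels $M_i$, you via cone reductions splitting over the disjoint union, which is the same computation), apply the AM-GM Inequality, bound level and non-level vertices via Lemma~\ref{lem: sethFormula}, and reduce to the extremal case $C_4$ handled by Lemma~\ref{lem: cyclestechnical}. The only cosmetic difference is that you pass to $s$-independent ratio bounds (the $s=0$ worst case of Lemma~\ref{lem: sethFormula}) before invoking the technical lemma, whereas the paper keeps the $s_i$-dependence in the quantity $Z_{k_i}$ and normalizes by $Y_{k_i}$ at the end; both treatments dispose of the all-level case $s=m$ trivially.
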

\begin{proof}
Let $\HH = (L,H)$ be an arbitrary $m$-fold cover of $M$. For each $i \in [n]$, let $\HH_i = (L_i,H_i)$ be the separated cover of $M_i$ obtained from $\HH$ where $u_{i,1} = w_i$. For each $2 \leq i \leq l$, let $f_i: L_1(w_1) \rightarrow L_i(w_i)$ be the function given by $f_i((w_1,j)) = (w_i,j)$ for each $j \in [m]$. Let $F = (f_2,\ldots,f_n)$. Then $\HH = (L,H)$ is the $F$-amalgamated $m$-fold cover of $M$ obtained from $\HH_1,\ldots,\HH_n$. Let $s_i$ be the number of level vertices of $\HH_i$ in $L_i(w_i)$ for each $i \in [n]$. Note $s_i \in \{0\} \cup ([m] - \{m-1\})$. If $L_i(w_i)$ has a level vertex of $\HH_i$, let $a_i$ be a level vertex of $\HH_i$ in the fewest number of $\HH_i$-colorings for each $i \in [n]$. If $L_i(w_i)$ has a vertex that is not a level vertex of $\HH_i$, let $b_i$ be such a vertex in the fewest number of $\HH_i$-colorings for each $i \in [n]$. If $k_i$ is odd, since $P_{DP}(C_{k_i},m-1) = P(C_{k_i},m-1)$, then $N(a_i,\HH_i)$ and $N(b_i,\HH_i)$ are both bounded below by $P(C_{k_i},m-1)$. If $k_i$ is even, then by Lemma~\ref{lem: sethFormula}, $N(a_i,\HH_i) \geq (m-2)^{k_i} + s_i - 2$ and $N(b_i,\HH_i) \geq ((m^2 - 3m + 3)(m-2)^{k_i-1} - m + 2 + s_i(m-1))/(m-1)$ for each $i \in [l]$.

By Lemma~\ref{lem: upperGen}, $P_{DP}(M,\HH) = \sum_{j=1}^{m} \prod_{i=1}^{n} N((w_i,j),\HH_i)$. By the AM-GM Inequality, $P_{DP}(M,\HH) \geq m\left(\prod_{j=1}^{m} \prod_{i=1}^{n} N((w_i,j),\HH_i)\right)^{1/m} \geq m\left(\prod_{i=1}^{n} N(a_i,\HH_i)^{s_i}N(b_i,\HH_i)^{m-s_i}\right)^{1/m}$. Let $Y_{k_i} = P(M_i,m)/m$. Notice that when $k_i$ is odd, $Y_{k_i} = (m-2)^{k_i} - (m-2)$, and when $k_i$ is even, $Y_{k_i} = (m-2)^{k_i} + (m-2)$. We will show that $N(a_i,\HH_i)^{s_i}N(b_i,\HH_i)^{m-s_i} \geq Y_{k_i}^{m}$ for each $i \in [n]$, $0 \leq s \leq m - 2$, and $m \geq 5$. This will complete the proof since it will imply that $P_{DP}(M,m) \geq m\left(\prod_{i=1}^{n} N(a_i,\HH_i)^{s_i}N(b_i,\HH_i)^{m-s_i}\right)^{1/m} \geq mY_{k_i}^n = P(M,m)$ for each $m \geq 5$.

Notice that when $k_i$ is odd, $N(a_i,\HH_i)^{s_i}N(b_i,\HH_i)^{m-s_i} \geq P(C_{k_i},m-1)^m = Y_{k_i}^m$. So assume $k_i$ is even. We calculate,
\begin{align*}
    N(a_i,\HH_i)^{s_i}N(b_i,\HH_i)^{m-s_i} &\geq (Y_{k_i} + s_i - m)^{s_i}\left(\left(\frac{m^2 - 3m + 3}{m^2 - 3m + 2}\right)Y_{k_i} + s_i - m + 1\right)^{m-s_i}\\
    &= Y_{k_i}^{m}\left(1 + \frac{s_i - m}{Y_{k_i}}\right)^{s_i}\left(1 + \frac{1}{(m-1)(m-2)} + \frac{s_i - m + 1}{Y_{k_i}}\right)^{m-s_i}.
\end{align*}
Now we will show that $$Z_{k_i} = \left(1 + \frac{s_i - m}{Y_{k_i}}\right)^{s_i}\left(1 + \frac{1}{(m-1)(m-2)} + \frac{s_i - m + 1}{Y_{k_i}}\right)^{m-s_i} \geq 1.$$ Clearly, $Z_{k_i} = 1$ when $s_i = m$. Notice $s_i - m + 1 < 0$ for all $0 \leq s_i \leq m-2$. Thus, $Z_{k_i} \geq Z_4$. By Lemma~\ref{lem: cyclestechnical}, for each $0 \leq s_i \leq m-2$,
\begin{align*}
    Z_4 &> \left(1 - \frac{m}{(m-2)^4}\right)^{s_i}\left(1 + \frac{1}{(m-1)(m-2)} - \frac{m}{(m-2)^4}\right)^{m-s_i} \geq 1.
\end{align*}
\end{proof}

Next, Proposition~\ref{pro: m=4} shows the conditions that allow for the DP color function threshold to be $4$.

\begin{pro}\label{pro: m=4}
Suppose $n \geq 2$. Let $M = K_1 \vee G$, where $G$ is the disjoint union of cycles $C_{k_i}$ for $i\in[n]$, with each $k_i \ge 3$, and there exists at most one element $p \in [n]$ such that $k_p = 4$.
Then, $$P_{DP}(M,4) = P(M,4).$$ Consequently, $\tau_{DP}(M) = 4$.
\end{pro}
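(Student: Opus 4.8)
The plan is to re-run the amalgamation/AM--GM machinery from the proof of Theorem~\ref{thm: manycycles}, while repairing the single place where that argument fails at $m=4$: a $4$-cycle. Fix an arbitrary $4$-fold cover $\HH=(L,H)$ of $M$, let $\HH_i=(L_i,H_i)$ be the separated cover of $M_i$ with $u_{i,1}=w_i$, and use the identity bijections $f_i((w_1,j))=(w_i,j)$ so that $\HH$ is the resulting $F$-amalgamated cover. By Lemma~\ref{lem: upperGen}, $P_{DP}(M,\HH)=\sum_{j=1}^4\prod_{i=1}^n N((w_i,j),\HH_i)$, and since $P_{DP}(M,4)\le P(M,4)$ always, it suffices to prove $P_{DP}(M,\HH)\ge P(M,4)=4\prod_{i=1}^n Y_{k_i}$, where $Y_{k_i}=P(M_i,4)/4$. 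Writing $s_i$ for the number of level vertices of $\HH_i$ in $L_i(w_i)$ (so $s_i\in\{0,1,2,4\}$), I would first collect the per-cycle estimates supplied by Lemma~\ref{lem: sethFormula} at $m=4$.

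I would sort the cycles into \emph{pointwise good} and \emph{deficient} ones, where a cycle is pointwise good if $N((w_i,j),\HH_i)\ge Y_{k_i}$ for every $j$. This holds for all odd cycles (as in Theorem~\ref{thm: manycycles}, since $P_{DP}(C_{k_i},3)=P(C_{k_i},3)=Y_{k_i}$) and for every even cycle with $s_i\in\{0,4\}$: when $s_i=0$ all vertices are non-level and when $s_i=4$ all are level, and in both extremes the Lemma~\ref{lem: sethFormula} bound is at least $Y_{k_i}$. The remaining cycles are even with $s_i\in\{1,2\}$. For such a cycle of length $k_i\ge 6$ the term-product carries strict slack, $\prod_{j}N((w_i,j),\HH_i)\ge Z_{k_i}Y_{k_i}^4$ with $Z_{k_i}>1.2$; this is the $m=4$ analogue of Lemma~\ref{lem: cyclestechnical} and must be checked anew, since that lemma is stated only for $m\ge5$ (indeed $Z_4<1$, which is exactly the obstruction). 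For the lone possible $4$-cycle $M_p$, Lemma~\ref{lem: sethFormula} gives $N((w_p,j),\HH_p)\ge 14+s_p$ at a level vertex and $\ge 18+s_p$ otherwise, whence $\prod_j N((w_p,j),\HH_p)\ge 102400<18^4=Y_4^4$ when $s_p\in\{1,2\}$; crucially, the global total still satisfies $\sum_j N((w_p,j),\HH_p)=P_{DP}(M_p,\HH_p)\ge P_{DP}(M_p,4)=72=4Y_4$ by Theorem~\ref{thm: wheel}.

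Next I would factor all pointwise good cycles out of the sum: if $S$ denotes the set of deficient indices, then
\[
P_{DP}(M,\HH)\ \ge\ \Big(\prod_{i\notin S}Y_{k_i}\Big)\sum_{j=1}^4\prod_{i\in S}N((w_i,j),\HH_i),
\]
so it remains to prove $\sum_{j}\prod_{i\in S}N((w_i,j),\HH_i)\ge 4\prod_{i\in S}Y_{k_i}$, where $S$ consists of even cycles with $s_i\in\{1,2\}$ and, by hypothesis, contains at most one $4$-cycle. If $S$ has no $4$-cycle, every factor obeys $\prod_j N\ge Y_{k_i}^4$ and AM--GM finishes as before. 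If $S=\{p\}$, the reduced sum is exactly $\sum_j N((w_p,j),\HH_p)\ge 72=4Y_4$. In the last case $S=\{p\}\cup T$ with $T\neq\emptyset$, I would combine term-products and spend the slack of $T$ to repay the deficit of $M_p$:
\[
\prod_{j=1}^4\Big(N((w_p,j),\HH_p)\prod_{i\in T}N((w_i,j),\HH_i)\Big)\ \ge\ 102400\cdot\Big(\prod_{i\in T}Z_{k_i}\Big)\prod_{i\in T}Y_{k_i}^4\ \ge\ 18^4\prod_{i\in T}Y_{k_i}^4,
\]
since $\prod_{i\in T}Z_{k_i}>1.2>18^4/102400$; AM--GM applied to $\sum_j(\cdots)$ then delivers $4Y_4\prod_{i\in T}Y_{k_i}=4\prod_{i\in S}Y_{k_i}$.

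Assembling the cases gives $P_{DP}(M,4)\ge P(M,4)$, hence equality, and $\tau_{DP}(M)=4$ then follows by combining with Theorem~\ref{thm: manycycles} (equality for $m\ge5$) and the lower bound $\tau_{DP}(M)\ge4$, which is immediate when $\chi(M)=4$ and otherwise comes from $P_{DP}(M,3)<P(M,3)$ in Proposition~\ref{pro: (M,3)}. I expect the main obstacle to be precisely the $4$-cycle: for $m\ge5$ every cycle's term-product clears $Y_{k_i}^{m}$, but at $m=4$ a $4$-cycle falls short by the ratio $102400/18^4\approx0.976$, so the uniform AM--GM of Theorem~\ref{thm: manycycles} no longer applies directly. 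The real content of the ``at most one $4$-cycle'' hypothesis is that a single such shortfall can always be repaid---either exactly, through $\sum_j N((w_p,j),\HH_p)\ge 4Y_4$ when no long even cycle is present, or with room to spare, through the multiplicative slack $Z_{k_i}>1.2$ furnished by any even cycle of length at least $6$---whereas two $4$-cycles would overdraw this budget, which is exactly why the threshold jumps to $5$ in that case.
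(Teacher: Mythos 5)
Your proposal is correct and takes essentially the same approach as the paper's proof: the same separated/amalgamated-cover setup via Lemma~\ref{lem: upperGen}, the same per-vertex bounds from Lemma~\ref{lem: sethFormula} ($14+s$ and $18+s$ for the $4$-cycle; $2^{k_i}+s_i-2$ and $(7\cdot 2^{k_i-1}+3s_i-2)/3$ for longer even cycles), the same appeal to $P_{DP}(K_1\vee C_4,4)=72$ (Theorem~\ref{thm: wheel}) when the $4$-cycle is the only deficient wheel, and the same AM--GM repayment of the $4$-cycle's deficit by the multiplicative slack of even cycles of length at least $6$. Your reorganization into ``pointwise good'' versus ``deficient'' wheels, with a uniform slack constant $1.2$ (which does hold: the minimum is about $1.247$, attained at $k_i=6$, $s_i=2$, comfortably above the needed $18^4/102400\approx 1.025$), is just a cleaner bookkeeping of the paper's case split into $a=0$ and $a\ge 1$ with its constant $18^4/K_{s_1}$.
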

\begin{proof}
Let $\HH = (L,H)$ be an arbitrary $4$-fold cover of $M$. For each $i \in [n]$, let $\HH_i = (L_i,H_i)$ be the separated cover of $M_i$ obtained from $\HH$ where $u_{i,1} = w_i$. For each $2 \leq i \leq n$, let $f_i: L_1(w_1) \rightarrow L_i(w_i)$ be the function given by $f_i((w_1,j)) = (w_i,j)$ for each $j \in [4]$. Let $F=(f_2,\ldots,f_n)$. Then $\HH = (L,H)$ is the $F$-amalgamated $4$-fold cover of $M$ obtained from $\HH_1,\ldots,\HH_n$. Let $s_i$ be the number of level vertices in $L_i(w_i)$ for each $i \in [l]$. Note $s_i \in \{0,1,2,4\}$. If $L_i(w_i)$ has a level vertex, let $a_i$ be a level vertex of $\HH_i$ in the fewest number of $\HH_i$-colorings for each $i \in [n]$. If $L_i(w_i)$ has a vertex that is not a level vertex of $\HH_i$, let $b_i$ be such a vertex in the fewest number of $\HH_i$-colorings for each $i \in [n]$. If $k_i$ is odd, since $P_{DP}(C_{k_i},3) = P(C_{k_i},3)$, $N(a_i,\HH_i)$ and $N(b_i,\HH_i)$ are both bounded below by $P(C_{k_i},3) = P(M_i,4)/4$. If $k_i$ is even, then by Lemma~\ref{lem: sethFormula}, $N(a_i,\HH_i) \geq 2^{k_i} + s_i - 2$ and $N(b_i,\HH_i) \geq (7(2)^{k_i-1} + 3s_i - 2)/3$ for each $i \in [n]$.

By Lemma~\ref{lem: upperGen}, $P_{DP}(M,\HH) = \sum_{j=1}^{4} \prod_{i=1}^{n} N((w_i,j),\HH_i)$. To complete the proof, we will show that $P_{DP}(M,\HH) \geq P(M,4)$ in each of the following two cases: (1) $k_i \not= 4$ for each $i \in [n]$ and (2) $k_p = 4$ for some $p \in [n]$. For case (1), by the AM-GM Inequality, $P_{DP}(M,\HH) \geq 4\left(\prod_{j=1}^{4} \prod_{i=1}^{n} N((w_i,j),\HH_i)\right)^{1/4} \geq 4\left(\prod_{i=1}^{n} (N(a_i,\HH_i)^{s_i} N(b_i,\HH_i)^{4-s_i})\right)^{1/4}$. We will show that $N(a_i,\HH_i)^{s_i}N(b_i,\HH_i)^{4-s_i} \geq (P(M_i,4)/4)^{4}$ for each $i \in [n]$. Whenever $k_i$ is odd, $N(a_i,\HH_i)^{s_i}N(b_i,\HH_i)^{4-s_i} \geq P(C_{k_i},3)^4 = (P(M_i,4)/4)^4$. When $k_i$ is even, we know $k_i \geq 6$, and we calculate,
\begin{align*}
    N(a_i,\HH_i)^{s_i}N(b_i,\HH_i)^{4-s_i} &\geq (2^{k_i} + s_i - 2)^{s_i}\left(\frac{7(2)^{k_i-1} + 3s_i - 2}{3}\right)^{4-s_i}\\
    &\geq (2^{k_i} + s_i - 2)^{s_i}(2^{k_i} + s_i + 10)^{4-s_i}\\
    &\geq (2^{k_i} + 2)^4 = \left(\frac{P(M_i,4)}{4}\right)^{4}.
\end{align*}
Thus,
\begin{align*}
    P_{DP}(M,\HH) &\geq 4\left(\prod_{i=1}^{n} N(a_i,\HH_i)^{s_i}N(b_i,\HH_i)^{4-s_i}\right)^{1/4}\\
    &\geq \frac{\prod_{i=1}^{n} P(M_i,4)}{4^{n-1}} = P(M,4).
\end{align*}

In case (2), without loss of generality, assume that there are non-negative integers $a$ and $b$ such that $k_1 = 4$, $k_i > 4$ is even and $s_i \in \{0,1,2\}$ for each $2 \leq i \leq a + 1$, $k_i > 4$ is even and $s_i = 4$ for each $a + 2 \leq i \leq a + b + 1$, and $k_i$ is odd for each $a + b + 2 \leq i \leq n$. Note it is possible for $a = 0$, $b = 0$, or $a + b = n - 1$. Also note that when $s_i = 4$ or $k_i$ is odd, $N((w_i,j),\HH_i) \geq P(M_i,4)/4$ for each $j \in [4]$. Let $K_{s_1} = \prod_{j=1}^{4} N((w_1,j),\HH_1) = (14 + s_1)^{s_1}(18 + s_1)^{4-s_1}$. We observe that $18^4/K_{s_1} \geq 1$, and we see that
\begin{align*}
    P_{DP}(M,\HH) = \sum_{j=1}^{4} \prod_{i=1}^{n} N((w_i,j),\HH_i) \geq \left(\prod_{i=a+2}^{n} \frac{P(M_i,4)}{4}\right) \left(\sum_{j=1}^{4} \prod_{i=1}^{a+1} N((w_i,j),\HH_i)\right).
\end{align*}
Note that $\sum_{j=1}^{4} N((w_1,j),\HH_1) = P_{DP}(M_1,\HH_1) \geq P_{DP}(M_1,4) = 72$ (the last equality follows from Theorem~\ref{thm: wheel}). When $a = 0$,
\begin{align*}
    P_{DP}(M,\HH) \geq \left(\prod_{i=2}^{n} \frac{P(M_i,4)}{4}\right) \left(\sum_{j=1}^{4} N((w_1,j),\HH_1)\right) \geq \frac{72\prod_{i=2}^{n} P(M_i,4)}{4^{n-1}} = P(M,4).
\end{align*}
So, assume that $a \geq 1$. We notice
\begin{align*}
    P_{DP}(M,\HH) &\geq \left(\prod_{i=a+2}^{n} \frac{P(M_i,4)}{4}\right) \left(\sum_{j=1}^{4} \prod_{i=1}^{a+1} N((w_i,j),\HH_i)\right)\\
    &\geq 4\left(\prod_{i=a+2}^{n} \frac{P(M_i,4)}{4}\right) \left(\prod_{j=1}^{4} \prod_{i=1}^{a+1} N((w_i,j),\HH_i)^{1/4}\right) \textrm{ (by the AM-GM Inequality)}\\
    &\geq 4K_{s_1}^{1/4}\left(\prod_{i=a+2}^{n} \frac{P(M_i,4)}{4}\right) \left(\prod_{i=2}^{a+1} (N(a_i,\HH_i)^{s_i} N(b_i,\HH_i)^{4-s_i})^{1/4}\right).
\end{align*}
We will show that $N(a_i,\HH_i)^{s_i}N(b_i,\HH_i)^{4-s_i} \geq \left(18(P(M_i,4)/4)\right)^4/K_{s_1}$ for each $2 \leq i \leq a + 1$. Recall that $N(a_i,\HH_i) \geq 2^{k_i} + s_i - 2$, $N(b_i,\HH_i) \geq (7(2)^{k_i-1} + 3s_i - 2)/3$, and $k_i \geq 6$ when $2 \leq i \leq a+1$. We calculate,
\begin{align*}
    N(a_i,\HH_i)^{s_i}N(b_i,\HH_i)^{4-s_i} &\geq (2^{k_i} + s_i - 2)^{s_i}\left(\frac{7(2)^{k_i-1} + 3s_i - 2}{3}\right)^{4-s_i}\\
    &\geq (2^{k_i} + s_i - 2)^{s_i}(2^{k_i} + s_i + 10)^{4-s_i}\\
    &\geq \frac{18^4}{K_{s_1}}(2^{k_i} + 2)^4 = \frac{18^4}{K_{s_1}}\left(\frac{P(M_i,4)}{4}\right)^4.
\end{align*}
Thus,
\begin{align*}
    P_{DP}(M,\HH) &\geq 4K_{s_1}^{1/4}\left(\prod_{i=a+2}^{n} \frac{P(M_i,4)}{4}\right) \left(\prod_{i=2}^{a+1} (N(a_i,\HH_i)^{s_i} N(b_i,\HH_i)^{4-s_i})^{1/4}\right)\\
    &\geq 4K_{s_1}^{1/4}\left(\frac{\prod_{i=a+2}^{n} P(M_i,4)}{4^{n-a-1}}\right) \left(\prod_{i=2}^{a+1} \frac{18P(M_i,4)}{4K_{s_1}^{1/4}}\right)\\
    &= \left(\frac{18}{K_{s_1}^{1/4}}\right)^{a-1}\frac{72\prod_{i=2}^{n} P(M_i,4)}{4^{n-1}} \geq \frac{72\prod_{i=2}^{n} P(M_i,4)}{4^{n-1}} = P(M,4).
\end{align*}
\end{proof}

Proposition~\ref{pro: (C_4,4)} below is the last result needed to prove Theorem~\ref{cor: coneofcycles}.  The following observation, which allows us to think of $\HH$-colorings as vertex labelings, is a useful perspective to have in mind for the proof of Proposition~\ref{pro: (C_4,4)}.

\begin{obs}\label{obsrv: backwardPerspective}
Suppose $G$ is an arbitrary graph and $\HH = (L,H)$ is an arbitrary $m$-fold cover of $G$. Let $f: V(G) \rightarrow [m]$ be a function that satisfies: if $uv \in E(G)$, then $(u,f(u))(v,f(v)) \notin E(H)$. Let $\FF$ be the set of all such $f$, $\II$ be the set of all $\HH$-colorings of $G$, and $B: \FF \rightarrow \II$ be the function defined by $B(f) = \{(u,f(u)) : u \in V(G)\}$. Then $B$ is a bijection.
\end{obs}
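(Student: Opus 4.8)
The plan is to prove directly that $B$ is well-defined (i.e.\ lands in $\II$), injective, and surjective, relying only on the defining properties of a cover together with the standard characterization of $\HH$-colorings as the independent sets $I \subseteq V(H)$ with $|I \cap L(u)| = 1$ for every $u \in V(G)$.

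First I would verify that $B$ maps $\FF$ into $\II$. Given $f \in \FF$, the set $B(f) = \{(u,f(u)) : u \in V(G)\}$ contains exactly one vertex from each block $L(u)$ of the partition $\{L(u) : u \in V(G)\}$, so $|B(f)| = |V(G)|$ and $|B(f) \cap L(u)| = 1$ for every $u$. It then remains to check that $B(f)$ is independent in $H$. Suppose for contradiction that $(u,f(u))(v,f(v)) \in E(H)$ for some distinct $u,v$. Then $E_H(L(u),L(v))$ is nonempty, so property~(3) in the definition of a cover forces $uv \in E(G)$; but then the defining condition of $\FF$ gives $(u,f(u))(v,f(v)) \notin E(H)$, a contradiction. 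Hence $B(f)$ is an independent set of size $|V(G)|$, i.e.\ an $\HH$-coloring.

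Injectivity is immediate: if $B(f) = B(g)$, then for each $u$ the unique vertex of this common set lying in $L(u)$ equals both $(u,f(u))$ and $(u,g(u))$, so $f(u) = g(u)$. For surjectivity I would exhibit the inverse map. Given any $\HH$-coloring $I$, the equality $|I \cap L(u)| = 1$ lets me define $f(u)$ to be the unique index with $(u,f(u)) \in I$. If $uv \in E(G)$, then $(u,f(u))$ and $(v,f(v))$ both lie in the independent set $I$ and are therefore non-adjacent in $H$, so $f \in \FF$; and by construction $B(f) = I$. This shows $B$ is onto, completing the proof that $B$ is a bijection.

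I expect no genuine difficulty here, as the statement is essentially a reformulation of the definition of an $\HH$-coloring. The only step that requires any care is the well-definedness argument, where one must use cover property~(3) to turn an unwanted cross-edge into an edge of $G$ and then contradict the defining condition of $\FF$; everything else is a routine unwinding of the ``exactly one vertex per block'' property.
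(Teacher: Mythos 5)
Your proof is correct. The paper states this result as an Observation with no proof at all---it is treated as immediate from the definition of an $\HH$-coloring (an independent set $I$ in $H$ with $|I \cap L(u)| = 1$ for every $u \in V(G)$)---and your verification (well-definedness via cover property~(3), injectivity and surjectivity via the one-vertex-per-block characterization) is precisely the routine unwinding that justifies the paper's claim.
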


\begin{pro}\label{pro: (C_4,4)}
Let $n \geq 2$. Let $M = K_1 \vee G$, where $G$ is the disjoint union of cycles $C_{k_i}$ for $i\in[n]$, with each $k_i \ge 3$, and there exist distinct $a,b \in [n]$ such that $k_a = k_b = 4$.
Then $$P_{DP}(M,4) < P(M,4).$$ Consequently, $\tau_{DP}(M) \geq 5$.
\end{pro}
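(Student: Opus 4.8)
The plan is to prove $P_{DP}(M,4)<P(M,4)$ by exhibiting a single $4$-fold cover $\HH$ of $M$ whose number of colorings falls strictly below $P(M,4)$, building $\HH$ as an amalgamated cover so that Lemma~\ref{lem: upperGen} computes the count exactly. Writing $M$ as the vertex-gluing of the wheels $M_i=K_1\vee C_{k_i}$ at their universal vertices $w_i$ (identified as $w$) and using the bijections $f_i((w_1,j))=(w_i,j)$, Lemma~\ref{lem: upperGen} gives
\[
P_{DP}(M,\HH)=\sum_{j=1}^{4}\prod_{i=1}^{n}N((w_i,j),\HH_i),
\]
where the $\HH_i$ are $4$-fold covers of the individual wheels that I get to design. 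Since $P(M,4)=4\prod_{i=1}^{n}P(C_{k_i},3)=1296\prod_{i\neq a,b}P(C_{k_i},3)$ and $P(C_4,3)=18$, it suffices to make the two $C_4$-factors contribute a factor $\sum_{j}a_jb_j<1296$, where $a_j=N((w_a,j),\HH_a)$ and $b_j=N((w_b,j),\HH_b)$, while every other factor contributes the constant $P(C_{k_i},3)$.

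For the cones over $C_4$ I would take $\HH_a$ and $\HH_b$ to be full covers built from the canonical labeling on the spanning tree $M_i-E(C_{k_i})$ together with a single nonidentity twist on the remaining cycle edge. Choosing the twist to be a $3$-cycle on the three colors other than a fixed color $c$ makes $(w,c)$ a level vertex whose cone reduction is a twisted full cover of $C_4$; by the analysis underlying Lemma~\ref{lem: sethFormula} (equivalently, by a direct count via Observation~\ref{obsrv: backwardPerspective}) this vertex lies in $15$ colorings, while the three non-level vertices each lie in $19$ colorings. Thus each such cover realizes a distribution $\big(N((w,1)),\dots,N((w,4))\big)$ equal to a permutation of $(15,19,19,19)$, summing to $72=P_{DP}(M_i,4)$ as Theorem~\ref{thm: wheel} requires. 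For every $i\neq a,b$ I use a canonical (untwisted full) cover, so that $N((w_i,j),\HH_i)=P(C_{k_i},3)$ for every $j$.

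The crucial remaining choice is the \emph{alignment} of the two non-uniform distributions. If the deficient slot (the one with value $15$) sits in the same coordinate for both $\HH_a$ and $\HH_b$, the product-sum is $15^2+3\cdot 19^2=1308>1296$, which is useless; so I would instead place the deficient slot of $\HH_a$ in coordinate $1$ (twist fixing color $1$) and that of $\HH_b$ in coordinate $2$ (twist fixing color $2$), giving
\[
\sum_{j=1}^{4}a_jb_j = 15\cdot19+19\cdot15+19\cdot19+19\cdot19 = 1292 < 1296.
\]
Consequently $P_{DP}(M,\HH)=1292\prod_{i\neq a,b}P(C_{k_i},3)<1296\prod_{i\neq a,b}P(C_{k_i},3)=P(M,4)$, and since $\prod_{i\neq a,b}P(C_{k_i},3)>0$ this yields $P_{DP}(M,4)\le P_{DP}(M,\HH)<P(M,4)$, hence $\tau_{DP}(M)\ge 5$. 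The main obstacle is exactly this rearrangement point: by Theorem~\ref{thm: wheel} each individual wheel is ``DP-perfect'' at $m=4$ (its colorings sum to precisely $72$), so the deficiency cannot originate in any single factor; it emerges only from correlating two non-uniform distributions \emph{anti-monotonically}. One must verify both that the distribution $(15,19,19,19)$ is genuinely attained by an honest cover and that its small entry is strictly below the uniform value $18$ — only then does anti-alignment push the product-sum below the break-even value $1296$ attained by uniform covers.
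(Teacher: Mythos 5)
Your proposal is correct and takes essentially the same approach as the paper: write $M$ as a vertex-gluing of wheels, put twisted full covers on the two $C_4$-wheels and canonical covers on all the others, anti-align the two twists, and evaluate the resulting amalgamated cover exactly via Lemma~\ref{lem: upperGen}. The only difference is the choice of twist permutation: you use a $3$-cycle, producing the distributions $(15,19,19,19)$ and $(19,15,19,19)$ with product-sum $1292<1296$, while the paper uses a transposition, producing $(16,16,20,20)$ and $(20,20,16,16)$ with product-sum $1280<1296$; both computations are valid and yield the strict inequality $P_{DP}(M,4)<P(M,4)$.
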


\begin{proof}
Without loss of generality, suppose $k_1 = k_2 = 4$. For each $i \in [n]$, suppose that the vertices of the copy of $C_{k_i}$ in $M_i$ in cyclic order are $v_{i,1},\ldots,v_{i,k_i}$. For each $i \in [n]$, we will construct an $m$-fold cover $\HH_i = (L_i,H_i)$ of $M_i$. For each $i \in [n]$, let $L_i(x) = \{(x,j) : j \in [4]\}$ for each $x \in V(M_i)$, and let $V(H_i) = \bigcup_{x \in V(M_i)} L_i(x)$. For each $i \in [n]$, construct edges so that $H_i[L_i(x)]$ is a complete graph for each $x \in V(M_i)$, so that $(w_i,j)(v_{i,k},j) \in E(H_i)$ for each $k \in [k_i]$ and each $j \in [4]$, and so that $H_i$ contains the edges in $\{(v_{i,k},j)(v_{i,k+1},j) : k \in [k_i - 1], j \in [4]\}$.
Construct edges so that for each $j \in [2]$, $(v_{1,1},j)(v_{1,4},j) \in E(H_1)$ and $(v_{2,1},j+2)(v_{2,4},j+2) \in E(H_2)$. Also construct edges so that $(v_{1,1},3)(v_{1,4},4),(v_{1,1},4)(v_{1,4},3) \in E(H_1)$ and $(v_{2,1},1)(v_{2,4},2),(v_{2,1},2)(v_{2,4},1) \in E(H_2)$. For each $i \geq 3$, construct edges so that $(v_{i,1},j)(v_{i,k_i},j) \in E(H_i)$ for each $j \in [4]$.

Let $\II_j$ be the set of $\HH_1$-colorings that contain $(w_1,j)$ for each $j \in [4]$. Notice $N((w_1,j),\HH_1) = |\II_j|$ for each $j \in [4]$. Also notice that $|\II_1| = |\II_2|$ and $|\II_3| = |\II_4|$. Let $\HH_1^{(j)}$ be the cone reduction of $\HH_1$ by $(w_1,j)$. We can determine $\II_j$ for some $j \in [4]$ by determining the number of independent sets of size $4$ of $H_1^{(j)}$. Notice this can be done by counting the number of proper $3$-colorings of a $P_4$ and subtracting the number of those colorings that color the end vertices $c_1,c_2 \in [j]$ such that $(v_{1,1},c_1)(v_{1,4},c_2) \in E(H_1^{(j)})$. When $c_1 = c_2$, the number of colorings that must be subtracted is equal to the number of proper $3$-colorings of a $C_3$ that color one of the vertices $c_1 \in [j]$ which is $P(C_3,3)/3$. When $c_1 \not= c_2$, the number of colorings that must be subtracted is equal to the number of proper $3$-colorings of a $C_4$ that color one of the vertices $c_1 \in [4]$ and an adjacent vertex $c_2 \in [j] - \{c_1\}$ which is $P(C_4,3)/6$.
By construction, $$|\II_1| = |\II_2| = P(P_4,3) - \frac{P(C_3,3)}{3} - \frac{2P(C_4,3)}{6} = 3(2)^3 - \frac{\left(2^3-2\right)}{3} - \frac{2\left(2^3+1\right)}{3} = 16.$$
By construction, $$|\II_3| = |\II_4| = P(P_4,3) - \frac{2P(C_3,3)}{3} = 3(2)^3 - \frac{2(2^3-2)}{3} = 20.$$
Thus, $N((w_1,1),\HH_1) = N((w_1,2),\HH_1) = 16$ and $N((w_1,3),\HH_1) = N((w_1,4),\HH_1) = 20$, and by a similar argument, $N((w_2,1),\HH_2) = N((w_2,2),\HH_2) = 20$ and $N((w_2,3),\HH_2) = N((w_2,4),\HH_2) = 16$. Since each $\HH_i^{(j)}$ has a canonical labeling when $3 \leq i \leq n$, $N((w_i,j),\HH_i) = P(C_{k_i},3) = P(M_i,4)/4$ for each $3 \leq i \leq n$ and $j \in [4]$. 

For each $i \in [n-1]$, let $f_{i+1}: L_1(w_1) \rightarrow L_{i+1}(w_{i+1})$ be the function given by $f_{i+1}((w_1,j)) = (w_{i+1},j)$ for each $j \in [4]$.  Let $\HH = (L,H)$ be the $F$-amalgamated $m$-fold cover of $M$ obtained from $\HH_1,\ldots,\HH_n$ where $F = (f_2,\ldots,f_n)$. Notice $N((w_1,j),\HH_1)N((w_2,j),\HH_2) = 320$ for each $j \in [4]$. Also notice $N((w_i,j_1),\HH_i) = N((w_i,j_2),\HH_i)$ for each $j_1,j_2 \in [4]$ and $3 \leq i \leq n$. Thus, by Lemma~\ref{lem: upperGen},
\begin{align*}
    P_{DP}(M,\HH) = \sum_{j=1}^{4} \prod_{i=1}^{n} N((w_i,j),\HH_i) &= 320\sum_{j=1}^{4} \prod_{i=3}^{n} N((w_i,j),\HH_i)\\
    &= 1280\prod_{i=3}^{n} \frac{P_{DP}(M_i,4)}{4}\\
    &< 1296\prod_{i=3}^{n} \frac{P(M_i,4)}{4} = P(M,4).
\end{align*}
\end{proof}

Putting all these results together gives us Theorem~\ref{cor: coneofcycles}.

\begin{proof}[Proof of Theorem~\ref{cor: coneofcycles}]
By Proposition~\ref{pro: (M,3)}, $\tau_{DP}(M) \geq 4$, and by Theorem~\ref{thm: manycycles}, $\tau_{DP}(M) \leq 5$. Assume that there is at most one element $p \in [n]$ such that $k_p = 4$. Then by Proposition~\ref{pro: (C_4,4)}, $\tau_{DP}(M) \leq 4$. Now assume that there are at least two elements $a,b \in [n]$ such that $k_a = k_b = 4$. Then by Proposition~\ref{pro: m=4}, $\tau_{DP}(M) \geq 5$.
\end{proof}

{\bf Acknowledgment.}  This paper is a combination of research projects conducted with undergraduate students: Jack Becker, Jade Hewitt, Michael Maxfield, David Spivey, Seth Thomason, and Tim Wagstrom at the College of Lake County during the summer and fall of 2020. The support of the College of Lake County is gratefully acknowledged.

\end{document}